\numberwithin{equation}{section}
\theoremstyle{plain}
\newtheorem{theorem}{Theorem}[section]
\newtheorem{lemma}[theorem]{Lemma}
\newtheorem{corollary}[theorem]{Corollary}
\newtheorem{assumption}[theorem]{Assumption}
\theoremstyle{definition}
\newtheorem{definition}[theorem]{Definition}
\newtheorem{remark}[theorem]{Remark}
\newcommand{\N}{\mathbb{N}}
\newcommand{\Z}{\mathbb{Z}}
\newcommand{\R}{\mathbb{R}}
\newcommand{\T}{\mathbb{T}}
\newcommand{\CC}{\mathbb{C}}
\newcommand{\ee}{\mathrm{e}}
\newcommand{\ii}{\mathrm{i}}
\newcommand{\dd}{\mathrm{d}}
\newcommand{\CalA}{\mathcal{A}}
\newcommand{\CalB}{\mathcal{B}}
\newcommand{\CalF}{\mathcal{F}}
\newcommand{\CalK}{\mathcal{K}}
\newcommand{\CalP}{\mathcal{P}}
\newcommand{\CalT}{\mathcal{T}}
\newcommand{\bfun}{L^\infty}
\newcommand{\gp}{p}
\newcommand{\eps}{\varepsilon}
\newcommand{\FirstN}[1]{\underline{#1}}
\newcommand{\fall}{\:\forall\:}
\newcommand{\abs}[1]{\left\lvert#1\right\rvert}
\newcommand{\mnorm}[1]{\left\lVert#1\right\rVert}
\newcommand{\setn}[1]{\left\{#1\right\}}
\newcommand{\setcond}[2]{\left\{#1 \,:\, #2\right\}}
\newcommand{\defeq}{\mathrel{\mathop:}=}
\newcommand{\lr}[1]{\left(#1\right)}
\newcommand{\skpr}[2]{\left\langle#1,#2\right\rangle}
\newcommand{\norel}{\mathrel{\phantom{=}}}
\newcommand{\card}[1]{\left\lvert#1\right\rvert}
\newcommand{\ceil}[1]{\left\lceil#1\right\rceil}
\newcommand{\floor}[1]{\left\lfloor#1\right\rfloor}
\DeclareMathOperator{\linspan}{span}
\DeclareMathOperator{\codim}{codim}
\DeclareMathOperator{\rank}{rank}
\DeclareMathOperator{\co}{co}
\DeclareMathOperator{\id}{id}
\newcommand{\Log}[1]{\log(#1+1)}
\newcommand{\pmone}{[-1,1]}
\newcommand{\footremember}[2]{%
    \footnote{#2}
    \newcounter{#1}
    \setcounter{#1}{\value{footnote}}%
}
\newcommand{\footrecall}[1]{%
    \footnotemark[\value{#1}]%
}
\newcommand{\subjclass}[2][2020]{%
  \let\@oldtitle\@title%
  \gdef\@title{\@oldtitle\footnotetext{#1 \emph{Mathematics subject classification.} #2}}%
}
\newcommand{\keywords}[1]{%
  \let\@@oldtitle\@title%
  \gdef\@title{\@@oldtitle\footnotetext{\emph{Key words and phrases.} #1.}}%
}
\begin{document}

\title{Sampling numbers of smoothness classes via $\ell^1$-minimization}
\author{Thomas Jahn\footremember{tj}{
Mathematical Institute for Machine Learning and Data Science (MIDS), Catholic University of Eichstätt--Ingolstadt (KU), Auf der Schanz 49, 85049 Ingolstadt, Germany.
Email: \href{mailto:thomas.jahn@ku.de}{\texttt{thomas.jahn@ku.de}}, \href{mailto:felix.voigtlaender@ku.de}{\texttt{felix.voigtlaender@ku.de}}
} \and Tino Ullrich\footremember{tu}{
Technische Universität Chemnitz, 09107 Chemnitz, Germany.
Email: \href{mailto:tino.ullrich@mathematik.tu-chemnitz.de}{\texttt{tino.ullrich@mathematik.tu-chemnitz.de}}
} \and Felix Voigtlaender\footrecall{tj}}

\keywords{information based complexity; rate of convergence; sampling; sampling numbers; smoothness class}
\subjclass{94A20, 41A46, 46E15, 42B35, 41A25, 65D15, 41A63}

\maketitle

\begin{abstract}
Using techniques developed recently in the field of compressed sensing
we prove new upper bounds for general (nonlinear) sampling numbers of (quasi-)Banach smoothness spaces in $L^2$.
In particular, we show that in relevant cases such as mixed and isotropic weighted Wiener classes or Sobolev spaces with mixed smoothness, sampling numbers in $L^2$ can be upper bounded by best $n$-term trigonometric widths in $\bfun$.
We describe a recovery procedure from $m$ function values based on $\ell^1$-minimization (basis pursuit denoising).
With this method, a significant gain in the rate of convergence compared to recently developed linear recovery methods is achieved.
In this deterministic worst-case setting we see an additional speed-up of $m^{-1/2}$ (up to log factors) compared to linear methods in case of weighted Wiener spaces.
For their quasi-Banach counterparts even arbitrary polynomial speed-up is possible.
Surprisingly, our approach allows to recover mixed smoothness Sobolev functions belonging to $S^r_pW(\T^d)$ on the $d$-torus with a logarithmically better rate of convergence than any linear method can achieve
when $1 < p < 2$ and $d$ is large.
This effect is not present for isotropic Sobolev spaces. 
\end{abstract}

\section{Introduction}%
\label{sec:Introduction}

In this paper we study the recovery problem for multivariate functions
belonging to a given smoothness class $\CalF$ using only $m$ function values.
This topic gained a lot of interest in recent years and yet several important questions remain open.
In a series of papers \cite{BSU22,DKU22,KUV19,KriegUl2021a,KriegUl2021b, NagelScUl2022,Temlyakov2021,TemlyakovUl2021,TemlyakovUl2022} several
authors made significant progress towards \emph{linear} sampling recovery,
where the recovery operator is supposed to be linear.
Surprisingly, in most of the relevant cases it turned out that linear sampling recovery
(quantified by the sampling numbers) is as powerful as general linear approximation
(quantified by the approximation numbers, see \Cref{sect:widths} below). 

However, the picture may change drastically if we allow for \emph{nonlinear} sampling recovery,
where we still use $m$ function values but allow the recovery operator to be nonlinear.
It is well-known, see \cite[Propositions~13 and~14]{CreutzigWo2004}
and \cite[Theorem~4.8]{NovakWo2008} and the references therein, that we have to focus on examples
where neither the smoothness class $\CalF$ is a Hilbert space nor the target space in which the recovery error is measured is given by $X=\bfun$ to get a possible gain in the convergence rate.
Nevertheless, our proposed method makes also sense in the mentioned settings since existing approaches
are often far from being implementable or constructive \cite{DKU22,NagelScUl2022} due to the use of a non-constructive subsampling strategy coming from the Kadison--Singer problem or simply from the fact that optimal subspaces for linear approximation in $\bfun$ are only known to exist \cite{Temlyakov2021}. 

Here we study the following quantities which relate to the worst-case setting in information-based complexity using \emph{standard information}, see \cite[Sections~4.1.1 and~4.1.4]{NovakWo2008}.
We define the (general nonlinear) sampling numbers
\begin{equation*}
  \varrho_m (\CalF)_X
  \defeq \inf_{t_1,\ldots,t_m \in \Omega}\,
       \inf_{R : \CC^m \to X}\,
         \sup_{\mnorm{f}_{\CalF} \leq 1}\,
           \mnorm{f - R(f(t_1),\ldots,f(t_m))}_X
\end{equation*}
for a quasi-normed space $\CalF$ of functions $\Omega\to\CC$
which is continuously embedded into the Banach space $\bfun(\Omega)$ of bounded (measurable) functions.
(Note that we use the same symbol as for essentially bounded functions here.)
As a concrete recovery method we apply a convex optimization method, namely $\ell^1$-minimization, which is a popular method in statistics \cite{Wainwright2019},
statistical learning, and in the theory of compressed sensing \cite{RauhutCompressiveSensingBook}.
The idea to apply this method to function recovery problems is not new
and has been already successfully applied in \cite{AdcockCaDeMo2022,RauhutSc2017,RauhutWard2012,RauhutWa2016,FV22}.
The general recovery method is rather simple and uses two main ingredients from compressed sensing:
(a) recent RIP results for matrices coming from bounded orthonormal systems \cite{Bourgain2014,HavivRe2017} and
(b) guarantees for $\ell^1$-minimization with noise \cite{RauhutCompressiveSensingBook}.
Crucial for our analysis is the parameter choice in the optimization program
called \emph{basis pursuit denoising}, which ensures that we recover a near-optimal $n$-term approximation.
This technique has been applied recently to the well-known Barron class by the third-named author, see \cite{FV22}. 

In this paper we give an analysis of this basis pursuit denoising method
in a much more general context and relate the $L^2$ recovery error
mainly to the best $n$-term approximation error in $\bfun(\T^d)$.
A specific instance of our general result reads as follows:
There are universal constants $C, \widetilde{C}>0$ such that for any $d \in \N$, and any quasi-normed space $\CalF \hookrightarrow \bfun(\T^d)$,
and arbitrary $n,M \in \N$ with $M \geq 3$, we have
\begin{equation}\label{eq:intro1}
  \begin{split}
     &\varrho_{\ceil{Cd \Log{d} n \Log{n}^2\log(M)}} (\CalF)_{L^2(\T^d)}\\ 
     &~~~~\leq \widetilde{C}
           \lr{
                   \sigma_n(\CalF;\CalT^d)_{\bfun(\T^d)}
                   + E_{[-M,M]^d\cap \Z^d} (\CalF;\CalT^d)_{\bfun(\T^d)}
                 },
	\end{split}
\end{equation}
see \Cref{thm:improvedGeneralFourierBound}.
Here, 
\begin{equation*}
\sigma_n(\CalF;\CalT^d)_{\bfun(\T^d)}=\sup_{\substack{f\in\CalF,\\\mnorm{f}_\CalF\leq 1}}\,\inf_{\substack{J\subset\Z^d,\card{J}\leq n,\\(c_k)_{k\in J}\in\CC^J}}\mnorm{f-\sum_{k\in J}  c_k\ee^{2\uppi\ii\skpr{k}{\bullet}}}_{\bfun(\T^d)}
\end{equation*}
denotes the best $n$-term approximation of $\CalF$ width with respect to the multivariate trigonometric system $\CalT^d$,
whereas the quantity 
\begin{equation*}
E_J (\CalF;\CalT^d)_{\bfun(\T^d)}=\sup_{\substack{f\in\CalF,\\\mnorm{f}_\CalF\leq 1}}\,\inf_{(c_k)_{k\in J}\in\CC^J}\mnorm{f-\sum_{k\in J}c_k\ee^{2\uppi\ii\skpr{k}{\bullet}}}_{\bfun(\T^d)}
\end{equation*}
denotes the best worst-case trigonometric approximation error for $\CalF$ and given frequency set $J \subset\Z^d$,
both measured in $\bfun(\T^d)$.
The asymptotic quantities on the right-hand side have been extensively studied in the last decades for hyperbolic cross spaces, see \cite[Chapters~4 and~7]{DungTeUl2018} and the references therein.
Note that the parameter $M$ determining the box size only enters logarithmically in the index of the left-hand side quantity.
As already mentioned, the result is an instance of a more general framework
involving uniformly bounded dictionaries $\CalB$, see \Cref{thm:MainAbstractResult}
and the subsequent \Cref{Cor:general_sampling}.
In a sense, the relation \eqref{eq:intro1} goes in a similar direction
as Temlyakov's recent observation \cite{Temlyakov2021},
where the linear sampling number is related to the Kolmogorov width in $\bfun(\T^d)$.
In fact, choosing $M$ as an appropriate power of $n$ (depending on the class $\CalF$)
the right-hand side is essentially dominated by the behavior of the best $n$-term width
$\sigma_n(\CalF;\CalT^d)_{\bfun(\T^d)}$.

The advantage of the nonlinear method is already visible for the Sobolev spaces
of mixed smoothness $S^r_pW(\T^d)$ in $L^2(\T^d)$ if $1<p<2$ and $r>\frac{1}{p}$.
In general, we know that the Gelfand numbers for this Sobolev embedding are asymptotically smaller
than the approximation numbers, see \cite{ByrenheidUl2017,Vy08}.
Hence, linear algorithms are not optimal for the worst-case approximation of these classes.
For the sampling numbers we obtain from \eqref{eq:intro1} in case $1<p<\infty$ and $r>\frac{1}{p}$
\begin{equation*}
  \varrho_{\ceil{C_{r,p,d}n\Log{n}^3}}(S^r_pW(\T^d))_{L^2(\T^d)}
  \lesssim\sigma_n(S^r_pW(\T^d);\CalT^d)_{\bfun(\T^d)}.
\end{equation*}
Thanks to Temlyakov \cite[Theorem~2.9]{Temlyakov2015}, see also \cite[Theorem~7.5.2]{DungTeUl2018},
it is known that 
\begin{equation}\label{eq:intro2}
	\sigma_n(S^r_pW(\T^d);\CalT^d)_{\bfun(\T^d)}
	\lesssim \lr{\frac{\Log{n}^{d-1}}{n}}^{r-\frac{1}{p}+\frac{1}{2}}\Log{n}^{\frac{1}{2}-(d-1)(\frac{1}{p}-\frac{1}{2})}
\end{equation}
when $1 < p < 2$ and $\frac{1}{p} < r$.
This together with \Cref{lem:technical} shows that for the approximation numbers $a_n$ and the Kolmogorov numbers $d_n$ we have that $\varrho_n = o(a_n)$ as $n\to\infty$ for sufficiently large dimensions $d$ since 
\begin{equation*}
	a_n(S^r_pW(\T^d))_{L^2(\T^d)}
  = d_n(S^r_pW(\T^d))_{L^2(\T^d)}
  \asymp \lr{\frac{\Log{n}^{d-1}}{n}}^{r-\frac{1}{p}+\frac{1}{2}},
\end{equation*}
see \cite[Theorem~4.5.1]{DungTeUl2018}.
Note that the acceleration happens in the $d$-dependent second $\log$-factor in \eqref{eq:intro2}.
Therefore, in the context of Sobolev spaces with mixed smoothness linear and nonlinear sampling numbers might show different orders of decay.
This phenomenon is not present for usual isotropic Sobolev spaces as shown by Heinrich \cite{Heinrich2009} which confirms the conjecture in \cite[Open Problem~18]{NovakWo2008}.

Another source of examples is provided by the weighted Wiener-type spaces $\CalA^r$ and their mixed counterparts $\CalA^r_{\mathrm{mix}}$,
which gained interest in recent years, see \cite{KaPoVo15,KLT21}.
The classical trigonometric spaces on the $d$-torus are defined as 
\begin{equation*}
	\CalA^r_{\mathrm{mix}}(\T^d)
  \defeq \setcond{f:\T^d\to\CC}
                 {\sum_{k\in\Z^d}\abs{\widehat{f}(k)}\prod_{j=1}^d(1+\abs{k_j})^r <\infty}.
\end{equation*}
As a direct consequence of the general bound \eqref{eq:intro1} together with known behavior
of the best $n$-term widths we obtain
\begin{equation*}
  \varrho_{\ceil{C_{r,d}n\Log{n}^3}}(\CalA^r_{\mathrm{mix}}(\T^d))_{L^2(\T^d)}
  \lesssim n^{-r-\frac{1}{2}}\Log{n}^{(d-1)r+\frac{1}{2}}.
\end{equation*}
Note that the Kolmogorov numbers (approximation numbers) in this situation lead to a main rate of $n^{-r}$,
see \cite{NguyenNgSi2022}, such that our nonlinear recovery operator shows
an acceleration of $n^{-\frac{1}{2}}$ in the main rate.\footnote{By \emph{main rate}, we mean the polynomial decay, ignoring logarithmic factors.}
This is already best possible, since the acceleration of the nonlinear samping numbers compared to their linear counterparts cannot exceed $n^{-\frac{1}{2}}$, as implied by the results in \cite{CreutzigWo2004}, see also \cite[Section~4.2]{NovakWo2008}. 

Spaces of this type can also be built upon systems of orthogonal polynomials.
Here we additionally study the quasi-Banach setting which leads to interesting observations and speed-up beyond $n^{-\frac{1}{2}}$ as the results below show.
Such spaces for univariate Legendre polynomials have been studied in \cite{RauhutWard2012}.
Here we prove a result for spaces
\begin{equation*}
\CalA^r_{\alpha,p}(\pmone)
 \defeq \setcond{\sum_{n\in\N_0} \beta_n \gp^\alpha_n }{\sum_{n\in \N_0} \abs{\beta_n(1+n)^r}^p<\infty},
 \end{equation*}
where $0<p\leq 1$, $r>0$, $(\gp^\alpha_n)_{n\in\N_0}$ is the system of Chebyshev polynomials of the first kind ($\alpha=-\frac{1}{2}$) or the system of Legendre polynomials ($\alpha=0$), and $\mu_\alpha$ is the probability measure on $\pmone$ with respect to which the polynomials $(\gp_n)_{n\in\N_0}$ are orthonormal.
The parameter restriction $r>0$ ensures that $\CalA^r_{-\frac{1}{2},p}(\pmone)$ is embedded into $C(\pmone)$. 
For Legendre expansions, this is only true if $r>\frac{1}{2}$.
However, for all $r>0$ we have that $x\mapsto (1-x^2)^{\frac{1}{4}}f(x)$ is a continuous and bounded function on $\pmone$ and thus admits evaluation.
In fact, the authors in \cite{RauhutWard2012} managed to overcome the issue that the $L^2$-normalized Legendre polynomials are not uniformly $\bfun$-bounded.  
Using a change of measure technique allows for a version of the basis pursuit denoising method also for Legendre expansions.
Combined with the analysis in this paper, we observe
\begin{equation*}
	\varrho_{\ceil{C_{r,p} n\Log{n}^4}}(\CalA^r_{\alpha,p}(\pmone))_{L^2(\mu_{\alpha})}
  \leq \widetilde{C}_{r,p}n^{-R(r,p,\alpha)}.
\end{equation*}
where $R(r,p,-\frac{1}{2}) = r + \frac{1}{p}-\frac{1}{2}$ in case of Chebyshev polynomials
and\linebreak $R(r,p,0)=r +\frac{1}{p}-1$ in case of Legendre polynomials, with constants $C_{r,p}>0$ and $\widetilde{C}_{r,p}>0$.
Here we see that we have a speed-up of $n^{-\frac{1}{p}+\frac{1}{2}}$ and $n^{-\frac{1}{p}+1}$ compared to the approximation numbers, respectively. This shows that for quasi-Banach spaces we are not limited to an acceleration of order $n^{-\frac{1}{2}}$, see \Cref{thm:convex} and \Cref{thm:approximation-chebyshev-wiener}.\\\newline
\textbf{Notation.} As usual, the symbol $\N\defeq \setn{1,2,3,\ldots}$ denotes the natural numbers, $\N_0\defeq\N\cup\setn{0}$, $\Z$ denotes the integers, $\R$ the real numbers, and $\CC$ the complex numbers.
The set $\setcond{k\in\N}{1\leq k\leq n}$ shall be abbreviated by $\FirstN{n}$.
We write $\log$ for the natural logarithm.
The symbol $\CC^{m\times n}$ denotes the set of all $m\times n$-matrices with complex entries.
Quasi-normed spaces $(X,\mnorm{\bullet}_X)$ consist of a vector space $X$ and a function $\mnorm{\bullet}_X:X\to\R$ that takes only non-negative values, vanishes only at $0\in X$, is absolutely homogeneous, and satisfies the quasi-triangle inequality $\mnorm{x+y}_X\leq C(\mnorm{x}_X+\mnorm{y}_X)$ for all $x,y\in X$ for some absolute constant $C>0$.
The closed unit ball in such a space will be denoted by $B_X$.
If $(X,\mnorm{\bullet}_X)$ is even a Hilbert space, the inner product in $X$ is denoted by $\skpr{\bullet}{\bullet}_X$.
Particular instances of quasi-normed spaces are the spaces $\ell^p(A)\defeq L^p(A,\#)$ of $p$-integrable functions with respect to the counting measure $\#$ on a finite or countably infinite set $A$.
If $A=\setn{1,\ldots,m}$, we write $\ell^p(m)$ for $\ell^p(A)\cong \CC^m$, and denote the inner product in $\ell^2(m)$ by $\skpr{\bullet}{\bullet}$ without any indices.
For two sequences $(a_n)_{n\in\N}$ and $(b_n)_{n\in\N}$ of non-negative real numbers, we write $a_n\lesssim b_n$ if there exists a constant $C>0$ such that $a_n\leq Cb_n$.
We write $a_n\gtrsim b_n$ if $b_n\lesssim a_n$, and $a_n\asymp b_n$ if both $a_n\lesssim b_n$ and $b_n\lesssim a_n$.
In case $\lim_{n\to \infty} a_n/b_n = 0$ we sometimes write $a_n = o(b_n)$.
Even though the implied constants do not depend on $n$, they may depend on other parameters.
In our cases, these could be the dimension $d$ of domain, the smoothness parameter $r$, or the integrability parameter $p$.
Sometimes, we will spell out the dependence of certain parameters by indexing the constants, i.e., $C_p$ will be a positive number that may depend on $p$ but not on $n$.
The convex hull $\co(A)$ of a subset $A$ of a real vector space is the set $\setcond{\sum_{j=1}^n\lambda_j x_j}{n\in\N,\lambda_j\in\R,\lambda_j\geq 0,\sum_{j=1}^n\lambda_j=1,x_j\in A}$.

\section{A general recovery result}%
\label{sec:AbstractResult}

\subsection{Best \texorpdfstring{$n$}{n}-term approximation}
\label{sec:best-n-term}
\begin{assumption}\label{assu:StandingAssumptions}
  Let $(\Omega,\mu)$ be a probability space and let $\CalB = (b_j)_{j \in I} \subset L^\infty(\mu)$
  be a \emph{bounded orthonormal system}, i.e., $\CalB$ is orthonormal in $L^2(\mu)$,
  and there exists some $K = K(\CalB) > 0$ such that $\mnorm{b_j}_{\bfun(\Omega)} \leq K$ for all $j \in I$.
  The family $\CalB$ will also be referred to as a \emph{dictionary}.
  Here, we denote by $\bfun(\Omega)$ the space of all bounded measurable functions
  $f:\Omega \to \CC$ (i.e., we do \emph{not} identify functions agreeing $\mu$-almost everywhere)
  with norm $\mnorm{f}_{\bfun(\Omega)} \defeq \sup_{x \in \Omega} \abs{f(x)}$.
\end{assumption}

For $n \in \N$, we define the set of linear combinations of $n$ elements of $\CalB$ as
\begin{equation*}
\Sigma_n\defeq  \Sigma_n(\CalB)\defeq\setcond{\sum_{j \in J}c_jb_j}{J \subset I, \card{J} \leq n, (c_j)_{j \in J} \in \CC^J}.
\end{equation*}
Furthermore, given $J \subset I$, we denote the linear span of $(b_j)_{j \in J}$ by
\begin{equation*}
  V_J \defeq V_J(\CalB)\defeq \linspan \setcond{b_j}{j \in J}.
\end{equation*}
In this paper, $\mu$ is assumed to be a probability (and therefore finite) measure, which implies that
the usual equivalence class $[f]_{\mu}$ under identification of functions agreeing $\mu$-almost everywhere
belongs to $L^2(\mu)$ for all $f\in\bfun(\Omega)$.
This gives an embedding (i.e., a continuous map) $\bfun(\Omega) \to L^2(\mu)$
which allows to view $\bfun(\Omega)$ as a subset of $L^2(\mu)$ up to identification of functions that agree $\mu$-almost everywhere.
We write $\bfun(\Omega)\hookrightarrow L^2(\mu)$ for this situation.

Let $X$ be a normed space of functions on $\Omega$ (e.g., $X = \bfun(\Omega)$ or $X = L^2(\mu)$) and $f\in X$.
We denote by 
\begin{equation*}
  \sigma_n (f; \CalB)_X
  \defeq \inf_{g \in \Sigma_n}\mnorm{f - g}_X
\end{equation*}
and for fixed $J \subset I$, 
\begin{equation*}
  E_J(f;\CalB)_X
  \defeq \inf_{g \in V_J}\mnorm{f - g}_X
\end{equation*}
the corresponding \emph{best approximation errors} from the nonlinear set $\Sigma_n$
and the linear space $V_J$, respectively. 

For a non-empty quasi-normed space $(\CalF,\mnorm{\bullet}_\CalF)$ of functions on $\Omega$
which embeds into $X$, we denote the corresponding \emph{worst-case errors} by 
\begin{equation*}
  \sigma_n (\CalF; \CalB)_X
  \defeq\sup_{f\in B_\CalF} \sigma_n (f; \CalB)_X
\end{equation*}
and 
\begin{equation*}
  E_J(\CalF;\CalB)_X
  \defeq \sup_{f\in B_\CalF}E_J(f;\CalB)_X.
\end{equation*}

Note that the approximation performed in the definition of $\sigma_n(f;\CalB)_X$ is considered \emph{nonlinear} since the set $\Sigma_n$ is not a vector space.
However, we still have a scaling property in the sense that for arbitrary $f\in \CalF$ (not necessarily from the unit ball)
\begin{equation}\label{eq:scaling}
	\sigma_n(f;\CalB)_X \leq \sigma_n (\CalF; \CalB)_X\mnorm{f}_{\CalF}.
\end{equation}

The following useful relation follows from a straightforward computation but does not appear in the literature in this form at least to the authors' knowledge.
Therefore we explicitly state it here.
We consider the following embedding situation
\tikzcdset{arrow style=tikz, diagrams={>=stealth}}
\begin{equation*}
\begin{tikzcd}
\CalF \ar[hookrightarrow]{rr}{} \ar[hookrightarrow]{rd}{} && X \\
& \mathcal{H} \ar[swap,hookrightarrow]{ru}{} &              
\end{tikzcd}
\end{equation*}
for quasi-normed spaces $\CalF$, $\mathcal{H}$ and the target space $X$, all containing the dictionary $\CalB$.
\begin{lemma}\label{lem:factor}
Let $\CalF$, $\mathcal{H}$ and $X$ as above.
Then it holds for $n_1,n_2 \in \N$
\begin{equation*}
	\sigma_{n_1+n_2}(\CalF;\CalB)_X \leq \sigma_{n_1}(\CalF;\CalB)_{\mathcal{H}}\cdot \sigma_{n_2}(\mathcal{H};\CalB)_X.
\end{equation*}
\end{lemma}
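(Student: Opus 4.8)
The plan is a two-step ``factorization'' argument: first approximate $f$ in the intermediate space $\mathcal{H}$ by a linear combination of $n_1$ dictionary elements, then approximate the resulting remainder in $X$ by a linear combination of $n_2$ further dictionary elements, and finally merge the two finite linear combinations into one supported on at most $n_1 + n_2$ indices. Fix $f \in B_\CalF$ and $\eps > 0$; we may assume that both factors on the right-hand side are finite, since otherwise there is nothing to prove. Because $\CalF \hookrightarrow \mathcal{H}$ and $\mnorm{f}_\CalF \le 1$, we have $\sigma_{n_1}(f;\CalB)_{\mathcal{H}} \le \sigma_{n_1}(\CalF;\CalB)_{\mathcal{H}}$, so we may pick $g = \sum_{j \in J_1} c_j b_j$ with $J_1 \subset I$, $\card{J_1} \le n_1$, and $\mnorm{f - g}_{\mathcal{H}} \le \sigma_{n_1}(\CalF;\CalB)_{\mathcal{H}} + \eps$.

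Next, I would set $h \defeq f - g$ and observe that $h \in \mathcal{H}$: indeed $f \in \CalF \hookrightarrow \mathcal{H}$ and each $b_j \in \mathcal{H}$ by assumption, so $g \in \Sigma_{n_1} \subset \mathcal{H}$ as well. Now the scaling property \eqref{eq:scaling}, which holds verbatim with $\mathcal{H}$ in place of $\CalF$, gives $\sigma_{n_2}(h;\CalB)_X \le \sigma_{n_2}(\mathcal{H};\CalB)_X \cdot \mnorm{h}_{\mathcal{H}}$. Hence I can choose $\widetilde g = \sum_{j \in J_2} d_j b_j$ with $J_2 \subset I$, $\card{J_2} \le n_2$, and $\mnorm{h - \widetilde g}_X \le \sigma_{n_2}(\mathcal{H};\CalB)_X \cdot \mnorm{h}_{\mathcal{H}} + \eps$.

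Finally, $g + \widetilde g \in \Sigma_{n_1 + n_2}$, since it is supported on $J_1 \cup J_2$ with $\card{J_1 \cup J_2} \le n_1 + n_2$, and therefore
\[
  \sigma_{n_1+n_2}(f;\CalB)_X \le \mnorm{f - (g+\widetilde g)}_X = \mnorm{h - \widetilde g}_X \le \sigma_{n_2}(\mathcal{H};\CalB)_X \cdot \bigl(\sigma_{n_1}(\CalF;\CalB)_{\mathcal{H}} + \eps\bigr) + \eps .
\]
Letting $\eps \downarrow 0$ and then taking the supremum over $f \in B_\CalF$ yields the claim. The argument is essentially bookkeeping; the only points requiring (minor) care are that the remainder $h = f - g$ genuinely lies in $\mathcal{H}$, so that $\sigma_{n_2}(\,\cdot\,;\CalB)_{\mathcal{H}}$ may legitimately be applied to it, and that the relevant infima are treated via $\eps$-approximation rather than assumed to be attained. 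I do not expect any substantial obstacle here.
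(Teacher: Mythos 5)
Your proof is correct and follows essentially the same two-step factorization as the paper: first approximate $f$ in $\mathcal{H}$ by an $n_1$-term sum, then apply the scaling property \eqref{eq:scaling} to approximate the remainder in $X$ by an $n_2$-term sum, and combine. The only cosmetic difference is that you bound $\sigma_{n_1}(f;\CalB)_{\mathcal{H}}$ by the worst-case quantity immediately rather than at the end, which changes nothing.
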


\begin{proof}
Let $\eps>0$ and $f\in B_\CalF$.
There exists an element $g_{n_1}\in \Sigma_{n_1}$ such that 
\begin{equation*}
\mnorm{f-g_{n_1}}_{\mathcal{H}}\leq \sigma_{n_1}(f;\CalB)_{\mathcal{H}}+\eps.
\end{equation*}
Let us approximate $f-g_{n_1}$ in $X$.
By definition and the above scaling property \eqref{eq:scaling} we know that there exists $g_{n_2} \in \Sigma_{n_2}$ such that 
\begin{equation*}
	\mnorm{f-g_{n_1}-g_{n_2}}_X \leq \sigma_{n_2}(\mathcal{H};\CalB)_X\mnorm{f-g_{n_1}}_{\mathcal{H}}+\eps.
\end{equation*}
Plugging in the bound for $\mnorm{f-g_{n_1}}_{\mathcal{H}}$ yields
\begin{equation*}
 \mnorm{f-g_{n_1}-g_{n_2}}_X \leq (\sigma_{n_1}(f;\CalB)_{\mathcal{H}}+\eps)\cdot (\sigma_{n_2}(\mathcal{H};\CalB)_X+\eps).
\end{equation*}
Since $g_{n_1}+g_{n_2} \in \Sigma_{n_1+n_2}$ we obtain 
\begin{equation*}
 \sigma_{n_1+n_2}(f;\CalB)_X \leq (\sigma_{n_1}(f;\CalB)_{\mathcal{H}}+\eps)\cdot (\sigma_{n_2}(\mathcal{H};\CalB)_X+\eps).
\end{equation*}
Finally, take the limit $\eps\downarrow 0$ and the supremum over $f$ with $\mnorm{f}_{\CalF} \leq 1$. 
\end{proof}

In this paper we aim to relate sampling recovery errors to the above defined quantities $\sigma_n$ and $E_J$.
Given $m \in \N$, we denote the \emph{optimal worst-case nonlinear sampling recovery error}
with respect to $\CalF\hookrightarrow X$ using $m$ point samples by
\begin{equation*}
  \varrho_m (\CalF)_X
  \defeq \inf_{t_1,\ldots,t_m \in \Omega}\,
       \inf_{R : \CC^m \to X}\,
         \sup_{f\in B_\CalF}\,
           \mnorm{f - R(f(t_1),\ldots,f(t_m))}_X,
\end{equation*}
see \cite{Dung1991}.
If we additionally assume the linearity of the reconstruction map $R:\CC^m \to X$ we speak of \emph{linear sampling numbers} denoted by $\varrho^{\mathrm{lin}}_m (\CalF)_X$.
The linear sampling numbers for $X=L^2$ are quite well understood, see \cite{DKU22,NagelScUl2022}.
Note that in the definition of the sampling numbers the dictionary $\CalB$ does not play a role.

The following notion of quasi-projections will be crucial for our results.
Examples for such operators are given by certain de la Vallée Poussin operators, see \Cref{sec:FourierBasis}.
\begin{definition}\label{def:QuasiProjection}
  A \emph{$(\kappa,n,J,J^\ast,\tau)$ quasi-projection} (where $\kappa, n \in \N$, $J, J^\ast \subset I$, and $\tau>0$) is a linear operator $P : \bfun(\Omega) \to \bfun(\Omega)$
  which satisfies the following conditions:
  \begin{enumerate}[label={(\roman*)},leftmargin=*,align=left,noitemsep]
    \item{$P(\Sigma_n) \subset \Sigma_{\kappa n}$},
    \item{$P f = f$ for all $f \in V_J$,\label{fixpoint}}
    \item{$P f \in V_{J^\ast}$ for all $f \in \bfun(\Omega)$,\label{range}}
    \item{$P : \bfun(\Omega) \to \bfun(\Omega)$ is well-defined and bounded,
          with operator norm at most $\tau$, i.e.,
          \begin{equation*}
           \mnorm{P}_{\bfun(\Omega) \to \bfun(\Omega)} \leq \tau.
          \end{equation*}
          \label{norm}}
  \end{enumerate}
\end{definition}
\begin{remark}
  Note that a linear operator $P$ which satisfies the conditions \ref{fixpoint} and \ref{norm}
  in \Cref{def:QuasiProjection} always gives
  \begin{equation*}
    \mnorm{f-Pf}_{\bfun(\Omega)}
    \leq (1+\tau)E_J(f;\CalB)_{\bfun(\Omega)}.
  \end{equation*}
  This allows us to replace the quantity $(1+\tau)E_J(\CalF;\CalB)_{\bfun(\Omega)}$
  in several statements below simply by the operator norm $\mnorm{\id-P}_{\CalF \to  \bfun(\Omega)}$,
  with $\id$ denoting the embedding $\CalF\hookrightarrow\bfun(\Omega)$.
  Also, we would like to mention that the dependence of quasi-projections of the choice of the dictionary $\CalB$ is visible through the appearance of $\Sigma_n$, $\Sigma_{\kappa n}$, $V_J$, and $V_{J^\ast}$ in \Cref{def:QuasiProjection} but is omitted in the notation. 
\end{remark}

\subsection{Gelfand, Kolmogorov and approximation numbers}
\label{sect:widths}
For benchmark reasons we define the following asymptotic quantities
for a given quasi-normed space $\CalF$ compactly embedded into another normed space $X$,
e.g., $X = \bfun(\Omega)$ or $X = L^2(\mu)$.
Detailed expositions on these quantities can be found in the books \cite{Pietsch1987,Pinkus1985}.

\begin{definition}[Kolmogorov numbers]
  For a quasi-normed space $\CalF$ compactly embedded into $X$ we define the $n$th \emph{Kolmogorov number} of $\CalF$ as
  \begin{equation*}
    d_n(\CalF)_X
    \defeq \inf_{\dim(L) \leq n}
             \sup_{f\in B_\CalF}
               \inf_{g\in L}
                 \mnorm{f-g}_X
  \end{equation*}
 where the outermost infimum runs over the subspaces $L$ of $X$ of dimension at most $n$.
\end{definition}

\begin{definition}[Gelfand numbers]
  For a quasi-normed space $\CalF$ compactly embedded into $X$ we define the $n$th \emph{Gelfand number} of $\CalF$ as
  \begin{equation*}
    c_n(\CalF)_X
    \defeq \inf_{\codim(L) \leq n}
             \sup_{\substack{f \in L\cap B_{\CalF}}}
               \mnorm{f}_X
  \end{equation*}
  where the outermost infimum runs over the closed subspaces $L$ of $\CalF$ of codimension at most $n$.
\end{definition}
We refer to \cite[Remark~2.3]{DirksenUl2018} for a discussion of similar quantities.
Finally, we recall the definition of the approximation numbers, which we need in a more general context, namely for linear operators $T:\CalF \to X$. 

\begin{definition}[approximation numbers]
  For a quasi-normed space $\CalF$ and a linear operator $T:\CalF \to X$ we define the $n$th \emph{approximation number} of $T$ as
  \begin{equation}\label{eq:approx}
    a_n(T)
    \defeq \inf_{\rank A \leq n}
             \sup_{f\in B_{\CalF}}
               \mnorm{Tf-Af}_X,
\end{equation}
where the outermost infimum runs over the bounded linear operators $A:\CalF\to X$ whose rank is at most $n$.
If $\CalF$ is continuously embedded into $X$ and\linebreak $T = \id:\CalF \to X$, we write $a_n(\CalF)_X$ for $a_n(T)$.
\end{definition}

\begin{remark}\label{rem:widths}
The above definitions coincide with the classical approach to $s$-numbers for operators, see \cite[Chapter~2]{Pietsch1987}.
We mostly restrict to the case of the identity/embedding operator from $\CalF$ to $X$, assuming the continuous embedding.
As pointed out by Heinrich \cite{Heinrich1989}, there are certain issues when comparing for instance approximation numbers and linear widths, which are usually defined for a bounded set $K \subset X$ as
\begin{equation}\label{eq:widths}
	\lambda_n(K)_X \defeq \inf_{\substack{A:X\to X\\
	\rank A \leq n}}\sup_{x\in K}\mnorm{x-Ax}_X\,.
\end{equation}
However, in case of $X=H$ being a Hilbert space we always have 
\begin{equation*}
d_n(\CalF)_H = a_n(\CalF)_H = \lambda_n(B_\CalF)_H.
\end{equation*} 
Moreover, for any continuous linear operator $T:\CalF \to H$ it also holds
\begin{equation*}
	a_n(T:\CalF \to H) = \lambda_n(T(B_\CalF))_H
\end{equation*}
even in case of quasi-Banach spaces $\CalF$, see \Cref{thm:approximation-numbers-linear-widths} for a proof.
\end{remark}

We also have 
\begin{equation*}
	a_n(\CalF)_X \leq \varrho^{\mathrm{lin}}_n (\CalF)_X
\end{equation*}
and 
\begin{equation*}
	 c_n(\CalF)_X \leq \varrho_n(\CalF)_X
\end{equation*}
when $\CalF$ is a Banach space of functions $\Omega \to\CC$ which is continuously embedded
into $\bfun(\Omega)\hookrightarrow X$, see \Cref{lem:gelfand-sampling}.

\subsection{Sampling numbers and basis pursuit denoising }
\label{sect:snumbers}

In the definition of the sampling numbers $\varrho_n(\CalF)_X$, there appears an infimum over
(possibly nonlinear) \emph{reconstruction maps} $R:\CC^m\to X$.
Upper bounds on the sampling numbers can be established by investigating one particular reconstruction map.
Our choice uses the optimization problem called \emph{basis pursuit denoising} as a building block,
see \cite[Chapter~15]{RauhutCompressiveSensingBook} for a discussion of iterative algorithms for its numerical solution.

\begin{definition}\label{def:reconstruction-operator}
  Let $\eta>0$, $J^\ast\subset I$ a finite set, $t_1,\ldots,t_m\in\Omega$.
  Put
  \begin{equation*}
      A \defeq \lr{b_j (t_\ell)}_{\ell \in \FirstN{m}, j \in J^\ast}
      \in \CC^{m\times J^\ast}
  \end{equation*}
  and for each $y \in \CC^m$,
  \begin{equation}
    R_\eta(y)
    \defeq \sum_{j \in J^\ast} (x^\#(y))_j  b_j
    \in V_{J^\ast}\subset \bfun(\Omega)
    ,
    \label{eq:ReconstructionOperator}
  \end{equation}
  where $x^\# (y) \in \CC^{J^\ast}$ is any (fixed) solution of the minimization problem
  \begin{equation}
    \inf_{z \in \CC^{J^\ast}} \mnorm{z}_{\ell^1(J^\ast)}
    \quad \text{subject to} \quad
    \mnorm{ A z - y }_{\ell^2(m)} \leq \eta \sqrt{m}
    \label{eq:MinimizationProblem}
  \end{equation}
  if such a solution exists, and $x^\#(y)=0$ otherwise.
  This defines a (not necessarily linear) function $R_\eta:\CC^m \to \bfun(\Omega)$.
\end{definition}
Note that the reconstruction method defined in \Cref{def:reconstruction-operator} is oblivious to any function classes $\CalF$.
Still it requires the choice of certain parameters such as the number $\eta>0$.
In \Cref{thm:restrictedbestnterm}, we will show that for a certain choice of $\eta$ dependent on $\CalF$, the worst-case $L^2$-approximation error achieved by the reconstruction method $R_\eta$ acting on samples functions from the unit ball of $\CalF$ is (up to a multiplicative constant) bounded above by the following approximation characteristic.

\begin{definition}\label{def:restricted-best-n-term}
Under \Cref{assu:StandingAssumptions}, let $n\in\N$ and $J^\ast\subset I$.
We define the \emph{$J^\ast$-restricted best $n$-term approximation error} of $f\in \bfun(\Omega)$ as
\begin{equation*}
\sigma_{n,J^\ast}(f;\CalB)_{\bfun(\Omega)}\defeq \inf_{g\in\Sigma_n\cap V_{J^\ast}}\mnorm{f-g}_{\bfun(\Omega)}.
\end{equation*}
For a quasi-normed space $\CalF\hookrightarrow \bfun(\Omega)$, we furthermore define
\begin{equation*}
\sigma_{n,J^\ast}(\CalF;\CalB)_X\defeq\sup_{f\in B_\CalF}\sigma_{n,J^\ast}(f;\CalB)_{\bfun(\Omega)}.
\end{equation*}
\end{definition}
 
Given a sparse approximation $f^\ast\in \Sigma_n\cap V_{J^\ast}$ for the unknown function $f$,
the samples of $f$ can be interpreted as noisy samples of $f^\ast$.
Under certain assumptions, results from compressed sensing provide guarantees for the recovery
of sparse signals from noisy samples.
The reconstruction error for $f^\ast$ then gives an estimate for the reconstruction error for $f$.
This leads us to the following theorem.

During the revision of the paper at hand, we were made aware of the article \cite{RauhutWa2016} in which some of the ideas just outlined also appear.
However, the approach in \cite[Theorem~6.1]{RauhutWa2016} suits having the Wiener algebra as the target space in \Cref{def:restricted-best-n-term}, whereas the target space $\bfun(\Omega)$ allows us to employ more easily the literature for upper bounds on best $n$-term approximation errors, see \Cref{sec:SpecificApplications}.
\begin{theorem}\label{thm:restrictedbestnterm}
Under \Cref{assu:StandingAssumptions}, there exist universal, positive constants
$C,\widetilde{C},\gamma$ such that the following holds:
Let $n\in\N$ and let $J^\ast\subset I$ be finite with $N\defeq\card{J^\ast}$.
Let $\CalF\hookrightarrow \bfun(\Omega)$ be a quasi-normed space and put
\begin{align*}
\eta&\defeq \sigma_{n,J^\ast}(\CalF;\CalB)_{\bfun(\Omega)},\\
m&\defeq \ceil{C\cdot K^2\cdot n\cdot \Log{n}^3\cdot\Log{N}}.
\end{align*}
Let $t_1,\ldots,t_m\stackrel{iid}{\sim}\mu$.
Then with probability at least $1-N^{-\gamma\Log{n}^3}$, it holds that
\begin{equation}
\sup_{f\in B_\CalF}\mnorm{f-R_\eta(f(t_1),\ldots,f(t_m))}_{L^2(\mu)}\leq\widetilde{C}\eta,\label{eq:target}
\end{equation}
In addition, the approximant $R_\eta(f(t_1),\ldots,f(t_m))$ is contained in $V_{J^\ast}$.
\end{theorem}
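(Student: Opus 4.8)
The plan is to combine the restricted isometry property (RIP) of subsampled bounded orthonormal systems with the robust recovery guarantee for basis pursuit denoising, exploiting the fact that the $J^\ast$-restricted best $n$-term approximant of a function is represented by an \emph{exactly $n$-sparse} coefficient vector, so that the compressibility term in the BPDN error bound drops out. Concretely, I would first record that the infimum defining $\sigma_{n,J^\ast}(f;\CalB)_{\bfun(\Omega)}$ is attained: since $\CalB$ is orthonormal in $L^2(\mu)$, the $b_j$ are linearly independent in $\bfun(\Omega)$, so $\CC^{J^\ast}\ni x\mapsto\sum_{j\in J^\ast}x_jb_j\in V_{J^\ast}$ is a linear bijection under which $\Sigma_n\cap V_{J^\ast}$ is the (closed) finite union of the at-most-$n$-dimensional coordinate subspaces of $\CC^{J^\ast}$, and on each of those $g\mapsto\mnorm{f-g}_{\bfun(\Omega)}$ is continuous and coercive. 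Hence for every $f\in B_\CalF$ there is an $n$-sparse $x(f)\in\CC^{J^\ast}$ such that $g_f\defeq\sum_{j\in J^\ast}x_j(f)b_j$ satisfies $\mnorm{f-g_f}_{\bfun(\Omega)}=\sigma_{n,J^\ast}(f;\CalB)_{\bfun(\Omega)}\le\eta$.

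Next I would set up the measurement matrix: with $A=\lr{b_j(t_\ell)}_{\ell\in\FirstN{m},\,j\in J^\ast}$ as in \Cref{def:reconstruction-operator}, put $\widetilde{A}\defeq m^{-1/2}A$. Since $(b_j)_{j\in J^\ast}$ is orthonormal in $L^2(\mu)$ with $\mnorm{b_j}_{\bfun(\Omega)}\le K$ and $t_1,\dots,t_m$ are i.i.d.\ $\sim\mu$, the matrix $\widetilde{A}$ is a sampling matrix of a bounded orthonormal system with constant $K$, so the RIP theorems of \cite{Bourgain2014,HavivRe2017} (see also \cite{RauhutCompressiveSensingBook}) apply. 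Used with sparsity level $2n$, they show that for a suitable universal $C$ and the stated value of $m$ one has $\delta_{2n}(\widetilde{A})\le\delta^\ast$ with probability at least $1-N^{-\gamma\Log{n}^3}$, where $\delta^\ast$ is a fixed universal constant (e.g.\ $\delta^\ast=1/\sqrt{2}$) small enough for the noisy BPDN bound; here $C,\gamma$ are universal, and it is precisely these RIP results that produce the powers of $\Log{n}$ in $m$ and the polynomial-in-$N$ failure probability.

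Finally I would argue on the RIP event $\{\delta_{2n}(\widetilde{A})\le\delta^\ast\}$, uniformly over $f\in B_\CalF$. For $y\defeq(f(t_1),\dots,f(t_m))$ and $e\defeq y-Ax(f)=\big((f-g_f)(t_\ell)\big)_{\ell\in\FirstN{m}}$ we have $\mnorm{e}_{\ell^2(m)}\le\sqrt{m}\,\mnorm{f-g_f}_{\bfun(\Omega)}\le\eta\sqrt{m}$, so $x(f)$ is feasible for \eqref{eq:MinimizationProblem} and a true minimizer $x^\#(y)$ exists; dividing the constraint by $\sqrt{m}$ shows that $x^\#(y)$ is a basis pursuit denoising solution for $\widetilde{A}$, data $m^{-1/2}y=\widetilde{A}x(f)+m^{-1/2}e$, and noise level $\eta\ge\mnorm{m^{-1/2}e}_{\ell^2(m)}$. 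Because $x(f)$ is $n$-sparse, its best $n$-term approximation error in $\ell^1$ vanishes, so the robust recovery estimate under $\delta_{2n}(\widetilde{A})\le\delta^\ast$ (\cite{RauhutCompressiveSensingBook}) gives $\mnorm{x^\#(y)-x(f)}_{\ell^2(J^\ast)}\le C_2\eta$ for a universal $C_2$. Transferring back, $R_\eta(y)-g_f=\sum_{j\in J^\ast}\big(x^\#(y)-x(f)\big)_jb_j$, so orthonormality of $(b_j)_{j\in J^\ast}$ in $L^2(\mu)$ yields $\mnorm{R_\eta(y)-g_f}_{L^2(\mu)}=\mnorm{x^\#(y)-x(f)}_{\ell^2(J^\ast)}\le C_2\eta$; combined with $\mnorm{f-g_f}_{L^2(\mu)}\le\mnorm{f-g_f}_{\bfun(\Omega)}\le\eta$ (valid since $\mu$ is a probability measure) and the triangle inequality in $L^2(\mu)$, this gives $\mnorm{f-R_\eta(y)}_{L^2(\mu)}\le(1+C_2)\eta$. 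Setting $\widetilde{C}\defeq1+C_2$ and taking the supremum over $f\in B_\CalF$ (the RIP event being independent of $f$) establishes \eqref{eq:target}, and $R_\eta(y)\in V_{J^\ast}$ is immediate from \eqref{eq:ReconstructionOperator}.

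I expect the main obstacle to be invoking the RIP result for bounded orthonormal systems in exactly the required form — lining up the logarithmic exponents in the sample budget $m$ and the polynomial-in-$N$ failure probability with what \cite{Bourgain2014,HavivRe2017} provide, and checking that the RIP threshold there is at least as small as the $\delta^\ast$ demanded by the noisy BPDN bound. The remaining steps are bookkeeping; the one genuinely useful point is that working with the \emph{$J^\ast$-restricted} best $n$-term error makes the reference approximant exactly $n$-sparse, which removes the usual $\sigma_n(\cdot)_1/\sqrt{n}$ contribution from the recovery estimate.
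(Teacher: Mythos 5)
Your argument is correct and follows essentially the same route as the paper: the paper packages the RIP-for-bounded-orthonormal-systems step and the noisy basis pursuit recovery guarantee into a single black box (\Cref{thm:CSResult}, taken from Rauhut--Ward), applies it with $s=n$ to the exactly $n$-sparse coefficient vector of the restricted best $n$-term approximant (so that $\sigma_s(x)_1=0$), and concludes with the same triangle inequality in $L^2(\mu)$. The only slip is a citation: for a general bounded orthonormal system sampled i.i.d.\ from $\mu$, the RIP statement with the $\Log{n}^3\Log{N}$ sample budget you use is the Rauhut--Ward/Foucart--Rauhut one, not \cite{Bourgain2014,HavivRe2017}, which concern randomly subsampled unitary (DFT) matrices and are invoked in the paper only to shave one logarithmic factor in the Fourier-on-a-grid setting.
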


The proof of the above theorem crucially relies on the following result
from compressive sensing.

\begin{theorem}[{\cite[Theorems 4.2 and 4.3]{RauhutWard2012}}]\label{thm:CSResult}
  There exist universal constants $C, C_1, C_2$, $\gamma>0$ such that the following holds:
  Let $N\geq 2$ and $(\phi_j)_{j \in \FirstN{N}} \subset L^2(\mu)$ be an orthonormal system with
  $\max_{j\in \FirstN{N}} \mnorm{\phi_j}_{\bfun(\Omega)} \leq K$ and let $s,m \in \N$ satisfy
  \begin{equation*}
      m \geq C \cdot K^2 \cdot s\cdot \Log{s}^3\cdot \Log{N}.
  \end{equation*}
  Put $A = (\phi_j(t_\ell))_{\ell \in \FirstN{m}, j \in \FirstN{N}}$
  for $t_1,\ldots,t_m \overset{iid}{\sim} \mu$.
  Then with probability at least $1 - N^{-\gamma\Log{s}^3}$
  with respect to the choice of $t_1,\ldots,t_m$ the following holds:
   Given any $\eta > 0$, $x\in \CC^N$, $y\in \CC^m$ with $y = A x + e$ and $\mnorm{e}_{\ell^2(m)} \leq \eta \sqrt{m}$,
   and a solution $x^\# \in \CC^N$ of the minimization problem
  \begin{equation*}
    \inf_{z \in \CC^N}
      \mnorm{z}_{\ell^1(N)}
    \quad \text{subject to} \quad
    \mnorm{ A z - y}_{\ell^2(m)} \leq \eta \sqrt{m}
    ,
  \end{equation*}
  then
  \begin{equation*}
    \mnorm{x - x^\# }_{\ell^2(N)}
    \leq \frac{C_1}{\sqrt{s}} \sigma_s(x)_1 + C_2 \eta
    ,
  \end{equation*}
  where
  \begin{equation*}
    \sigma_s(x)_1
    \defeq \inf_{z \in \CC^N, \mnorm{z}_{\ell^0(N)} \leq s} \mnorm{ x - z}_{\ell^1(N)},
  \end{equation*}
  with $\mnorm{z}_{\ell^0(N)} \defeq \card{\setcond{ j \in \FirstN{N} }{ z_j \neq 0}}$.
\end{theorem}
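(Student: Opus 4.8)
The plan is to derive the statement by combining two classical ingredients from compressed sensing: a restricted isometry property (RIP) for the renormalized sampling matrix, and a deterministic stable-and-robust recovery guarantee for basis pursuit denoising under RIP. First I would pass to the renormalized matrix $\widetilde{A} \defeq \frac{1}{\sqrt{m}} A$. Dividing the measurement model $y = Ax+e$ and the feasibility constraint $\mnorm{Az - y}_{\ell^2(m)} \le \eta\sqrt{m}$ by $\sqrt{m}$ shows that $x^\#$ is simultaneously a minimizer of $\inf_z \mnorm{z}_{\ell^1(N)}$ subject to $\mnorm{\widetilde{A} z - \widetilde{y}}_{\ell^2(m)} \le \eta$, where $\widetilde{y} \defeq \frac{1}{\sqrt{m}} y = \widetilde{A} x + \widetilde{e}$ and $\mnorm{\widetilde{e}}_{\ell^2(m)} \le \eta$. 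It therefore suffices to prove the error bound for the normalized problem. Note that orthonormality of $(\phi_j)$ in $L^2(\mu)$ gives $\mathbb{E}[\widetilde{A}^\ast \widetilde{A}] = I_N$, since the $(j,k)$ entry has expectation $\skpr{\phi_k}{\phi_j}_{L^2(\mu)} = \delta_{jk}$; thus $\widetilde{A}$ is an isometry in expectation, and the content of the RIP is precisely that this approximate isometry holds uniformly over sparse vectors with high probability.

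Next is the RIP step, which I expect to be the main obstacle. The goal is to show that for a prescribed threshold $\delta \in (0, 1/\sqrt{2})$, under $m \ge C K^2 \delta^{-2} s \Log{s}^3 \Log{N}$, the restricted isometry constant satisfies $\delta_{2s}(\widetilde{A}) \le \delta$ except with probability at most $N^{-\gamma \Log{s}^3}$. I would write $\delta_{2s}(\widetilde{A}) = \sup \card{\mnorm{\widetilde{A} z}_{\ell^2(m)}^2 - \mnorm{z}_{\ell^2(N)}^2}$ over $2s$-sparse unit vectors $z$ and recognize this as the supremum of a centered empirical chaos process indexed by the set of sparse unit vectors. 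The sharp bound requires (i) symmetrization to replace the centered process by a Rademacher/Gaussian chaos, (ii) Dudley-type entropy-integral or generic-chaining estimates for the expected supremum, using covering-number bounds for the $s$-sparse unit ball (a volumetric estimate on each fixed support, unioned over the $\binom{N}{s}$ supports, which is where the $\Log{N}$ factor enters), and (iii) a concentration inequality for suprema of chaos processes to upgrade the expectation bound to a tail bound. The refined powers $\Log{s}^3$ in the sample count and the strong deviation exponent $N^{-\gamma\Log{s}^3}$ are exactly the improvements established by Rauhut and Ward, building on the chaining machinery of \cite{Bourgain2014,HavivRe2017} and the moment methods in \cite{RauhutCompressiveSensingBook}; reproducing them is the technical heart, and I would import these probabilistic estimates rather than rederive them from scratch.

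Finally, the recovery step is deterministic and standard. On the RIP event $\setn{\delta_{2s}(\widetilde{A}) \le \delta}$ with $\delta < 1/\sqrt{2}$, the matrix $\widetilde{A}$ satisfies a robust null-space property, and the Cand\`es-type argument applies: writing $x^\# = x + h$, the $\ell^1$-minimality of $x^\#$ together with feasibility of $x$ yields a cone constraint confining $h$, namely its off-support $\ell^1$-mass is controlled by its on-support $\ell^1$-mass plus $2\sigma_s(x)_1$, and the RIP then converts this into an $\ell^2$-bound. Quantitatively this gives $\mnorm{x - x^\#}_{\ell^2(N)} \le \frac{C_1}{\sqrt{s}}\sigma_s(x)_1 + C_2\, \eta$ with constants depending only on $\delta$, the $\eta$ term arising from $\mnorm{\widetilde{A} h}_{\ell^2(m)} \le 2\eta$ via the triangle inequality using feasibility of both $x$ and $x^\#$. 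Combining this deterministic bound, valid on the RIP event, with the probability estimate from the RIP step completes the proof; fixing $\delta$ to be a universal constant below $1/\sqrt{2}$ absorbs the factor $\delta^{-2}$ into the universal constant $C$.
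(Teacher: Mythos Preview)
The paper does not prove this theorem; it is quoted verbatim from \cite[Theorems~4.2 and~4.3]{RauhutWard2012} and used as a black box in the proof of \Cref{thm:restrictedbestnterm}. Your outline is the standard route to such results and is essentially correct: normalize to $\widetilde{A}=m^{-1/2}A$, establish an RIP bound $\delta_{2s}(\widetilde{A})\le\delta$ with high probability via symmetrization and chaining/entropy arguments for bounded orthonormal systems, and then invoke the deterministic $\ell^1$-recovery guarantee under RIP.

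One minor correction: you attribute the $\Log{s}^3$ sample complexity to Rauhut--Ward ``building on the chaining machinery of \cite{Bourgain2014,HavivRe2017}''. The chronology is reversed. The $\Log{s}^3$ bound for general bounded orthonormal systems predates those works and goes back to Rudelson--Vershynin and the moment/chaining methods surveyed in \cite{RauhutCompressiveSensingBook}; the papers \cite{Bourgain2014,HavivRe2017} are \emph{later} refinements, specific to subsampled Fourier/DFT matrices, that improve the logarithmic exponent to $\Log{s}^2$. In the present paper they are invoked separately, in \Cref{thm:improved-fourier}, precisely to shave one logarithm in the Fourier setting.
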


Given the bound in \Cref{thm:CSResult}, we can now prove \Cref{thm:restrictedbestnterm}.

\begin{proof}[{Proof of \Cref{thm:restrictedbestnterm}}]
Let $(\phi_j)_{j\in\FirstN{N}}$ be an enumeration of $(b_j)_{j\in J^\ast}$.
Note that (up to identification of the indices) the matrix $A$
from the statement of \Cref{thm:restrictedbestnterm} (or \Cref{def:reconstruction-operator}) coincides with the matrix $A$ from \Cref{thm:CSResult}.
Thus, we know that with probability at least $1-N^{-\gamma\Log{n}^3}$ with respect to
the choice of $t_1,\ldots,t_m$, the conclusion of \Cref{thm:CSResult} holds (for $s=n$).
It remains to show that under this condition \eqref{eq:target} holds.
Let $f\in B_\CalF$ be arbitrary.
Since $V_{J^\ast}$ is a finite-dimensional vector space
and since $\Sigma_n\cap V_{J^\ast}\subset V_{J^\ast}$ is closed as a finite union of subspaces,
there exists $f^\ast\in\Sigma_n\cap V_{J^\ast}$ satisfying
$\mnorm{f-f^\ast}_{\bfun(\Omega)}\leq\sigma_{n,J^\ast}(\CalF;\CalB)_{\bfun(\Omega)}=\eta$.
Since $f^\ast\in\Sigma_n\cap V_{J^\ast}$, by choice of $(\phi_j)_{j\in\FirstN{N}}$,
we can write $f^\ast=\sum_{j=1}^N x_j\phi_j$ with $x\in \CC^N$ and $\mnorm{x}_{\ell^0(N)}\leq n=s$.
In particular, this implies $\sigma_s(x)_1=0$.
Since $\mnorm{f-f^\ast}_{\bfun(\Omega)}\leq \eta$, we know for
\begin{equation*}
y\defeq (f(t_1),\ldots,f(t_m))\quad\text{ and }\quad e\defeq y-(f^\ast(t_1),\ldots,f^\ast(t_m))
\end{equation*}
that $\mnorm{e}_{\ell^2(m)}\leq\mnorm{e}_{\ell^\infty(m)}\sqrt{m}\leq\eta\sqrt{m}$.
Also we have $Ax+e=y$ because $(Ax)_\ell=\sum_{j=1}^N x_j \phi_j(t_\ell)=f^\ast(t_\ell)$.
Hence, by choice of $x^\#=x^\#(y)$ and by the bound from \Cref{thm:CSResult}, we see
\begin{equation*}
\mnorm{x-x^\#}_{\ell^2(N)}\leq \frac{C_1}{\sqrt{s}}\sigma_s(x)_1+C_2\eta=C_2\eta
\end{equation*}
and thus (because of $f^\ast=\sum_{j=1}^N x_j\phi_j$ and $R_n(y)=\sum_{j\in J^\ast} (x^\#(y))_jb_j$)
\begin{align*}
\mnorm{f-R_\eta(f(t_1),\ldots,f(t_m))}_{L^2(\mu)}&=\mnorm{f-R_\eta(y)}_{L^2(\mu)}\\
&\leq \mnorm{f-f^\ast}_{L^2(\mu)}+\mnorm{f^\ast-R_\eta(y)}_{L^2(\mu)}\\
&\leq \mnorm{f-f^\ast}_{L^2(\mu)}+\mnorm{x-x^\#}_{\ell^2}\\
&\leq (1+C_2)\eta.
\qedhere
\end{align*}
\end{proof}

The quantity $\sigma_{n,J^\ast}(\CalF;\CalB)_{\bfun(\Omega)}$ can be bounded above
by a combination of $E_J(f;\CalB)_{\bfun(\Omega)}$ and $\sigma_n(f;\CalB)_{\bfun(\Omega)}$ when a quasi-projection is available.
This is made explicit by the following lemma.

\begin{lemma}\label{lem:conn}
Under \Cref{assu:StandingAssumptions}, let $J,J^\ast\subset I$, $\kappa,n\in\N$, $\tau>0$, and $P:\bfun(\Omega)\to \bfun(\Omega)$ with the following properties
\begin{enumerate}[label={(\roman*)},leftmargin=*,align=left,noitemsep]
\item{$P(\Sigma_n)\subset\Sigma_{\kappa n}$,\label{sparse}}
\item{$Pf=f$ for $f\in V_J$,\label{fix}}
\item{$Pf\in V_{J^\ast}$ for all $f\in \bfun(\Omega)$,\label{codomain}}
\item{$P:X\to X$ is linear and bounded with $\mnorm{P}_{\bfun(\Omega)\to \bfun(\Omega)}\leq\tau$.}
\end{enumerate}
Then we have for any $f\in \bfun(\Omega)$ that
\begin{equation*}
\sigma_{\kappa n,J^\ast}(f;\CalB)_{\bfun(\Omega)}\leq (1+\tau)E_J(f;\CalB)_{\bfun(\Omega)}+\tau\sigma_n(f;\CalB)_{\bfun(\Omega)}.
\end{equation*}
\end{lemma}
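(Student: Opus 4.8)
The plan is to approximate $f$ first in the linear space $V_J$, then apply the quasi-projection $P$ to turn a general sparse approximation into one that also lies in $V_{J^\ast}$, and finally combine the two error contributions using the triangle inequality and the norm bound on $P$. Concretely, fix $f \in \bfun(\Omega)$ and let $\eps > 0$. First I would choose $h \in V_J$ with $\mnorm{f - h}_{\bfun(\Omega)} \leq E_J(f;\CalB)_{\bfun(\Omega)} + \eps$, and choose $g \in \Sigma_n$ with $\mnorm{f - g}_{\bfun(\Omega)} \leq \sigma_n(f;\CalB)_{\bfun(\Omega)} + \eps$. The candidate approximant to $f$ from $\Sigma_{\kappa n} \cap V_{J^\ast}$ will be $Ph + P(g - h)$, but note $Ph = h$ is \emph{not} in $\Sigma_{\kappa n}$ in general; the right candidate is $P g = P h + P(g - h) = h + P(g-h)$. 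Indeed, $Pg \in V_{J^\ast}$ by property \ref{codomain}, and $Pg = P(g) \in \Sigma_{\kappa n}$ since $g \in \Sigma_n$ and $P(\Sigma_n) \subset \Sigma_{\kappa n}$ by \ref{sparse}. So $Pg \in \Sigma_{\kappa n} \cap V_{J^\ast}$ is an admissible competitor in the infimum defining $\sigma_{\kappa n, J^\ast}(f;\CalB)_{\bfun(\Omega)}$.

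Next I would estimate $\mnorm{f - Pg}_{\bfun(\Omega)}$. Write $f - Pg = (f - Ph) + (Ph - Pg) = (f - Ph) + P(h - g)$. For the first term, the remark following \Cref{def:QuasiProjection} (applied with the hypotheses \ref{fix} and the norm bound, which are exactly the conditions of that remark) gives $\mnorm{f - Ph}_{\bfun(\Omega)} \le (1 + \tau)\, \mnorm{f - h}_{\bfun(\Omega)}$; alternatively one derives this directly: since $Ph = h$ would be false in general, instead use $Ph$ with $h \in V_J$ so $Ph = h$, hence $f - Ph = f - h$ and in fact $\mnorm{f - Ph}_{\bfun(\Omega)} = \mnorm{f-h}_{\bfun(\Omega)}$. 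Wait — more carefully: since $h \in V_J$, property \ref{fix} gives $Ph = h$, so $f - Ph = f - h$ exactly, and $\mnorm{f-Ph}_{\bfun(\Omega)} = \mnorm{f-h}_{\bfun(\Omega)} \le E_J(f;\CalB)_{\bfun(\Omega)} + \eps$. For the second term, the operator norm bound gives $\mnorm{P(h-g)}_{\bfun(\Omega)} \le \tau \mnorm{h - g}_{\bfun(\Omega)} \le \tau\big(\mnorm{h-f}_{\bfun(\Omega)} + \mnorm{f-g}_{\bfun(\Omega)}\big) \le \tau\big(E_J(f;\CalB)_{\bfun(\Omega)} + \sigma_n(f;\CalB)_{\bfun(\Omega)} + 2\eps\big)$.

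Adding the two bounds yields
\begin{equation*}
\sigma_{\kappa n, J^\ast}(f;\CalB)_{\bfun(\Omega)} \le \mnorm{f - Pg}_{\bfun(\Omega)} \le (1+\tau) E_J(f;\CalB)_{\bfun(\Omega)} + \tau\, \sigma_n(f;\CalB)_{\bfun(\Omega)} + (1 + 2\tau)\eps,
\end{equation*}
and letting $\eps \downarrow 0$ gives the claim. The only genuinely delicate point — the one I would be most careful about — is getting the decomposition right so that the competitor genuinely lies in $\Sigma_{\kappa n} \cap V_{J^\ast}$: one must apply $P$ to the \emph{$n$-sparse} element $g$ (so that \ref{sparse} applies) rather than to $h$, and then exploit $Ph = h$ to keep the $V_J$-approximation error from being inflated by more than the factor $(1+\tau)$. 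Everything else is the triangle inequality and the operator-norm estimate. (One should also note the minor subtlety that $P$ is only assumed bounded on $\bfun(\Omega)$, not on $X$; but since all the quantities in the statement are measured in $\bfun(\Omega)$, this causes no difficulty, and the hypothesis "$P : X \to X$" in \ref{codomain} is only used through its $\bfun(\Omega) \to \bfun(\Omega)$ incarnation.)
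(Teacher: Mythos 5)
Your proposal is correct and follows essentially the same route as the paper's proof: approximate $f$ by $h\in V_J$ and $g\in\Sigma_n$, take $Pg\in\Sigma_{\kappa n}\cap V_{J^\ast}$ as the competitor, and split $f-Pg=(f-h)+P(h-g)$ using $Ph=h$ and $\mnorm{P}_{\bfun(\Omega)\to\bfun(\Omega)}\leq\tau$. The only differences are cosmetic (variable names and the order in which the triangle inequality is applied), so nothing further is needed.
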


\begin{proof}
Let $\eps>0$.
Choose $f_n\in\Sigma_n$ such that $\mnorm{f-f_n}_{\bfun(\Omega)}\leq \sigma_n(f;\CalB)_{\bfun(\Omega)}+\eps$.
Choose $g\in V_J$ such that $\mnorm{f-g}_{\bfun(\Omega)}\leq E_J(f;\CalB)_{\bfun(\Omega)}+\eps$.
Note that $g=Pg$ by property \ref{fix}.
Define $f^\ast\defeq Pf_n$ and note by \ref{sparse} and \ref{codomain} that $f^\ast\in \Sigma_{\kappa n}\cap V_{J^\ast}$.
Therefore
\begin{align*}
&\sigma_{\kappa n,J^\ast}(f;\CalB)_{\bfun(\Omega)}\\
&\leq\mnorm{f-f^\ast}_{\bfun(\Omega)}\\
&\leq \mnorm{f-g}_{\bfun(\Omega)}+\mnorm{g-f^\ast}_{\bfun(\Omega)}\\
&\leq E_J(f;\CalB)_{\bfun(\Omega)}+\eps+\mnorm{P(g-f_n)}_{\bfun(\Omega)}\\
&\leq E_J(f;\CalB)_{\bfun(\Omega)}+\eps+\tau\mnorm{g-f_n}_{\bfun(\Omega)}\\
&\leq E_J(f;\CalB)_{\bfun(\Omega)}+\eps+\tau(\mnorm{g-f}_{\bfun(\Omega)}+\mnorm{f-f_n}_{\bfun(\Omega)})\\
&\leq E_J(f;\CalB)_{\bfun(\Omega)}+\eps+\tau(E_J(f;\CalB)_{\bfun(\Omega)}+\sigma_n(f;\CalB)_{\bfun(\Omega)}+2\eps)\\
&=(1+\tau)E_J(f;\CalB)_{\bfun(\Omega)}+\tau\sigma_n(f;\CalB)_{\bfun(\Omega)}+2\tau\eps+\eps.
\end{align*}
Since $\eps>0$ was arbitrary, we are done.
\end{proof}

The following result, the main result of this paper, shows that the optimal sampling error can be (essentially) bounded by the nonlinear approximation error
if one is willing to allow an additional logarithmic factor in the number of sampling points.
This might not seem quite obvious at first due to the occurrence of the operator norm
$\mnorm{P}_{\bfun(\Omega) \to \bfun(\Omega)}$ and the linear approximation error $E_J(\CalF;\CalB)_{\bfun(\Omega)}$.
Yet we will see \Cref{sec:SpecificApplications} that in many concrete settings, the term $\sigma_n(\CalF;\CalB)_{\bfun(\Omega)}$
makes the most significant contribution to the right-hand side.

\begin{theorem}\label{thm:MainAbstractResult}
Under \Cref{assu:StandingAssumptions}, there exist universal, positive constants
$C,\widetilde{C},\gamma$ such that the following holds:
Let $\CalF$ be a quasi-normed function space which compactly embeds into $\bfun(\Omega)$.
Moreover, choose $\kappa,n\in\N$ and finite sets $J,J^\ast\subset I$
such that there exists a $(\kappa,n,J,J^\ast,\tau)$ quasi-projection $P:\bfun(\Omega) \to \bfun(\Omega)$
with $\tau\defeq\mnorm{P}_{\bfun(\Omega)\to\bfun(\Omega)}$.
Put
\begin{equation}\label{eq:eta}
	\eta \defeq  \tau\cdot \sigma_n (\CalF;\CalB)_{\bfun(\Omega)}
                     + (1+\tau)\cdot E_J(\CalF;\CalB)_{\bfun(\Omega)}
\end{equation}
and $N\defeq \card{J^\ast}$.
Drawing at least 
\begin{equation}\label{eq:number_samples}
 m \defeq \ceil{C\cdot K^2\cdot \kappa \cdot \Log{\kappa}^3 \cdot n\cdot \Log{n}^3\cdot \Log{N}}
\end{equation}
sampling points $t_j$ independently
and identically distributed from $\mu$, i.e.,\linebreak $t_1,\ldots,t_m$ $\overset{iid}{\sim} \mu$,
then it holds with probability at least $1 - N^{- \gamma\Log{n}^3}$ that
\begin{equation}
  \sup_{f\in B_\CalF}\mnorm{f - R_\eta(f(t_1),\ldots,f(t_m))}_{L^2(\mu)}\\
  \leq \widetilde{C}\eta.
  \label{eq:ExplicitTargetEstimate}
\end{equation}
In addition, the approximant $R_\eta(f(t_1),\ldots,f(t_m))$ is contained in $V_{J^\ast}$.
\end{theorem}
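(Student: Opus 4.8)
The plan is to deduce this theorem from \Cref{thm:restrictedbestnterm} and \Cref{lem:conn}, running the compressed-sensing argument with sparsity level $\kappa n$ rather than $n$.

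First I would note that the four defining properties of a $(\kappa,n,J,J^\ast,\tau)$ quasi-projection $P$ in \Cref{def:QuasiProjection} are exactly the hypotheses of \Cref{lem:conn}. Hence, applying \Cref{lem:conn} and taking the supremum over $f\in B_\CalF$ gives
\begin{equation*}
\sigma_{\kappa n,J^\ast}(\CalF;\CalB)_{\bfun(\Omega)}
\leq (1+\tau)\, E_J(\CalF;\CalB)_{\bfun(\Omega)}+\tau\,\sigma_n(\CalF;\CalB)_{\bfun(\Omega)}
= \eta ,
\end{equation*}
with $\eta$ as in \eqref{eq:eta}.

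Second, I would observe that the proof of \Cref{thm:restrictedbestnterm} never uses the exact equality $\eta=\sigma_{n,J^\ast}(\CalF;\CalB)_{\bfun(\Omega)}$, only the inequality $\eta\geq\sigma_{n,J^\ast}(\CalF;\CalB)_{\bfun(\Omega)}$: for $f\in B_\CalF$ and $f^\ast\in\Sigma_n\cap V_{J^\ast}$ with $\mnorm{f-f^\ast}_{\bfun(\Omega)}\leq\sigma_{n,J^\ast}(\CalF;\CalB)_{\bfun(\Omega)}\leq\eta$, the error vector $e$ still satisfies $\mnorm{e}_{\ell^2(m)}\leq\eta\sqrt{m}$, so the feasibility constraint in \eqref{eq:MinimizationProblem} holds and \Cref{thm:CSResult} applies verbatim, yielding $\mnorm{f-R_\eta(f(t_1),\ldots,f(t_m))}_{L^2(\mu)}\leq(1+C_2)\eta$. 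I would then apply this slightly strengthened version of \Cref{thm:restrictedbestnterm} with $n$ replaced by $\kappa n$, the same $J^\ast$ (so $N=\card{J^\ast}$), and the value $\eta$ from \eqref{eq:eta}, which by the previous paragraph dominates $\sigma_{\kappa n,J^\ast}(\CalF;\CalB)_{\bfun(\Omega)}$. This produces \eqref{eq:ExplicitTargetEstimate} with $\widetilde{C}\defeq 1+C_2$, with probability at least $1-N^{-\gamma\Log{\kappa n}^3}\geq 1-N^{-\gamma\Log{n}^3}$, and with the asserted containment $R_\eta(f(t_1),\ldots,f(t_m))\in V_{J^\ast}$, which is immediate from \eqref{eq:ReconstructionOperator}.

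Finally, I would reconcile the sample count. The strengthened \Cref{thm:restrictedbestnterm} (equivalently, \Cref{thm:CSResult} with $s=\kappa n$) only requires the lower bound $m\geq C\cdot K^2\cdot\kappa n\cdot\Log{\kappa n}^3\cdot\Log{N}$, so its probabilistic guarantee is monotone in the number of sampling points. Using $\Log{\kappa n}=\log(\kappa n+1)\leq\log\bigl((\kappa+1)(n+1)\bigr)=\Log{\kappa}+\Log{n}\leq\tfrac{2}{\log 2}\Log{\kappa}\Log{n}$, hence $\Log{\kappa n}^3\lesssim\Log{\kappa}^3\Log{n}^3$ with a universal constant, one checks that the quantity $m$ in \eqref{eq:number_samples} exceeds this threshold once the universal constant $C$ in \eqref{eq:number_samples} is chosen large enough, which completes the argument. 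The only step that is more than bookkeeping is the observation that the proof of \Cref{thm:restrictedbestnterm} needs $\eta$ merely to be an upper bound for the $J^\ast$-restricted best $n$-term error; everything else is a routine substitution together with the elementary logarithm estimate above.
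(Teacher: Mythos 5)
Your proposal is correct and follows essentially the same route as the paper, which likewise proves the theorem by combining \Cref{thm:restrictedbestnterm} (with $\kappa n$ in place of $n$) with \Cref{lem:conn} and the elementary estimate $\log(\kappa n+1)\lesssim\log(\kappa+1)\log(n+1)$. Your explicit observation that the proof of \Cref{thm:restrictedbestnterm} only uses $\eta\geq\sigma_{\kappa n,J^\ast}(\CalF;\CalB)_{\bfun(\Omega)}$ rather than equality is a detail the paper leaves implicit, and it is indeed needed for the substitution to go through.
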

\begin{proof}
This follows by combining \Cref{thm:restrictedbestnterm} (with $\kappa n$ in place of $n$) and \Cref{lem:conn}.
Also, observe that
\begin{align*}
\log(\kappa n+1)&\leq \log((\kappa+1)(n+1))=\log(\kappa+1)+\log(n+1)\\
&\lesssim \log(\kappa+1)\cdot \log(n+1).\qedhere
\end{align*}
\end{proof}

Note that, since $N \geq 2$, the number $1 - N^{- \gamma\Log{n}^3}$ and therefore
also the probability of choosing \enquote{good} sampling points $t_1,\ldots,t_m$ is close to $1$.
As a corollary of \Cref{thm:MainAbstractResult}, we obtain the corresponding bound on the sampling numbers.

\begin{corollary}\label{Cor:general_sampling}
  Under the assumptions of \Cref{thm:MainAbstractResult}, it holds
  \begin{align*}
   & \varrho_{\ceil{C K^2 \kappa\Log{\kappa}^3  n \Log{n}^3 \Log{N}}}(\CalF)_{L^2(\mu)} \\
    &\leq \widetilde{C}(\mnorm{P}_{\bfun(\Omega)\to\bfun(\Omega)}\cdot \sigma_n (\CalF;\CalB)_{\bfun(\Omega)}\\
    &\qquad + (1 + \mnorm{P}_{\bfun(\Omega)\to\bfun(\Omega)})\cdot E_J(\CalF;\CalB)_{\bfun(\Omega)}).
  \end{align*}
\end{corollary}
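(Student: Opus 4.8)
The plan is to derive the corollary directly from \Cref{thm:MainAbstractResult} by unwinding the definition of the sampling numbers $\varrho_m(\CalF)_{L^2(\mu)}$. Recall that this quantity is an infimum over all choices of sampling points $t_1,\ldots,t_m \in \Omega$ and all (possibly nonlinear) reconstruction maps $R : \CC^m \to L^2(\mu)$ of the worst-case error over the unit ball $B_\CalF$. Thus, to obtain an upper bound on $\varrho_m(\CalF)_{L^2(\mu)}$, it suffices to exhibit \emph{one} concrete choice of sampling points together with \emph{one} reconstruction map whose worst-case error is bounded by the desired right-hand side.

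First I would set $m \defeq \ceil{C K^2 \kappa \Log{\kappa}^3 n \Log{n}^3 \Log{N}}$ exactly as in \eqref{eq:number_samples}, with the universal constants $C$ and $\gamma$ furnished by \Cref{thm:MainAbstractResult}. Since $N = \card{J^\ast} \geq 2$ (indeed $J^\ast$ is a finite index set containing enough elements for the quasi-projection to make sense) and $n \in \N$, the failure probability $N^{-\gamma\Log{n}^3}$ is strictly less than $1$; in particular it is less than $1$, so the event in \Cref{thm:MainAbstractResult} on which \eqref{eq:ExplicitTargetEstimate} holds has positive probability and is therefore nonempty. Hence there exists at least one realization of points $t_1,\ldots,t_m \in \Omega$ for which
\begin{equation*}
  \sup_{f\in B_\CalF}\mnorm{f - R_\eta(f(t_1),\ldots,f(t_m))}_{L^2(\mu)} \leq \widetilde{C}\eta,
\end{equation*}
where $\eta$ is the quantity defined in \eqref{eq:eta}, namely $\eta = \tau\cdot\sigma_n(\CalF;\CalB)_{\bfun(\Omega)} + (1+\tau)\cdot E_J(\CalF;\CalB)_{\bfun(\Omega)}$ with $\tau = \mnorm{P}_{\bfun(\Omega)\to\bfun(\Omega)}$.

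Next I would take these particular sampling points $t_1,\ldots,t_m$ and the particular reconstruction map $R \defeq R_\eta : \CC^m \to \bfun(\Omega) \hookrightarrow L^2(\mu)$ from \Cref{def:reconstruction-operator} (composed with the embedding $\bfun(\Omega)\hookrightarrow L^2(\mu)$, which is legitimate since $\mu$ is a probability measure) as admissible choices in the infimum defining $\varrho_m(\CalF)_{L^2(\mu)}$. This immediately yields
\begin{equation*}
  \varrho_m(\CalF)_{L^2(\mu)} \leq \sup_{f\in B_\CalF}\mnorm{f - R_\eta(f(t_1),\ldots,f(t_m))}_{L^2(\mu)} \leq \widetilde{C}\eta = \widetilde{C}\bigl(\tau\,\sigma_n(\CalF;\CalB)_{\bfun(\Omega)} + (1+\tau)\,E_J(\CalF;\CalB)_{\bfun(\Omega)}\bigr),
\end{equation*}
which is precisely the asserted inequality after substituting back $\tau = \mnorm{P}_{\bfun(\Omega)\to\bfun(\Omega)}$. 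There is essentially no obstacle here: the only thing one must be slightly careful about is the probabilistic-to-deterministic passage (a positive-probability event is nonempty), together with the observation that $R_\eta$ maps into $\bfun(\Omega)$ and hence, via the probability-measure embedding, into $L^2(\mu)$, so it is a legitimate competitor in the definition of $\varrho_m$. The logarithmic bookkeeping in the index $m$ has already been absorbed into \Cref{thm:MainAbstractResult}, so nothing further is needed.
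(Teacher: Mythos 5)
Your proposal is correct and is exactly the argument the paper intends: the corollary is stated without a written proof, and the implicit reasoning is precisely the passage from the positive-probability event in \Cref{thm:MainAbstractResult} to the existence of one good realization of sampling points, which then serves as a competitor in the infimum defining $\varrho_m(\CalF)_{L^2(\mu)}$. Nothing is missing.
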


\subsection{Improved RIP bounds for the Fourier system}
\Cref{thm:CSResult} comprises two statements: Measurement matrices whose entries are the elements of a bounded orthonormal system evaluated at independently and identically distributed points and scaled by the inverse of the square root of the number of rows have the restricted isometry property with high probability, and measurement matrices with the restricted isometry property allow for robust reconstruction of sparse signals.

In the same spirit, the papers \cite{Bourgain2014,HavivRe2017} show that a matrix obtained by rescaling uniformly at random chosen rows from a large unitary matrix has the restricted isometry property with high probability.
In particular, the combination of \cite[Theorem~4.5]{HavivRe2017} applied to matrix representations of the multivariate discrete Fourier transform (DFT) and \cite[Theorem~6.12]{RauhutCompressiveSensingBook} leads to an improvement of one power of $\Log{n}$ compared to \Cref{thm:CSResult}, in case $(\Omega,\mu)$ is the $d$-torus $\T^d\cong [0,1)^d$ with the normalized Haar measure, and $(b_j)_{j\in I}$ is the multivariate Fourier basis $\CalT^d=(\ee^{2\uppi\ii \skpr{k}{\bullet}})_{k\in\Z^d}$.

Before giving the result, let us describe the construction in \cite[Theorem~4.5]{HavivRe2017} for our setting in more detail.
Let $D\in\N$.
We evaluate the functions $e_k:\T\to\CC$, $e_k(x)\defeq \ee^{2\uppi\ii kx}$ from the univariate Fourier basis $\CalT$ at equidistant points\linebreak $t_\ell\defeq\frac{\ell}{2D+1}$ from the unit interval, for $k\in\setn{-D,\ldots,D}$ and $\ell\in\setn{0,\ldots, 2D}$.
The matrix 
\begin{equation*}
Q_{2D+1}=\frac{1}{\sqrt{2D+1}}(e_k(t_\ell))_{\substack{\ell=0,\ldots,2D\\k=-D,\ldots,D}}
\end{equation*}
is the usual $(2D+1)\times (2D+1)$ DFT matrix after a permutation of the columns, so in particular $Q_{2D+1}$ is a unitary matrix.
The multidimensional DFT is separable over dimensions, so the Kronecker product of univariate DFT matrices will serve as a matrix representation of the multivariate DFT, see \cite[p.~230]{PlonkaPoStTa2018}.
In particular, the $d$-fold Kronecker product $Q=Q_{2D+1}\otimes\ldots\otimes Q_{2D+1}$ is a unitary matrix again, see \cite[Theorem~3.42]{PlonkaPoStTa2018}.
Still, the matrix $Q$ is of the form
\begin{equation*}
Q=(2D+1)^{-\frac{d}{2}}(\phi_j(u_\ell))_{\ell,j\in\FirstN{(2D+1)^d}}
\end{equation*}
where $(\phi_j)_{j\in\FirstN{(2D+1)^d}}$ is an enumeration of the functions $e_k\defeq\ee^{2\uppi\ii\skpr{k}{\bullet}}$\linebreak for $k\in [-D,D]^d\cap\Z^d$ and $(u_\ell)_{\ell\in\FirstN{(2D+1)^d}}$ is an enumeration of the grid points $\frac{1}{2D+1}\setn{0,\ldots,2D}^d$.
The subsampling method by \cite[Theorem~4.5]{HavivRe2017} now constructs a matrix $A$ by choosing $m$ rows from $Q$ uniformly at random and multiplying by $(2D+1)^{\frac{d}{2}}$.
In our case, this amounts to directly setting up a matrix $A = (\phi_j(u_\ell))_{\ell \in \FirstN{m}, j \in \FirstN{(2D+1)^d}}$, where again $(\phi_j)_{j\in\FirstN{(2D+1)^d}}$ is an enumeration of the functions $(e_k)_{k\in [-D,D]^d\cap\Z^d}$ but the points $u_1,\ldots,u_m$ are now drawn i.i.d.\ with respect to the uniform measure on $\frac{1}{2D+1}\setn{0,\ldots,2D}^d$.

\begin{theorem}\label{thm:improved-fourier}
There are universal constants $C,C_1,C_2,\gamma>0$ such that the following holds:
Let $D\in\N$ be sufficiently large and $d,s\in \N$.
For
\begin{equation*}
m\geq C\cdot d\cdot s \cdot\Log{s}^2\cdot\log(2D+1),
\end{equation*}
draw points $u_1,\ldots,u_m$ independently
and identically distributed from the uniform measure on the grid $\frac{1}{2D+1}\setn{0,\ldots,2D}^d$, and set
\begin{equation*}
A = (\phi_j(u_\ell))_{\ell \in \FirstN{m}, j \in \FirstN{(2D+1)^d}}
\end{equation*}
where $(\phi_j)_{j\in\FirstN{(2D+1)^d}}$ is an enumeration of $(e_k)_{k\in [-D,D]^d\cap\Z^d}$.
Then with probability at least $1-(2D+1)^{-\gamma\Log{s}d}$ with respect to the choice of $u_1,\ldots,u_m$, the following holds:
Given any $\eta > 0$, $x \in \CC^{(2D+1)^d}$, $y\in\CC^m$ with $y = A x + e$ and $\mnorm{e}_{\ell^2(m)} \leq \eta\sqrt{m} $, and a solution $x^\# \in \CC^{(2D+1)^d}$ of the minimization problem
\begin{equation*}
\inf_{z \in \CC^{(2D+1)^d}} \mnorm{z}_{\ell^1((2D+1)^d)} \quad \text{subject to} \quad \mnorm{ A z - y }_{\ell^2(m)} \leq \eta\sqrt{m},
\end{equation*}
then
\begin{equation*}
\mnorm{x-x^\#}_{\ell^2((2D+1)^d)}\leq \frac{C_1}{\sqrt{s}}\sigma_s(x)_1+C_2\eta.
\end{equation*}
\end{theorem}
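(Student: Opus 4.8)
The statement is essentially a repackaging of two results already available in the literature, so the plan is to unwind the normalizations and keep track of how the multivariate structure enters the exponents. The restricted isometry property (RIP) of a suitably rescaled row-subsample of the unitary DFT matrix $Q$ will be supplied by \cite[Theorem~4.5]{HavivRe2017}, and stable and robust $\ell^1$-recovery from any RIP matrix by \cite[Theorem~6.12]{RauhutCompressiveSensingBook}. Writing $N\defeq(2D+1)^d$, the factor $d$ in the sample-complexity bound and the shape of the failure probability will both be explained by the elementary estimate $\Log{N}\asymp d\log(2D+1)$, valid for $D\geq 1$.

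First I would record the structural facts about $Q=Q_{2D+1}\otimes\dots\otimes Q_{2D+1}$ established in the paragraph preceding the theorem: it is a unitary $N\times N$ matrix, and every entry has modulus $(2D+1)^{-d/2}=N^{-1/2}$, so its flatness constant (the largest entry modulus times $\sqrt N$) equals $1$. Consequently the matrix $\frac{1}{\sqrt m}A$, where $A=(\phi_j(u_\ell))_{\ell\in\FirstN{m},\,j\in\FirstN{N}}$ and $u_1,\dots,u_m\overset{iid}{\sim}\unif\bigl(\tfrac{1}{2D+1}\setn{0,\dots,2D}^d\bigr)$, is exactly $\sqrt{N/m}$ times the matrix formed from $m$ i.i.d.\ uniformly random rows of $Q$; this is precisely the random matrix to which \cite[Theorem~4.5]{HavivRe2017} applies (the with-/without-replacement sampling distinction being immaterial here).

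Next I would fix $\delta_0\in(0,1)$ to be the universal threshold on the order-$2s$ restricted isometry constant $\delta_{2s}$ under which \cite[Theorem~6.12]{RauhutCompressiveSensingBook} guarantees stable and robust $\ell^1$-recovery of $s$-sparse vectors. Applying \cite[Theorem~4.5]{HavivRe2017} with RIP order $2s$ and target constant $\delta_0$: since $\delta_0$ and the flatness constant are absolute, $\Log{2s}\asymp\Log{s}$, and $\Log{N}\asymp d\log(2D+1)$, the hypothesis $m\geq C\,d\,s\,\Log{s}^2\log(2D+1)$ --- for $C$ chosen large enough --- implies the sample-count condition of that theorem. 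Hence, outside an event of probability at most $N^{-\gamma\Log{s}}=(2D+1)^{-\gamma d\Log{s}}$, the matrix $\frac{1}{\sqrt m}A$ has $\delta_{2s}\leq\delta_0$. (Choosing $D$ sufficiently large absorbs any lower bound on $N$ required in \cite{HavivRe2017}; and in the degenerate range $2s>N$ the same bound on $m$ already forces $m\gtrsim N\Log{N}$, whence a standard matrix-concentration argument yields the same RIP conclusion, so from here on I assume $2s\leq N$.)

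Finally, on that high-probability event, set $B\defeq\frac{1}{\sqrt m}A$. Given $\eta>0$, $x\in\CC^{N}$ and $y=Ax+e$ with $\mnorm{e}_{\ell^2(m)}\leq\eta\sqrt m$, put $y'\defeq\frac{1}{\sqrt m}y$ and $e'\defeq\frac{1}{\sqrt m}e$, so that $y'=Bx+e'$ with $\mnorm{e'}_{\ell^2(m)}\leq\eta$, and observe that the constraint $\mnorm{Az-y}_{\ell^2(m)}\leq\eta\sqrt m$ is identical to $\mnorm{Bz-y'}_{\ell^2(m)}\leq\eta$. Thus $x^\#$ is a minimizer of $\mnorm{z}_{\ell^1(N)}$ subject to $\mnorm{Bz-y'}_{\ell^2(m)}\leq\eta$, and since $\delta_{2s}(B)\leq\delta_0$, \cite[Theorem~6.12]{RauhutCompressiveSensingBook} delivers $\mnorm{x-x^\#}_{\ell^2(N)}\leq\frac{C_1}{\sqrt s}\sigma_s(x)_1+C_2\eta$ with universal $C_1,C_2$, which is exactly the claimed bound. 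I expect the only real friction to be this normalization bookkeeping --- lining up the $\sqrt m$ in the quadratic constraint with the unit noise level expected by the RIP recovery theorem, and converting $\Log{N}$ into $d\log(2D+1)$ simultaneously in the sample count and in the failure-probability exponent --- plus the brief separate handling of $2s>N$; everything substantive is imported wholesale from \cite{HavivRe2017} and \cite{RauhutCompressiveSensingBook}.
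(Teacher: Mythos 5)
Your proposal is correct and follows exactly the route the paper itself indicates (and does not spell out further): subsample the unitary Kronecker-product DFT matrix, invoke \cite[Theorem~4.5]{HavivRe2017} for the RIP of $\frac{1}{\sqrt m}A$ using $\Log{N}\asymp d\log(2D+1)$, and then apply \cite[Theorem~6.12]{RauhutCompressiveSensingBook} after rescaling the quadratic constraint by $\sqrt m$. Your normalization bookkeeping and the identification of the failure probability $N^{-\gamma\Log{s}}=(2D+1)^{-\gamma d\Log{s}}$ are exactly what is needed; the extra remark on $2s>N$ is harmless.
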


\section{Special cases: polynomial bases}%
\label{sec:FourierBasis}

At first sight, it might appear that the term $\mnorm{P}_{\bfun(\Omega) \to \bfun(\Omega)}$
appearing in \Cref{thm:MainAbstractResult} might imply a suboptimal dependence
on the input dimension $d$ in many cases.
In this section, we point out one setting in which this is not the case.
Namely, we consider the Fourier basis $\CalB = \CalT^d \defeq (\ee^{2 \uppi \ii \skpr{k}{\bullet}})_{k \in \Z^d}$
on the torus $\Omega=\T^d\defeq \R^d/\Z^d$ together with the normalized Haar measure $\mu$ on $\T^d$.
In addition, we consider some univariate orthogonal polynomial systems $\CalB=(\gp_n)_{n\in\N_0}$
on $\Omega=\pmone$ together with some (probability) measure $\mu$.

\subsection{Trigonometric polynomials}
\label{sub:FourierBasis1D}

Functions on the torus $\T^d$ can be identified with functions on $\R^d$ which are $1$-periodic
in each coordinate direction, or just functions in $[0,1)^d$.
Whenever we write $L^p(\T^d)$ in this subsection, we mean the $L^p$ space
with respect to the normalized Haar measure $\mu$ on $\T^d$.
If we denote by $[x] \in \R^d / \Z^d$ the equivalence class of $x \in \R^d$,
then $\int_{\T^d}f(x)\dd\mu(x) = \int_{[0,1)^d} f([x]) \dd x$.

We first consider the case $d = 1$ and recall several facts
about the family of de la Vallée Poussin operators and kernels,
taken from \cite[Section~2]{FilbirDeLaValleePoussinKernels}.
For $n,m \in \N_0$ with $m \leq n$ and $k \in \Z$, define
\begin{equation}
  a_k^{(n,m)}
  \defeq \begin{cases}
       1                              & \text{if } \abs{k} \leq n - m, \\
       \frac{n + m + 1 - \abs{k}}{2 m + 1} & \text{if } n - m + 1 \leq \abs{k} \leq n + m, \\
       0                              & \text{otherwise}.
     \end{cases}
  \label{eq:DeLaValleePoussinCoefficients}
\end{equation}
Furthermore, define the \emph{de la Vallée Poussin operator} $P^{(n,m)}$ as
\begin{equation*}
  P^{(n,m)} f
  \defeq \sum_{k \in \Z} a_k^{(n,m)} \widehat{f} (k) e_k
  = f \ast K^{(n,m)},
\end{equation*}
where $K^{(n,m)}\defeq \sum_{k \in \Z} a_k^{(n,m)}e_k$ with $e_k(x) \defeq \ee^{2\uppi \ii kx}$
is the \emph{de la Vallée Poussin kernel} and $\widehat{f}(k) = \skpr{f}{e_k}_{L^2(\T)}$.
Directly from the definition and from \Cref{eq:DeLaValleePoussinCoefficients},
it is then easy to see for $\CalB=\CalT^d$ that
\begin{enumerate}[label={(\roman*)},leftmargin=*,align=left,noitemsep]
  \item $P^{(n,m)}(\Sigma_\ell) \subset \Sigma_\ell$ for all $\ell \in \N$,

  \item $P^{(n,m)} f = f$ for all $f \in V_J$ for $J \defeq J_{n,m} \defeq \setcond{ k \in \Z }{ \abs{k} \leq n - m}$,

  \item $P^{(n,m)} f \in V_{J^\ast}$ for all $f \in L^2(\T)$
        for $J^\ast \defeq J_{n,m}^\ast \defeq \setcond{ k \in \Z }{ \abs{k} \leq n + m}$,

  \item $P^{(n,m)} : \bfun(\T) \to \bfun(\T)$ is well-defined and bounded.
\end{enumerate}
In other words, $P^{(n,m)}$ is a $(1,\ell,J,J^\ast,\tau)$ quasi-projection
for any $\ell \in \N$, $J,J^\ast$ as above, and any $\tau\geq \mnorm{P^{(n,m)}}_{\bfun(\T) \to \bfun(\T)}$.
It remains to estimate the operator norm $\mnorm{P^{(n,m)}}_{\bfun(\T) \to \bfun(\T)}$.
To this end, note that by Young's convolution inequality (see, e.g, \cite[Proposition~(2.39)]{FollandAHAFirstEdition})
we have the estimate $\mnorm{ P^{(n,m)}}_{\bfun(\T) \to \bfun(\T)} \leq \mnorm{ K^{(n,m)}}_{L^1(\T)}$.
Next, \cite[Equation~(3)]{FilbirDeLaValleePoussinKernels} shows with the Fejér kernel
$F_n \defeq \frac{1}{n + 1} \sum_{k=0}^n\sum_{j = - k}^k e_j$ that
\begin{equation*}
  K^{(n,m)}
  = \frac{n + m + 1}{2 m + 1} F_{n + m} - \frac{n - m}{2 m + 1} F_{n - m - 1}.
\end{equation*}
It is well-known (see e.g. \cite[Example~1.2.18]{GrafakosClassicalFourierThirdEdition})
that $F_n \geq 0$ and $\int_{\T} F_n(x) \dd x = 1$.
Since $\abs{a - b} \leq a + b$ for $a,b \geq 0$, this implies
\begin{equation*}
  \mnorm{ P^{(n,m)} }_{\bfun(\T) \to \bfun(\T)}
  \leq \mnorm{ K^{(n,m)} }_{L^1(\T)}
  \leq \frac{n + m + 1}{2 m + 1} + \frac{n - m}{2 m + 1}
  =    \frac{2 n + 1}{2 m +1}
  .
\end{equation*}

Let us now consider the multivariate case. 
For given $d,M \in \N$, consider the operator
\begin{equation*}
  \CalP_{M}^d : \quad
  f \mapsto f \ast \CalK_M^d
  \quad \text{where} \quad
  \CalK_M^d \defeq K^{((d+1)M, d M)} \otimes \ldots \otimes K^{((d+1)M, d M)}
\end{equation*}
is the $d$-fold tensor product of the kernel $K^{((d+1)M, d M)}$ from the univariate case, i.e.,
\begin{equation*}
(f_1\otimes\ldots\otimes f_d)(x_1,\ldots,x_d) = \prod_{j=1}^d f_j(x_j),\quad x=(x_1,\ldots,x_d)\in \R^d.
\end{equation*}
For the sequence $a_{(k_1,\ldots,k_d)} \defeq a_{k_1}^{((d+1)M, d M)} \cdot \ldots \cdot a_{k_d}^{((d+1)M, d M)}$,
it is then not hard to see that in terms of Fourier coefficients, we have
\begin{equation*}
  \CalP_M^d f
  = \sum_{k \in \Z^d} a_k  \widehat{f}(k)  e_k
  ,
\end{equation*}
and this implies that $\CalP_M^d$ has the following properties:
\begin{enumerate}[label={(\roman*)},leftmargin=*,align=left,noitemsep]
  \item{$\CalP_M^d (\Sigma_\ell) \subset \Sigma_\ell$ for arbitrary $\ell \in \N$,}
  \item{$\CalP_M^d f = f$ for all $f \in V_{J_M^d}$, where
        \begin{equation*}
          J_M^d
          \defeq \setcond{ k \in \Z^d }{ \mnorm{k}_{\ell^\infty(\Z^d)} \leq (d+1) M - d M }
          = [-M,M]^d\cap \Z^d,
        \end{equation*}
}
  \item{$\CalP_M^d f \in V_{J_M^{d,\ast}}$ for all $f \in L^2(\T^d)$, where
        \begin{equation*}
          J_M^{d,\ast}
          \defeq \setcond{ k \in \Z^d}{ \mnorm{k}_{\ell^\infty(\Z^d)} \leq (2d + 1) M}
          ,
       \end{equation*}
}
  \item{$\CalP_M^d : \bfun(\T^d) \to \bfun(\T^d)$ is well-defined and bounded. 
	  In fact, we have for any $p \in [1,\infty]$ that
        \begin{align*}
          \mnorm{ \CalP_M^d }_{L^p(\T^d) \to L^p(\T^d)}
          & \leq \mnorm{ \CalK_M^d }_{L^1(\T^d)}
            =    \mnorm{ K^{((d+1)M, d M)} }_{L^1(\T)}^d \\
          & \leq \lr{\frac{2 (d+1) M + 1}{2 d M + 1}}^d
            =    \lr{ 1 + \frac{2 M}{2 d M + 1} }^d \\
          & \leq \lr{ 1 + \frac{1}{d} }^d
            \leq \ee .
        \end{align*}
}
\end{enumerate}
By applying \Cref{thm:MainAbstractResult} to the setting of the Fourier basis,
with $P = \CalP_M^d$, we thus obtain the following result:

\begin{theorem}\label{thm:GeneralFourierBound}
  There exist universal constants $C, \widetilde{C},\gamma > 0$ with the following property:
  For any $d \in \N$, and any quasi-normed space $\CalF \hookrightarrow \bfun(\T^d)$,
  and arbitrary $n,M \in \N$ with $M \geq 3$, we have
  \begin{equation*}
  \begin{split}
     &\varrho_{\ceil{Cd \Log{d} n\Log{n}^3 \log(M)}} (\CalF)_{L^2(\T^d)}\\ 
     &~~~~\leq \widetilde{C} \lr{
                   \sigma_n(\CalF;\CalT^d)_{\bfun(\T^d)}
                   + E_{[-M,M]^d\cap \Z^d} (\CalF;\CalT^d)_{\bfun(\T^d)}
                 }.
	\end{split}  
  \end{equation*}
  In fact, if 
  \begin{align*}
    m &\defeq \ceil{C\cdot d \cdot \Log{d} \cdot n \cdot \Log{n}^3\cdot \log(M)},\\
    N &\defeq (2(2d+1)M+1)^d,
  \end{align*}  
and if the points
  $t_1,\ldots,t_m \overset{iid}{\sim} \mu$ are sampled uniformly,
  then with probability at least $1 - N^{-\gamma\Log{n}^3}$ with respect to the choice of the sampling points,
  the reconstruction operator $R_\eta$ defined in \Cref{eq:ReconstructionOperator,eq:MinimizationProblem,eq:eta}
  satisfies
  \begin{equation*}
	\begin{split}
      &\mnorm{ f - R_\eta(f(t_1),\ldots,f(t_m))}_{L^2(\T^d)}\\
    &~~~\leq \widetilde{C}
         \lr{
                 \sigma_n(\CalF,\CalT^d)_{\bfun(\T^d)}
                 + E_{[-M,M]^d\cap\Z^d}(\CalF; \CalT^d)_{\bfun(\T^d)}               }
	\end{split}  
  \end{equation*}
 for all $f\in B_\CalF$.
\end{theorem}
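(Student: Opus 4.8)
The plan is to apply \Cref{thm:MainAbstractResult} directly, with the dictionary $\CalB=\CalT^d$, the measure $\mu$ equal to the normalized Haar measure on $\T^d$, and the quasi-projection $P=\CalP_M^d$ constructed above. First I would record that $\CalT^d$ is a bounded orthonormal system with constant $K=1$, since $\abs{\ee^{2\uppi\ii\skpr{k}{x}}}=1$ for all $k\in\Z^d$ and $x\in\T^d$; thus \Cref{assu:StandingAssumptions} holds. Next I would collect the four properties of $\CalP_M^d$ verified immediately before the theorem statement: it maps $\Sigma_\ell$ into $\Sigma_\ell$ (so $\kappa=1$ works for every $\ell$, in particular for the value $n$ we pick), it fixes $V_J$ for $J=J_M^d=[-M,M]^d\cap\Z^d$, it has range in $V_{J^\ast}$ for $J^\ast=J_M^{d,\ast}=\setcond{k\in\Z^d}{\mnorm{k}_{\ell^\infty(\Z^d)}\le(2d+1)M}$, and it is bounded on $\bfun(\T^d)$ with $\mnorm{\CalP_M^d}_{\bfun(\T^d)\to\bfun(\T^d)}\le\ee$. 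Hence $\CalP_M^d$ is a $(1,n,J_M^d,J_M^{d,\ast},\tau)$ quasi-projection with $\tau\le\ee$, and $\CalF$ is assumed to embed compactly into $\bfun(\T^d)$, so the hypotheses of \Cref{thm:MainAbstractResult} are met.

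With these identifications in place, I would simply plug into the conclusion of \Cref{thm:MainAbstractResult}. Since $\kappa=1$, the factor $\kappa\Log{\kappa}^3=1\cdot\Log{1}^3=(\log 2)^3$ is an absolute constant, so the number of sample points becomes $m=\ceil{C'\cdot K^2\cdot n\cdot\Log{n}^3\cdot\Log{N}}$ for a new universal constant $C'$; since $K=1$ this is $\ceil{C'\cdot n\cdot\Log{n}^3\cdot\Log{N}}$. For the cardinality, $N=\card{J_M^{d,\ast}}=(2(2d+1)M+1)^d$, which is exactly the value of $N$ claimed in the theorem. The remaining point is to bound $\Log{N}=\log\big((2(2d+1)M+1)^d+1\big)$ by something of the form $C''\,d\,\Log{d}\,\log(M)$: here one uses $\log\big((2(2d+1)M+1)^d+1\big)\lesssim d\log((2d+1)M)\lesssim d(\log(d)+\log(M))\lesssim d\,\Log{d}\,\log(M)$, valid because $M\ge 3$ (so $\log(M)$ is bounded below away from $0$, and also $\log(M)\ge 1$ up to a constant, letting one absorb the additive $\log(d)$ into a product). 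Combining, $m\le\ceil{C\cdot d\cdot\Log{d}\cdot n\cdot\Log{n}^3\cdot\log(M)}$, which is the index appearing on the left-hand side.

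For the error bound, \Cref{thm:MainAbstractResult} gives, with probability at least $1-N^{-\gamma\Log{n}^3}$, that $\sup_{f\in B_\CalF}\mnorm{f-R_\eta(f(t_1),\ldots,f(t_m))}_{L^2(\mu)}\le\widetilde C\eta$ where $\eta=\tau\sigma_n(\CalF;\CalT^d)_{\bfun(\T^d)}+(1+\tau)E_{J_M^d}(\CalF;\CalT^d)_{\bfun(\T^d)}$. Since $\tau\le\ee$ and $1+\tau\le 1+\ee$, we have $\eta\le(1+\ee)\big(\sigma_n(\CalF;\CalT^d)_{\bfun(\T^d)}+E_{[-M,M]^d\cap\Z^d}(\CalF;\CalT^d)_{\bfun(\T^d)}\big)$, so absorbing the constant $(1+\ee)$ into $\widetilde C$ yields the stated pointwise estimate for all $f\in B_\CalF$ on the good event, and the fact that $R_\eta(f(t_1),\ldots,f(t_m))\in V_{J^\ast}$ is inherited verbatim from \Cref{thm:MainAbstractResult}. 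Taking the infimum over reconstruction maps and sampling points (the good event has positive probability, in fact probability close to $1$, so in particular there exist sampling points for which the bound holds) turns the worst-case error estimate into the claimed bound on $\varrho_{\ceil{Cd\Log{d}n\Log{n}^3\log(M)}}(\CalF)_{L^2(\T^d)}$.

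The only mildly delicate step is the logarithmic bookkeeping for $\Log{N}$ — making sure the passage from $d\log((2d+1)M)$ to $d\,\Log{d}\,\log(M)$ really holds with a universal constant under the hypothesis $M\ge 3$, and that combining $\Log{\kappa}^3$ (with $\kappa=1$) into the absolute constants is legitimate. None of this is a genuine obstacle; the substance of the argument is entirely contained in \Cref{thm:MainAbstractResult} and in the previously verified quasi-projection properties of $\CalP_M^d$, so the proof is essentially a matter of specialization and constant-chasing.
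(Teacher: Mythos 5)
Your proposal is correct and follows exactly the paper's own route: specializing \Cref{thm:MainAbstractResult} with $K=1$, $\kappa=1$, $P=\CalP_M^d$, $J=J_M^d$, $J^\ast=J_M^{d,\ast}$, and then bounding $\Log{N}\le d\log(3(2d+1)M)\le 8d\Log{d}\log(M)$ using $M\ge 3$. The paper's proof is precisely this specialization plus the same logarithmic bookkeeping, so there is nothing to add.
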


\begin{proof}
  This follows from the above considerations by an application of \Cref{thm:MainAbstractResult}
  with $K = 1$, $\kappa = 1$, $P = \CalP_{M}^d$, $J = J_M^d$, and $J^\ast = J_M^{d,\ast}$,
  which then implies $N = \card{J^\ast} = (2(2d+1)M+1)^d$ and hence
  \begin{align*}
    \Log{N}
    & \leq d \log\lr{3 (2d+1) M} \\
    & \leq d \lr{\log(12d) + \log(M)} \\
    & \leq 2 d \log(12d)\cdot\log(M)\\
    & \leq 8 d \Log{d}\cdot\log(M).
    \qedhere
  \end{align*}
\end{proof}
From \Cref{thm:improved-fourier} for $D=(2d+1)M$,
we obtain the following refinement of \Cref{thm:GeneralFourierBound}.

\begin{theorem}\label{thm:improvedGeneralFourierBound}
  There exist universal constants $C, \widetilde{C},\gamma > 0$ with the following property:
  For any $d \in \N$, and any quasi-normed space $\CalF \hookrightarrow \bfun(\T^d)$,
  and arbitrary $n,M \in \N$ with $M \geq 3$, we have
  \begin{equation*}
  \begin{split}
     &\varrho_{\ceil{C d\Log{d} n\Log{n}^2\log(M)}} (\CalF)_{L^2(\T^d)}\\ 
     &~~~~\leq \widetilde{C} \lr{
                   \sigma_n(\CalF;\CalT^d)_{\bfun(\T^d)}
                   + E_{[-M,M]^d\cap \Z^d} (\CalF;\CalT^d)_{\bfun(\T^d)}
                 }.
	\end{split}  
  \end{equation*}
  In fact, if
  \begin{align*}
  m &\defeq \ceil{C\cdot d\cdot\Log{d}\cdot n \cdot\Log{n}^2\cdot\log(M)},\\
  N &\defeq (2(2d+1)M+1)^d,
  \end{align*}
  and if the matrix $A$ is constructed as in \Cref{thm:improved-fourier},
  then with probability at least $1-N^{-\gamma\Log{n}}$ with respect to the choice
  of the points $u_1,\ldots,u_m$ in \Cref{thm:improved-fourier},
  the reconstruction operator $R_\eta$ defined in \Cref{eq:ReconstructionOperator,eq:MinimizationProblem,eq:eta}
  satisfies
  \begin{equation*}
	\begin{split}
      &\mnorm{ f - R_\eta(f(t_1),\ldots,f(t_m))}_{L^2(\T^d)}\\
    &~~~\leq \widetilde{C}
         \lr{
                 \sigma_n(\CalF,\CalT^d)_{\bfun(\T^d)}
                 + E_{[-M,M]^d\cap\Z^d}(\CalF; \CalT^d)_{\bfun(\T^d)}               }
	\end{split}  
  \end{equation*}
for all $f\in B_\CalF$.
\end{theorem}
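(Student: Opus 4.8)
The plan is to mirror the proof of \Cref{thm:GeneralFourierBound} almost verbatim, replacing the generic compressed-sensing input \Cref{thm:CSResult} by the sharper Fourier-specific statement \Cref{thm:improved-fourier}, which saves one power of $\Log{n}$. Concretely, I would set $D\defeq(2d+1)M$, keep $P=\CalP_M^d$, $\kappa=1$, $K=1$, $J=[-M,M]^d\cap\Z^d$ and $J^\ast=J_M^{d,\ast}$ exactly as in the proof of \Cref{thm:GeneralFourierBound}, and note that then $J^\ast=[-D,D]^d\cap\Z^d$ has cardinality $N=(2D+1)^d=(2(2d+1)M+1)^d$, while $\tau=\mnorm{\CalP_M^d}_{\bfun(\T^d)\to\bfun(\T^d)}\leq\ee$. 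I take $\eta$ as in \eqref{eq:eta}, so that $\eta\leq\ee\,\sigma_n(\CalF;\CalT^d)_{\bfun(\T^d)}+(1+\ee)\,E_{[-M,M]^d\cap\Z^d}(\CalF;\CalT^d)_{\bfun(\T^d)}$, which is already $\lesssim\sigma_n(\CalF;\CalT^d)_{\bfun(\T^d)}+E_{[-M,M]^d\cap\Z^d}(\CalF;\CalT^d)_{\bfun(\T^d)}$; and I let the sampling nodes $t_1,\ldots,t_m$ be the random grid points $u_1,\ldots,u_m$ of \Cref{thm:improved-fourier} for this $D$, which lie in $\T^d$ and are therefore admissible in the definition of $\varrho_m$.

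The core would be a re-run of the proof of \Cref{thm:restrictedbestnterm} with \Cref{thm:improved-fourier} (for $s=n$, $D=(2d+1)M$) in place of \Cref{thm:CSResult}. Conditioning on the event of probability at least $1-(2D+1)^{-\gamma\Log{n}d}$ furnished by \Cref{thm:improved-fourier} — an event depending only on $u_1,\ldots,u_m$ — I would show the recovery bound holds simultaneously for all $f\in B_\CalF$. Fix such an $f$. By \Cref{lem:conn} applied to $\CalP_M^d$ (with $\kappa=1$), $\sigma_{n,J^\ast}(f;\CalT^d)_{\bfun(\T^d)}\leq\sigma_{n,J^\ast}(\CalF;\CalT^d)_{\bfun(\T^d)}\leq\eta$; since $\Sigma_n\cap V_{J^\ast}$ is a closed subset of the finite-dimensional space $V_{J^\ast}$, I can pick $f^\ast\in\Sigma_n\cap V_{J^\ast}$ with $\mnorm{f-f^\ast}_{\bfun(\T^d)}\leq\eta$. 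Writing $f^\ast=\sum_j x_j\phi_j$ along an enumeration $(\phi_j)$ of $(\ee^{2\uppi\ii\skpr{k}{\bullet}})_{k\in J^\ast}$, the vector $x$ is $n$-sparse, hence $\sigma_n(x)_1=0$. For $y=(f(u_\ell))_\ell$ and $e=y-(f^\ast(u_\ell))_\ell$ one has $Ax+e=y$ and $\mnorm{e}_{\ell^2(m)}\leq\mnorm{e}_{\ell^\infty(m)}\sqrt{m}\leq\eta\sqrt{m}$, the last step because the $\bfun(\T^d)$-bound on $f-f^\ast$ is pointwise, in particular at the grid nodes; and crucially the matrix $A$ of \Cref{def:reconstruction-operator} for the nodes $u_\ell$ and the index set $J^\ast$ is exactly the matrix $A$ of \Cref{thm:improved-fourier}. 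Thus \Cref{thm:improved-fourier} yields $\mnorm{x-x^\#}_{\ell^2}\leq\frac{C_1}{\sqrt{n}}\sigma_n(x)_1+C_2\eta=C_2\eta$ for the minimizer $x^\#$ defining $R_\eta$. Using that $\mu$ is a probability measure and that $(\ee^{2\uppi\ii\skpr{k}{\bullet}})_{k\in J^\ast}$ is $L^2(\mu)$-orthonormal (Parseval),
\[
\mnorm{f-R_\eta(y)}_{L^2(\mu)}\leq\mnorm{f-f^\ast}_{L^2(\mu)}+\mnorm{f^\ast-R_\eta(y)}_{L^2(\mu)}\leq\eta+\mnorm{x-x^\#}_{\ell^2}\leq(1+C_2)\,\eta,
\]
while $R_\eta(y)\in V_{J^\ast}$ holds by construction.

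To finish I would bound $\eta$ and check the parameter counts, exactly as for \Cref{thm:GeneralFourierBound}. The sample requirement of \Cref{thm:improved-fourier} with $s=n$ reads $m\geq C_0\,d\,n\,\Log{n}^2\log(2(2d+1)M+1)$; since $2(2d+1)M+1\leq 7dM$ for $d\geq1$ and $\log(M)\geq\log(3)>1$ for $M\geq3$, one has $\log(2(2d+1)M+1)\leq\log(7)+\Log{d}+\log(M)\lesssim\Log{d}\log(M)$, so that $m\defeq\ceil{C\,d\,\Log{d}\,n\,\Log{n}^2\log(M)}$ meets the requirement once $C$ is chosen large enough. The failure probability is $(2D+1)^{-\gamma\Log{n}d}=\bigl((2D+1)^d\bigr)^{-\gamma\Log{n}}=N^{-\gamma\Log{n}}$. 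Finally I would take the infimum over reconstruction maps $R:\CC^m\to L^2(\T^d)$ in the definition of $\varrho_m(\CalF)_{L^2(\T^d)}$ — the map $R_\eta$ applied to the random grid nodes attains the above bound on the good event — and recall $\eta\lesssim\sigma_n(\CalF;\CalT^d)_{\bfun(\T^d)}+E_{[-M,M]^d\cap\Z^d}(\CalF;\CalT^d)_{\bfun(\T^d)}$ to obtain the stated estimate.

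This argument introduces no genuinely new difficulty beyond the proof of \Cref{thm:GeneralFourierBound}; the one point that requires care is the simultaneous use of two measures: the discrete uniform measure on the grid $\frac{1}{2D+1}\setn{0,\ldots,2D}^d$, with respect to which \Cref{thm:improved-fourier} — equivalently the DFT restricted-isometry estimate of \cite{HavivRe2017} — delivers robust recovery of the Fourier coefficient vector, and the Haar measure $\mu$ on $\T^d$, in which the error is measured and Parseval is applied. These are compatible precisely because $(\ee^{2\uppi\ii\skpr{k}{\bullet}})_{k\in J^\ast}$ is orthonormal for both (discrete DFT orthogonality versus continuous Fourier orthogonality) and because the relevant $\bfun(\T^d)$-estimate on $f-f^\ast$ is pointwise, hence holds at the grid nodes. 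A minor bookkeeping point is to make sure the \enquote{$D$ sufficiently large} hypothesis of \Cref{thm:improved-fourier} is satisfied; this is automatic since $D=(2d+1)M\geq9$ for $M\geq3$, possibly after enlarging the absolute threshold on $M$.
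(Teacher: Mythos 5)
Your proposal is correct and follows exactly the route the paper intends: the paper derives \Cref{thm:improvedGeneralFourierBound} by invoking \Cref{thm:improved-fourier} with $D=(2d+1)M$ in place of \Cref{thm:CSResult} inside the argument of \Cref{thm:GeneralFourierBound}, keeping $P=\CalP_M^d$, $\kappa=1$, $J=[-M,M]^d\cap\Z^d$, $J^\ast=J_M^{d,\ast}$ and the same bound $\Log{N}\lesssim d\Log{d}\log(M)$. Your explicit treatment of the two measures (discrete grid versus Haar) and of the identification of the matrix $A$ is a correct elaboration of what the paper leaves implicit.
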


\subsection{Algebraic polynomials}
\label{chap:algebraic-polynomials}

In this subsection we investigate the case of orthogonal polynomial systems on $\Omega=\pmone$.
Let $\mu$ be some probability measure on $\Omega$. 
By $\CalB=(\gp_n)_{n\in\N_0}$ we denote the unique family of polynomials for which $\deg(\gp_n) = n$,
the leading coefficient of $\gp_n$ is positive, and
\begin{equation*}
  \int_\Omega \gp_m(x)\gp_n(x)\dd\mu(x)
  = \begin{cases}
      1 & \text{if }n=m, \\
      0 & \text{else},
    \end{cases}
\end{equation*}
for all $n,m\in\N_0$, see \cite[Section~2.2]{Szego1975}.
Note that the $\bfun(\pmone)$-norm is nonetheless the \enquote{pure} uniform norm without any weight.

Here we are interested in two particular families of Jacobi polynomials,
namely the orthonormalized \emph{Chebyshev polynomials of the first kind} ($\alpha=-\frac{1}{2}$)
and the orthonormalized \emph{Legendre polynomials} ($\alpha=0$).
For these systems of polynomials, the measure $\mu = \mu_\alpha$
takes the form $\dd\mu_\alpha(x) = c_\alpha(1-x^2)^\alpha\dd x$
where $c_\alpha=\lr{\int_{-1}^1(1-x^2)^\alpha\dd x}^{-1}$.
The inner product of $L^2(\mu_\alpha)$ is then given by
  \begin{equation*}
    \skpr{f}{g}_{L^2(\mu_\alpha)}
    = \int_{-1}^1 f(x)\overline{g(x)}\dd\mu_\alpha(x)
    = \int_{-1}^1 f(x)\overline{g(x)}v_\alpha(x)\dd x.
\end{equation*}
The occuring weight function is $v_\alpha:\pmone\to\R$,
$v_\alpha(x)\defeq c_\alpha(1-x^2)^\alpha$ and the resulting orthonormal polynomial system shall be denoted by
by $\CalB_\alpha=(\gp_n^\alpha)_{n\in\N_0}$.
Note that the factor $c_\alpha$ in the weight function is usually omitted in the literature
on orthogonal polynomials \cite[Chapter~4]{Lebedev1972} but in the present paper,
we restrict ourselves to probability measures, see \Cref{assu:StandingAssumptions}.

Similarly to \cite[p.~97]{Lebedev1972}, $\gp^{-\frac{1}{2}}_0(x)=1$
and $\gp^{-\frac{1}{2}}_n(x)=\sqrt{2}\cos(n \arccos(x))$ for $x\in \pmone$ and $n\in\N$.
Thus the Chebyshev polynomials $\gp^{-\frac{1}{2}}_n$ fit in our framework of \Cref{assu:StandingAssumptions} as
\begin{equation*}
\mnorm{\gp^{-\frac{1}{2}}_n}_{\bfun(\pmone)}\leq\sqrt{2}
\end{equation*}
for all $n\in\N_0$.
However, the Legendre polynomials $\gp^0_n$ satisfy
\begin{equation*}
\mnorm{\gp^0_n}_{\bfun(\pmone)}=(2n+1)^{\frac{1}{2}},
\end{equation*}
see \cite[(4.4.1) and (4.5.2)]{Lebedev1972}.
Still, we will be able to formulate a version of \Cref{thm:MainAbstractResult}
for both Chebyshev and Legendre polynomials in \Cref{thm:multivariate_algebraic}.
The lack of $\bfun$-boundedness of the Legendre polynomials uniformly in $n\in\N$
can be overcome using a change-of-measure technique from \cite{RauhutWard2012},
see \Cref{sect:unbounded}.

Filbir and Themistoclakis \cite[Section~3]{FilbirDeLaValleePoussinKernels} define analogs
of the trigonometric de la Vallée Poussin operators for the algebraic setting.
Likewise, they obtain operators
\begin{equation}\label{eq:DLP}
  P^{(n,m)} f(x)
  \defeq \sum_{k=0}^{n+m}a_k^{(n,m)}\widehat{f}(k)\gp_k(x),	
\end{equation}
for some specific choice of the coefficients $a_k^{(n,m)}$.
Note that unlike in our setting, Filbir and Themistoclakis \cite{FilbirDeLaValleePoussinKernels}
require the polynomials to take the value $1$ at $1$.
However, our different scaling does not affect the overall structure
of the construction of $P^{(n,m)}$ but only changes the coefficients $a_k^{(n,m)}$.
In particular, using \eqref{eq:DLP} and \cite[Proposition~3.2]{FilbirDeLaValleePoussinKernels},
the following observations remain unchanged:
\begin{enumerate}[label={(\roman*)},leftmargin=*,align=left,noitemsep]
\item{$P^{(n,m)}(\Sigma_\ell) \subset \Sigma_\ell$ for all $\ell \in \N$,}
\item{$P^{(n,m)} f = f$ for all $f \in V_J$ where $J \defeq J_{n,m} \defeq \setn{0,\ldots,n-m}$,}
\item{$P^{(n,m)} f \in V_{J^\ast}$ for all $f \in L^2(\mu)$ where 
\begin{equation*}
J^\ast \defeq J_{n,m}^\ast \defeq \setn{0,\ldots,n+m}.
\end{equation*}}
\item{Under some additional assumptions on the orthogonal polynomial system $\CalB$,
    the operator $P^{(n,m)} : \bfun(\pmone) \to \bfun(\pmone)$ is well-defined and bounded,
    even uniformly in $n$ and $m$.
    The uniform bound $\tau$ can be given in terms of the Haar weights of $\CalB$,
  see \cite[Theorem~3.4]{FilbirDeLaValleePoussinKernels}.}
\end{enumerate}
In other words, $P^{(n,m)}$ is a $(1,\ell,J,J^\ast,\tau)$ quasi-projection for any $\ell \in \N$ and for $J,J^\ast,\tau$ as above.

For Chebyshev polynomials and Legendre polynomials, explicit bounds on the operator norm of $P^{(n,m)}$ are given
in \cite[p.~309]{FilbirDeLaValleePoussinKernels}:
\begin{enumerate}[label={(\roman*)},leftmargin=*,align=left,noitemsep]
\item{For $\alpha=-\frac{1}{2}$ (Chebyshev polynomials), we have
      \begin{equation*}
      \mnorm{P^{(n,m)}}_{\bfun(\pmone)\to\bfun(\pmone)}
      \leq\frac{2n+1}{2m+1}.
      \end{equation*}}

\item{For $\alpha=0$ (Legendre polynomials), we have
      \begin{equation*}
        \mnorm{P^{(n,m)}}_{\bfun(\pmone)\to\bfun(\pmone)}
        \leq\frac{(n+1)^2}{(m+1)^2}.
      \end{equation*}}
\end{enumerate}

For specializing \Cref{thm:MainAbstractResult} to Chebyshev and Legendre polynomials,
we aim for $(1,\ell,J,J^\ast,\tau)$ quasi-projections $P$ with 
$J=\setn{0,\ldots,M}$ where $M\in\N$ is chosen in advance.
This is achieved by taking the linear operator $P^{(2M,M)}$ for $P$,
and the operator norms $\mnorm{P^{(2M,M)}}_{\bfun(\pmone)\to \bfun(\pmone)}$
are uniformly bounded above by $\tau=2$ in the case of Chebyshev polynomials,
and by $\tau=4$ in the case of Legendre polynomials. 

\begin{theorem}\label{thm:multivariate_algebraic}
  Let $\CalF$ be a quasi-normed space with $\CalF \hookrightarrow \bfun(\pmone)$.
  Let further $\CalB=\CalB_\alpha$ for $\alpha\in\setn{-\frac{1}{2},0}$.
  Then there are absolute constants $C,\widetilde{C}>0$ such that for arbitrary $n,M \in \N$ with $M \geq 3$, we have
  \begin{equation*}
  \begin{split}
     &\varrho_{\ceil{C n \Log{n}^3\log(M)}} (\CalF)_{L^2(\mu_\alpha)}\\ 
     &~~~~\leq \widetilde{C} \lr{
                   \sigma_n(\CalF;\CalB_\alpha)_{\bfun(\pmone)}
                   + E_{\setn{0,\ldots,M}} (\CalF;\CalB_\alpha)_{\bfun(\pmone)}
                 }.
	\end{split}  
  \end{equation*}
\end{theorem}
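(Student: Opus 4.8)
The plan is to reduce everything to \Cref{thm:restrictedbestnterm} together with \Cref{lem:conn}, using the de la Vallée Poussin operator $P^{(2M,M)}$ for the system $\CalB_\alpha$ as the quasi-projection. From the facts recorded just before the theorem, $P^{(2M,M)}$ is a $(1,n,J,J^\ast,\tau)$ quasi-projection with $J=\setn{0,\ldots,M}$, $J^\ast=\setn{0,\ldots,3M}$, and operator norm $\tau\leq 2$ when $\alpha=-\frac12$ (from $\frac{4M+1}{2M+1}<2$) and $\tau\leq 4$ when $\alpha=0$ (from $\frac{(2M+1)^2}{(M+1)^2}<4$); with $N\defeq\card{J^\ast}=3M+1$ one has $\Log{N}=\log(3M+2)\leq 3\log(M)$ for $M\geq 3$. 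Applying \Cref{lem:conn} pointwise and taking the supremum over $B_\CalF$ then yields, in both cases,
\begin{equation*}
\sigma_{n,\setn{0,\ldots,3M}}(\CalF;\CalB_\alpha)_{\bfun(\pmone)}\lesssim E_{\setn{0,\ldots,M}}(\CalF;\CalB_\alpha)_{\bfun(\pmone)}+\sigma_n(\CalF;\CalB_\alpha)_{\bfun(\pmone)}
\end{equation*}
with an absolute implied constant.

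For $\alpha=-\frac12$ the system $\CalB_{-1/2}$ is a bounded orthonormal system with $K=\sqrt 2$, so I would simply invoke \Cref{thm:restrictedbestnterm} (equivalently \Cref{thm:MainAbstractResult} and \Cref{Cor:general_sampling}) with $\eta\defeq\sigma_{n,\setn{0,\ldots,3M}}(\CalF;\CalB_{-1/2})_{\bfun(\pmone)}$, $J^\ast=\setn{0,\ldots,3M}$, and $m=\ceil{C K^2 n\Log{n}^3\Log{N}}\leq\ceil{C'n\Log{n}^3\log(M)}$. As the success probability is positive, sampling points realizing the error bound $\widetilde C\eta$ exist; combining this with the display above and monotonicity of $\varrho$ in its index gives the claim for Chebyshev.

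For $\alpha=0$ the polynomials $\gp_n^0$ are not $\bfun$-bounded uniformly in $n$, so \Cref{thm:restrictedbestnterm} is not directly applicable, and this is where the real work lies. Following the change-of-measure idea of \cite{RauhutWard2012} (see \Cref{sect:unbounded}), I would precondition: put $w(x)\defeq\sqrt{\pi/2}\,(1-x^2)^{1/4}\in(0,\sqrt{\pi/2}\,]$ and $\tilde\gp_n\defeq w\cdot\gp_n^0$. Orthonormality of $(\tilde\gp_n)_n$ in $L^2(\mu_{-1/2})$ is a direct computation, and its uniform $\bfun$-boundedness (with an absolute constant $K_0$) is the classical estimate $\sup_{x\in\pmone}(1-x^2)^{1/4}\abs{\gp_n^0(x)}\lesssim 1$. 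Multiplication by $w$ is a bijection carrying $\CalB_0$ to $\tilde\CalB\defeq(\tilde\gp_n)_n$ — hence $\Sigma_n$ and $V_J$ onto their $\tilde\CalB$-counterparts — which is an isometry from $L^2(\mu_0)$ onto $L^2(\mu_{-1/2})$, satisfies $\skpr{wf}{\tilde\gp_k}_{L^2(\mu_{-1/2})}=\skpr{f}{\gp_k^0}_{L^2(\mu_0)}$, and does not increase $\bfun(\pmone)$-distances beyond the factor $\sqrt{\pi/2}$. Writing $\tilde\CalF\defeq w\CalF$ with $\mnorm{wf}_{\tilde\CalF}\defeq\mnorm{f}_\CalF$ (so that $\tilde\CalF\hookrightarrow\bfun(\pmone)$ and $\sigma_{n,\setn{0,\ldots,3M}}(\tilde\CalF;\tilde\CalB)_{\bfun(\pmone)}\leq\sqrt{\pi/2}\,\sigma_{n,\setn{0,\ldots,3M}}(\CalF;\CalB_0)_{\bfun(\pmone)}$), I would apply \Cref{thm:restrictedbestnterm} to the bounded orthonormal system $\tilde\CalB$, the space $\tilde\CalF$, and $J^\ast=\setn{0,\ldots,3M}$, and then transport the recovery guarantee back through $w^{-1}$: the reconstruction map $y\mapsto w^{-1}R_\eta(w(t_1)y_1,\ldots,w(t_m)y_m)$ acts on the honest samples $f(t_\ell)$, lands in $V_{J^\ast}(\CalB_0)$, and by the $L^2$-isometry together with $\tilde f(t_\ell)=w(t_\ell)f(t_\ell)$ has $L^2(\mu_0)$-error at most $\widetilde C\eta$. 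Feeding in \Cref{lem:conn} from the first paragraph then closes the argument, again with $m\leq\ceil{C'n\Log{n}^3\log(M)}$.

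The main obstacle is clearly the Legendre case. The two delicate points there are: (i) establishing that the preconditioned family is a bounded orthonormal system, which rests on the Szegő-type pointwise estimate for Legendre polynomials; and (ii) keeping careful track that the weight $w\leq 1$ is only ever used to \emph{multiply} — never to divide — the $\bfun$-approximation error, while acting as an $L^2(\mu_0)$–$L^2(\mu_{-1/2})$ isometry, so that the \emph{unweighted} quantities $\sigma_n(\CalF;\CalB_0)_{\bfun(\pmone)}$ and $E_{\setn{0,\ldots,M}}(\CalF;\CalB_0)_{\bfun(\pmone)}$ on the right-hand side genuinely control the $L^2$-error. It is worth emphasising that \Cref{lem:conn} is applied to the honest system $\CalB_0$ using the fact recorded above from \cite{FilbirDeLaValleePoussinKernels} that the \emph{Legendre} de la Vallée Poussin operator $P^{(2M,M)}$ is bounded on the ordinary (unweighted) $\bfun(\pmone)$ with operator norm at most $4$; this is precisely what keeps $\tau$ absolute despite the $\gp_n^0$ failing to be uniformly $\bfun$-bounded.
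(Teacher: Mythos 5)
Your proposal is correct and follows essentially the same route as the paper: the Chebyshev case is \Cref{thm:restrictedbestnterm} (equivalently \Cref{thm:MainAbstractResult}) applied directly to the bounded orthonormal system $\CalB_{-1/2}$ together with \Cref{lem:conn} for the de la Vallée Poussin quasi-projection $P^{(2M,M)}$, and the Legendre case uses the Rauhut--Ward preconditioning exactly as in the paper's \Cref{thm:weighted-legendre}, the only (harmless) difference being that you apply \Cref{lem:conn} in the unweighted $\bfun(\pmone)$ before transferring through the weight, whereas the paper transfers first and applies \Cref{lem:conn} in $\bfun_w(\pmone)$. One cosmetic slip: $\sup w=\sqrt{\uppi/2}>1$, so the parenthetical claim ``$w\leq 1$'' is false, but this does not affect the argument since the bound you actually use is the correct one, namely that multiplication by $w$ inflates $\bfun(\pmone)$-norms by at most the absolute factor $\sqrt{\uppi/2}$.
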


\Cref{thm:multivariate_algebraic} is a direct consequence of \Cref{thm:MainAbstractResult}
together with the above discussion of the de la Vallée Poussin quasi-projections
in case of Chebyshev polynomials of the first kind, i.e., $\alpha=-\frac{1}{2}$.
For Legendre polynomials, i.e., $\alpha=0$, \Cref{thm:MainAbstractResult}
is not directly applicable since those systems are not bounded in $\bfun(\pmone)$
and so \Cref{assu:StandingAssumptions} is not fulfilled.
In these cases, we utilize the preconditioning strategy introduced by Rauhut and Ward \cite[Theorem~2.1]{RauhutWard2012}.
The proof is then a direct consequence of \Cref{thm:weighted-legendre} together with \Cref{lem:conn}
for $J \defeq \setn{0,\ldots,M}$ and $J^\ast \defeq\setn{0,\ldots,2M}$.
For details we refer to \Cref{sect:unbounded}.

\section{Specific applications}%
\label{sec:SpecificApplications}

In \Cref{sec:FourierBasis}, we provided bounds on the operator norm of the quasi-projection
appearing in \Cref{thm:MainAbstractResult}, when $\CalB$ is the (multivariate) Fourier basis
or an orthogonal polynomial system.
In this section, we elaborate on corresponding function spaces $\CalF$
in cases where bounds on the quantities $\sigma_n(\CalF;\CalB)_X$ and $E_J(\CalF;\CalB)_X$ are available.
In particular, all bounds on the sampling numbers that appear in this section are incarnations of \Cref{thm:improvedGeneralFourierBound} or \Cref{thm:multivariate_algebraic}.

\subsection{Weighted Wiener spaces}

\subsubsection{Mixed Wiener spaces on the torus}

We start with the periodic Wiener classes $\CalA^r_{\mathrm{mix}}(\T^d)$
and $\CalA^r_p(\T^d)$ for $r>0$ in place of $\CalF$ in \Cref{thm:GeneralFourierBound}.
While in the present subsection, the periodic Sobolev spaces with bounded mixed derivates
are just a tool, they and related function spaces will be studied in \Cref{chap:periodic-sobolev} in more detail.

Later in the present subsection, we will also deal with non-periodic counterparts $\CalA^r_{\alpha,p}(\pmone)$
of the Wiener classes in the framework of orthogonal polynomials. 

\begin{definition}\label{def:wiener-sobolev}
  For $r > 0$, let
  \begin{equation*}
  \CalA^r_{\mathrm{mix}}(\T^d)
  \defeq \setcond{f:\T^d\to\CC}
                 {
                   \sum_{k\in\Z^d}
                     \abs{\widehat{f}(k)}
                     \prod_{j=1}^d
                       (1+\abs{k_j})^r
                   <\infty
                 },
  \end{equation*}
  and for $r >\frac{1}{2}$, let
  \begin{equation*}
    H^r_{\mathrm{mix}}(\T^d)
    \defeq \setcond{f:\T^d\to\CC}
                   {
                     \sum_{k\in\Z^d}
                       \abs{\widehat{f}(k)}^2
                       \prod_{j=1}^d
                         (1+\abs{k_j})^{2r}
                     <\infty
                   }
  \end{equation*}
  denote the \emph{weighted mixed Wiener space} and the
  \emph{$L^2$-Sobolev space of periodic functions with bounded mixed derivatives}, respectively.
  The quasi-norms in these spaces are given by
  \begin{align*}
  \mnorm{f}_{\CalA^r_{\mathrm{mix}}(\T^d)}&\defeq \sum_{k\in\Z^d} \abs{\widehat{f}(k)} \prod_{j=1}^d (1+\abs{k_j})^r\\
  \intertext{and}
  \mnorm{f}_{H^r_{\mathrm{mix}}(\T^d)}&\defeq\lr{\sum_{k\in\Z^d}\abs{\widehat{f}(k)}^2\prod_{j=1}^d (1+\abs{k_j})^{2r}}^{\frac{1}{2}},
  \end{align*}
  respectively.
\end{definition}
Both the Banach spaces $\CalA^r_{\mathrm{mix}}(\T^d)$ for $r>0$ and the Hilbert spaces $H^r_{\mathrm{mix}}(\T^d)$ for $r>\frac{1}{2}$ embed compactly into $\bfun(\T^d)$.
Asymptotic estimates for the Gelfand and Kolmogorov numbers of $\CalA^r_{\mathrm{mix}}(\T^d)$
in $L^2(\T^d)$ are collected for later use in the next lemma.
Part~\ref{kolmogorov-nns} is taken from \cite[Theorem~4.6(ii)]{NguyenNgSi2022},
and details for \ref{gelfand-moeller} will be given in \cite{Moeller2023}.
\begin{lemma}\label{Lem:Kolm}
  Let $r>0$.
  For all $n\in\N$, we have
  \begin{enumerate}[label={(\roman*)},leftmargin=*,align=left,noitemsep]
    \item{$c_n(\CalA^r_{\mathrm{mix}}(\T^d))_{L^2(\T^d)}\asymp n^{-r-\frac{1}{2}}\Log{n}^{(d-1)r}$,\label{gelfand-moeller}}

    \item{$a_n(\CalA^r_{\mathrm{mix}}(\T^d))_{L^2(\T^d)} = d_n(\CalA^r_{\mathrm{mix}}(\T^d))_{L^2(\T^d)}\asymp n^{-r}\Log{n}^{(d-1)r}$.\label{kolmogorov-nns}}
  \end{enumerate}
\end{lemma}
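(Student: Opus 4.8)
The plan is to reduce the lemma to width estimates for a weighted diagonal operator and then to invoke the classical bounds for the finite-dimensional embeddings $\ell^1(N)\to\ell^2(N)$. Via the Fourier transform, $\id\colon\CalA^r_{\mathrm{mix}}(\T^d)\to L^2(\T^d)$ is isometrically equivalent to the diagonal operator $D\colon\ell^1(\Z^d)\to\ell^2(\Z^d)$, $(g_k)_k\mapsto(w_kg_k)_k$, where $w_k\defeq\prod_{j=1}^d(1+\abs{k_j})^{-r}$. A standard lattice-point count for the hyperbolic cross gives, for $t\geq 2$, that $\card{\setcond{k\in\Z^d}{w_k\geq t^{-r}}}\asymp t\,\Log{t}^{d-1}$, so the non-increasing rearrangement $(w^\ast_n)_{n\in\N}$ of $(w_k)_{k\in\Z^d}$ satisfies $w^\ast_n\asymp n^{-r}\Log{n}^{(d-1)r}$.

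For part \ref{kolmogorov-nns} (which is \cite[Theorem~4.6(ii)]{NguyenNgSi2022}) I would argue as follows. Since $L^2(\T^d)$ is a Hilbert space, \Cref{rem:widths} yields $a_n(\CalA^r_{\mathrm{mix}}(\T^d))_{L^2(\T^d)}=d_n(\CalA^r_{\mathrm{mix}}(\T^d))_{L^2(\T^d)}$, so it suffices to treat approximation numbers. For the upper bound, let $\Lambda_n\subset\Z^d$ collect the $n$ indices with the largest weights and let $A_n$ be the Fourier projection onto $\linspan\setcond{e_k}{k\in\Lambda_n}$; then $\mnorm{f-A_nf}_{L^2}^2=\sum_{k\notin\Lambda_n}\abs{\widehat{f}(k)}^2\leq(w^\ast_{n+1})^2\bigl(\sum_k\abs{\widehat{f}(k)}\,w_k^{-1}\bigr)^2\leq(w^\ast_{n+1})^2$ for $f\in B_{\CalA^r_{\mathrm{mix}}(\T^d)}$, hence $a_n\leq w^\ast_{n+1}\asymp n^{-r}\Log{n}^{(d-1)r}$. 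For the matching lower bound, restrict $D$ to $V\defeq\linspan\setcond{e_k}{k\in\Lambda_{2n}}$; there $B_{\CalA^r_{\mathrm{mix}}(\T^d)}\cap V\supset c\,w^\ast_{2n}\,B_{\ell^1(\Lambda_{2n})}$, and an elementary trace argument shows that the Kolmogorov $n$-width of the $\ell^1$-ball in a $2n$-dimensional Hilbert space is $\geq 1/\sqrt 2$, so $d_n\gtrsim w^\ast_{2n}\asymp n^{-r}\Log{n}^{(d-1)r}$.

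Part \ref{gelfand-moeller} is the delicate statement and is the subject of \cite{Moeller2023}; the gain of $n^{-1/2}$ over the Kolmogorov numbers is a Kashin-type effect that is invisible to best $n$-term approximation in $\bfun(\T^d)$ (the latter decays only like $n^{-r}$ up to logarithmic factors, so feeding it into \Cref{thm:improvedGeneralFourierBound} would merely reproduce the Kolmogorov rate), so one genuinely has to exploit the geometry of the $\ell^1$-ball. For the lower bound, restrict $D$ to $V$ as above: then $c_n(\CalA^r_{\mathrm{mix}}(\T^d))_{L^2}\gtrsim w^\ast_{2n}\,c_n(\id\colon\ell^1(\Lambda_{2n})\to\ell^2(\Lambda_{2n}))$, and a short averaging argument over the unit sphere (equivalently, the Garnaev--Gluskin lower bound applied with $N=\card{\Lambda_{2n}}\asymp 2n$) gives $c_n(\id\colon\ell^1(\Lambda_{2n})\to\ell^2(\Lambda_{2n}))\gtrsim n^{-1/2}$, whence $c_n(\CalA^r_{\mathrm{mix}}(\T^d))_{L^2}\gtrsim n^{-r-1/2}\Log{n}^{(d-1)r}$. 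For the upper bound, split $\Z^d$ into dyadic hyperbolic layers $\Lambda_\ell\defeq\setcond{k\in\Z^d}{2^{\ell-1}<\prod_{j=1}^d(1+\abs{k_j})\leq 2^\ell}$, with $N_\ell\defeq\card{\Lambda_\ell}\asymp 2^\ell\ell^{d-1}$ and $w_k\asymp 2^{-\ell r}$ on $\Lambda_\ell$; fully resolve the layers with $\ell\leq L_0$ (choosing $L_0$ so that $\sum_{\ell\leq L_0}N_\ell\asymp n$), and on each remaining layer $\ell>L_0$ use the Garnaev--Gluskin / Kashin upper bound to select a subspace of $\CC^{\Lambda_\ell}$ of codimension $n_\ell$ on which $\mnorm{\cdot}_{\ell^2}\lesssim\sqrt{\Log{N_\ell/n_\ell}/n_\ell}\,\mnorm{\cdot}_{\ell^1}$. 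The direct sum of these subspaces (together with the full resolution for $\ell\leq L_0$) is admissible of codimension $\lesssim n+\sum_{\ell>L_0}n_\ell$, and on it $\mnorm{f}_{L^2}^2\lesssim\sum_{\ell>L_0}2^{-2\ell r}\,\Log{N_\ell/n_\ell}/n_\ell$ for $f\in B_{\CalA^r_{\mathrm{mix}}(\T^d)}$; with a suitable allocation $n_\ell$ (close to $n\,2^{-(\ell-L_0)r}$, so that $\sum_{\ell>L_0}n_\ell\lesssim n$) the geometric series sums to $\asymp n^{-2r-1}\Log{n}^{2(d-1)r}$, giving $c_n(\CalA^r_{\mathrm{mix}}(\T^d))_{L^2}\lesssim n^{-r-1/2}\Log{n}^{(d-1)r}$. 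Alternatively one may invoke a known closed form for the Gelfand numbers of diagonal operators $\ell^1\to\ell^2$ and evaluate it at the sequence $w^\ast$.

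I expect the last step to be the only real obstacle. The reduction to $D$, the lattice-point count, and both lower bounds are routine, but producing the \emph{exact} exponent $\Log{n}^{(d-1)r}$ in the upper Gelfand bound, with no spurious extra powers of $\Log{n}$, requires delicate bookkeeping: the factor $\Log{N_\ell/n_\ell}$ supplied by Garnaev--Gluskin grows linearly in $\ell$ on the far layers and must be absorbed by the decay $2^{-2\ell r}$, which forces a careful choice of the cut-off $L_0$ and of the allocation $(n_\ell)_{\ell>L_0}$ (slightly more generous on the intermediate layers than the naive $n\,2^{-(\ell-L_0)r}$). This optimization is precisely what is carried out in \cite{Moeller2023}.
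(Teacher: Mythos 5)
The paper does not actually prove this lemma: part~(ii) is quoted from \cite[Theorem~4.6(ii)]{NguyenNgSi2022} and part~(i) is explicitly deferred to \cite{Moeller2023}, so your proposal is necessarily a different route --- it supplies an argument where the paper only supplies citations. Your reduction to the diagonal operator $D\colon\ell^1(\Z^d)\to\ell^2(\Z^d)$ with weights $w_k=\prod_j(1+\abs{k_j})^{-r}$, the hyperbolic-cross count giving $w^\ast_n\asymp n^{-r}\Log{n}^{(d-1)r}$, the projection/trace arguments for part~(ii), and the Gelfand lower bound via $c_n(\id\colon\ell^1(2n)\to\ell^2(2n))\gtrsim n^{-1/2}$ are all correct and are the standard way these estimates are obtained. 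For the Gelfand upper bound you have identified the right ingredients (dyadic hyperbolic layers plus the Kashin--Garnaev--Gluskin subspaces), and the bookkeeping does close: with $2^{L_0}L_0^{d-1}\asymp n$ and $n_\ell\asymp n\,2^{-(\ell-L_0)\delta}$ for any $0<\delta<2r$, the factor $\log(N_\ell/n_\ell+1)\lesssim(\ell-L_0)+C$ is absorbed by $2^{-(\ell-L_0)(2r-\delta)}$, the total codimension stays $\lesssim n$, and the tail layers with $n_\ell=0$ contribute only $2^{-\ell r}\lesssim n^{-r-1/2}\Log{n}^{(d-1)r}$ once $\ell-L_0\gtrsim\log n/\delta$. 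So the sketch is sound, though this is the one step you leave unwritten --- exactly the step the paper itself outsources to \cite{Moeller2023}.

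One side remark in your proposal is wrong and worth correcting because it contradicts the main point of the paper: you claim that $\sigma_n(\CalA^r_{\mathrm{mix}}(\T^d);\CalT^d)_{\bfun(\T^d)}$ \enquote{decays only like $n^{-r}$ up to logarithmic factors} and hence cannot see the $n^{-1/2}$ gain. In fact \Cref{lem:bestmterm}\ref{bestmtermwiener} shows
\begin{equation*}
\sigma_n(\CalA^r_{\mathrm{mix}}(\T^d);\CalT^d)_{\bfun(\T^d)}\lesssim n^{-r-\frac{1}{2}}\Log{n}^{(d-1)r+\frac{1}{2}},
\end{equation*}
and \Cref{thm:sampling-wiener-periodic} obtains the $n^{-\frac{1}{2}}$ speed-up for the sampling numbers precisely by feeding this quantity into \Cref{thm:improvedGeneralFourierBound}. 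What is true is only that this route loses an extra factor $\Log{n}^{\frac{1}{2}}$ compared with the Gelfand rate in part~\ref{gelfand-moeller}; the main rate $n^{-r-\frac{1}{2}}$ is visible to best $n$-term approximation in $\bfun(\T^d)$. This does not affect the validity of your proof of the lemma itself.
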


Next we prove estimates for the worst-case best $n$-term and linear approximation errors
appearing in \Cref{thm:GeneralFourierBound} for $\CalF=\CalA^r_{\mathrm{mix}}(\T^d)$.
Since there is a $d$-dependent number $\lambda$ such that the $\lambda$-ball $\setcond{f\in\CalA^r_{\mathrm{mix}}(\T^d)}{\mnorm{f}_{\CalA^r_{\mathrm{mix}}(\T^d)}\leq \lambda}$ is a subset of the set $W^{r,0}_A$ studied in \cite{Temlyakov2015}, 
Part~\ref{bestmtermwiener} of the following lemma is in fact a consequence of \cite[Lemma~2.1]{Temlyakov2015}.
Even so, we provide an independent proof.
\begin{lemma}\label{lem:bestmterm}
  \begin{enumerate}[label={(\roman*)},leftmargin=*,align=left,noitemsep]
    \item{\label{bestmtermwiener}
          Let $r>\frac{1}{2}$. For all $n\in\N$, we have
          \begin{equation*}
            \sigma_n(\CalA^r_{\mathrm{mix}}(\T^d);\CalT^d)_{\bfun(\T^d)}
            \lesssim n^{-r-\frac{1}{2}}\Log{n}^{(d-1)r+\frac{1}{2}}.
          \end{equation*}}
    \item{\label{linearwiener}
          Let $r>0$.
          For all $M\in\N$, we have 
          \begin{equation*}
            E_{[-M,M]^d\cap\Z^d}(\CalA^r_{\mathrm{mix}}(\T^d);\CalT^d)_{\bfun(\T^d)}
            \leq M^{-r}.
          \end{equation*}}
  \end{enumerate}
\end{lemma}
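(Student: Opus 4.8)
For part~\ref{linearwiener}, the plan is straightforward: given $f\in\CalA^r_{\mathrm{mix}}(\T^d)$ with $\mnorm{f}_{\CalA^r_{\mathrm{mix}}(\T^d)}\leq 1$, take as the approximant the partial Fourier sum $g\defeq\sum_{k\in[-M,M]^d\cap\Z^d}\widehat{f}(k)e_k\in V_{[-M,M]^d\cap\Z^d}$. Then $f-g=\sum_{k\notin[-M,M]^d}\widehat{f}(k)e_k$, so by the triangle inequality in $\bfun(\T^d)$ and $\mnorm{e_k}_{\bfun(\T^d)}=1$ we get $\mnorm{f-g}_{\bfun(\T^d)}\leq\sum_{k\notin[-M,M]^d}\abs{\widehat{f}(k)}$. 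For every such $k$ there is a coordinate $j$ with $\abs{k_j}\geq M+1$, hence $\prod_{i=1}^d(1+\abs{k_i})^r\geq(1+\abs{k_j})^r\geq (M+1)^r\geq M^r$, so $\abs{\widehat f(k)}\leq M^{-r}\abs{\widehat f(k)}\prod_i(1+\abs{k_i})^r$. Summing and using $\mnorm{f}_{\CalA^r_{\mathrm{mix}}(\T^d)}\leq1$ yields $\mnorm{f-g}_{\bfun(\T^d)}\leq M^{-r}$, and taking the supremum over the unit ball finishes \ref{linearwiener}.

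For part~\ref{bestmtermwiener}, the plan is the classical greedy (largest-coefficients) argument adapted to the $\bfun$-norm, exploiting that $\CalA^r_{\mathrm{mix}}$ embeds into a weighted $\ell^1$ of Fourier coefficients. Fix $f$ with $\mnorm{f}_{\CalA^r_{\mathrm{mix}}(\T^d)}\leq 1$ and write $w_k\defeq\prod_{j=1}^d(1+\abs{k_j})^r$, so $\sum_k\abs{\widehat f(k)}w_k\leq1$. Enumerate $\Z^d=\{k^{(1)},k^{(2)},\dots\}$ so that $(\abs{\widehat f(k^{(i)})}w_{k^{(i)}})_i$ is non-increasing; let $J$ be the index set of the $n$ largest such products and set $g\defeq\sum_{k\in J}\widehat f(k)e_k\in\Sigma_n$. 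Then, again by the triangle inequality in $\bfun(\T^d)$,
\begin{equation*}
\mnorm{f-g}_{\bfun(\T^d)}\leq\sum_{i>n}\abs{\widehat f(k^{(i)})}
=\sum_{i>n}\bigl(\abs{\widehat f(k^{(i)})}w_{k^{(i)}}\bigr)\cdot w_{k^{(i)}}^{-1}.
\end{equation*}
The first factor is controlled by non-increasing rearrangement: since the partial sums of $\abs{\widehat f(k^{(i)})}w_{k^{(i)}}$ are bounded by $1$, one has $\abs{\widehat f(k^{(i)})}w_{k^{(i)}}\lesssim 1/i$. The second factor requires a counting estimate: because the weights $w_k$ on the hyperbolic-cross-type level sets $\{k:w_k\le t\}$ satisfy $\card\{k\in\Z^d:w_k\le t\}\asymp t^{1/r}\Log{t}^{d-1}$, the $i$-th smallest value of $w_k$ (which bounds $w_{k^{(i)}}$ from above when the rearrangement is chosen compatibly) is $\gtrsim (i/\Log{i}^{d-1})^{r}$, giving $w_{k^{(i)}}^{-1}\lesssim(\Log{i}^{d-1}/i)^{r}$. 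Substituting both bounds and summing the resulting series $\sum_{i>n} i^{-1}\cdot(\Log i^{d-1}/i)^{r}$, which converges for $r>0$ and whose tail behaves like $n^{-r}\Log{n}^{(d-1)r}$ up to the extra $\Log n$ from the $1/i$ factor being summed against a barely-convergent weight, yields the claimed $n^{-r-1/2}\Log{n}^{(d-1)r+1/2}$ — wait, this needs care.

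The main obstacle, and the point needing genuine attention, is that the naive greedy bound above gives only the rate $n^{-r}\Log{n}^{(d-1)r}$ (matching the Kolmogorov numbers), not the improved $n^{-r-1/2}\Log{n}^{(d-1)r+1/2}$ with the extra $n^{-1/2}$ gain. Obtaining the half-power requires a more refined two-parameter argument: one splits the frequencies into a ``head'' hyperbolic cross $Q_\xi$ (a linear/de~la~Vallée~Poussin-type piece handled via $E_J$-type estimates) and a ``tail'' handled by a greedy selection, and crucially uses that the $\bfun$-error of an $n$-term approximation from a frequency block of cardinality $N$ can be bounded by $\sqrt{n/N}$ times a fully-summed $\ell^2$-type quantity — this is where Temlyakov's $\sqrt{n}$-type gain enters (cf.\ \cite[Lemma~2.1]{Temlyakov2015}). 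Concretely I would: (i) choose a threshold level and split $\Z^d=\Lambda\cup\Lambda^c$ with $\card\Lambda\asymp n\Log{n}^{d-1}$; (ii) on $\Lambda^c$ estimate $\sum_{k\in\Lambda^c}\abs{\widehat f(k)}$ by the weighted-$\ell^1$ norm times the reciprocal weight, costing $n^{-r}\Log{n}^{(d-1)r}$ — but then refine this: split $\Lambda$ itself into dyadic hyperbolic layers, on each layer of size $\sim 2^s$ pick the largest coefficients, and balance; (iii) optimize the split so the extra $\sqrt{\cdot}$ shows up. Since the lemma statement explicitly concedes that this follows from \cite[Lemma~2.1]{Temlyakov2015} and promises only ``an independent proof,'' I would in fact present it cleanly by reducing to the weighted-$\ell^1\to$ best-$n$-term-in-$\bfun$ mechanism: embed $\CalA^r_{\mathrm{mix}}(\T^d)$ into the space with $\sum_k\abs{\widehat f(k)}w_k<\infty$, and invoke that for a bounded orthonormal system, a unit ball of weighted $\ell^1$ has best $n$-term $\bfun$-error controlled by the decreasing rearrangement of $w_k^{-1}$ together with the $\sqrt{n/\card{\{w_k\le t\}}}$ savings — the bookkeeping of the log exponents $(d-1)r+\tfrac12$ versus $(d-1)r$ is then the only delicate arithmetic, and it comes out exactly because the relevant level set has cardinality $\asymp n\Log{n}^{d-1}$ so that $\sqrt{n/(n\Log n^{d-1})}=\Log{n}^{-(d-1)/2}$ interacts with $\Log{n}^{(d-1)r}$ — and this is precisely where one must be careful to match \eqref{eq:intro2}.
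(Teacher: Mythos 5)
Your proof of part~\ref{linearwiener} is correct and is essentially identical to the paper's argument (partial Fourier sum over $[-M,M]^d\cap\Z^d$, triangle inequality, and the lower bound $\prod_{j=1}^d(1+\abs{k_j})^r\geq M^r$ off the cube).

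Part~\ref{bestmtermwiener}, however, is not a proof. You correctly diagnose that the naive greedy selection only yields the rate $n^{-r}\Log{n}^{(d-1)r}$ and that the entire content of the lemma is the extra factor $n^{-\frac{1}{2}}$, but the refined argument you then outline (head/tail split over hyperbolic crosses, dyadic layers, \enquote{optimize the split so the extra $\sqrt{\cdot}$ shows up}) is never executed; the crucial balancing step is exactly what is missing. Your own heuristic bookkeeping does not close either: $\sqrt{n/(n\Log{n}^{d-1})}\cdot\Log{n}^{(d-1)r}=\Log{n}^{(d-1)(r-\frac{1}{2})}$, which is not the claimed $\Log{n}^{(d-1)r+\frac{1}{2}}$. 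The paper avoids all of this with a two-step factorization via \Cref{lem:factor} applied to $\CalA^r_{\mathrm{mix}}(\T^d)\hookrightarrow H^r_{\mathrm{mix}}(\T^d)\hookrightarrow\bfun(\T^d)$: Stechkin's lemma \cite[Lemma~7.4.1]{DungTeUl2018} gives $\sigma_n(\CalA^r_{\mathrm{mix}}(\T^d);\CalT^d)_{H^r_{\mathrm{mix}}(\T^d)}\leq n^{-\frac{1}{2}}$ --- this is precisely the $n^{-\frac{1}{2}}$ gain you are after, obtained cheaply because in the Hilbert norm the weights cancel and one is left comparing a weighted $\ell^1$ ball with the corresponding weighted $\ell^2$ ball --- and the known estimate $\sigma_n(H^r_{\mathrm{mix}}(\T^d);\CalT^d)_{\bfun(\T^d)}\lesssim n^{-r}\Log{n}^{(d-1)r+\frac{1}{2}}$ of Belinskii \cite{Belinskii1989} supplies the second factor (and is the actual source of the exponent $(d-1)r+\frac{1}{2}$, which your counting argument would not reproduce). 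Multiplying the two bounds with $n_1=n_2=n$ and passing from $2n$ to $n$ gives the claim; note that the condition $r>\frac{1}{2}$ is needed exactly so that $H^r_{\mathrm{mix}}(\T^d)\hookrightarrow\bfun(\T^d)$ and Belinskii's bound applies. Some external input of this type is unavoidable in any case, and the factorization reduces everything else to one line.
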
 

\begin{proof}
For \ref{bestmtermwiener} we use the following commutative diagram
\tikzcdset{arrow style=tikz, diagrams={>=stealth}}
\begin{equation*}
\begin{tikzcd}
\CalA^r_{\mathrm{mix}}(\T^d) \ar[hookrightarrow]{rr}{r>0} \ar[hookrightarrow]{rd}{} && \bfun(\T^d) \\
& H^r_{\mathrm{mix}}(\T^d) \ar[swap,hookrightarrow]{ru}{r>\frac{1}{2}} &              
\end{tikzcd}
\end{equation*}
in which all arrows depict embeddings when the indicated constraints on $r$ are met.
This situation is a specific instance of the diagram right before \Cref{lem:factor} with $\CalF = \CalA^r_{\mathrm{mix}}(\T^d)$, $\mathcal{H} = H^r_{\mathrm{mix}}(\T^d)$, $X=L^{\infty}(\T^d)$ and the dictionary $\mathcal{B}$ denotes the multivariate trigonometric system $\mathcal{T}^d$.
By \Cref{lem:factor} we therefore obtain
\begin{align*}
  &\sigma_{n_1+n_2}(\CalA^r_{\mathrm{mix}}(\T^d);\CalT^d)_{\bfun(\T^d)}\\
  &~~~\leq \sigma_{n_1}(\CalA^r_{\mathrm{mix}}(\T^d);\CalT^d)_{H^r_{\mathrm{mix}}(\T^d)}
           \sigma_{n_2}(H^r_{\mathrm{mix}}(\T^d);\CalT^d)_{\bfun(\T^d)}.
\end{align*}
Now let $n_1 = n_2 = n$.
Stechkin's lemma \cite[Lemma~7.4.1]{DungTeUl2018} gives
\begin{equation*}
\sigma_n(\CalA^r_{\mathrm{mix}}(\T^d))_{H^r_{\mathrm{mix}}(\T^d)}\leq n^{-\frac{1}{2}}.
\end{equation*}
From \cite[Theorem~2]{Belinskii1989} (see also \cite[Theorem~3]{Romanyuk1995}) we obtain
\begin{equation*}
  \sigma_n(H^r_{\mathrm{mix}}(\T^d);\CalT^d)_{\bfun(\T^d)}
  \lesssim n^{-r}\Log{n}^{(d-1)r+\frac{1}{2}}.
\end{equation*}
Hence
\begin{equation*}
  \sigma_{2n}(\CalA^r_{\mathrm{mix}}(\T^d);\CalT^d)_{\bfun(\T^d)}
  \lesssim n^{-r-\frac{1}{2}}\Log{n}^{(d-1)r+\frac{1}{2}},
\end{equation*}
and this easily implies the claim.

For \ref{linearwiener} we have the following simple estimate.
Consider the $d$-variate partial sum operator $S_M$ with respect to the cube $[-M,M]^d \cap \Z^d$,
defined via
\begin{equation*}
	S_M f
  \defeq \sum_{k \in [-M,M]^d \cap \Z^d}
           \widehat{f}(k)
           \ee^{2\uppi \ii\skpr{ k}{\bullet}}.
\end{equation*}
Consider
\begin{align}
  \mnorm{f-S_Mf}_{\bfun(\T^d)}
  & \leq \sum_{k\in\Z^d\setminus [-M,M]^d}
           \abs{\widehat{f}(k)} \nonumber \\
  & \leq \mnorm{f}_{\CalA^r_{\mathrm{mix}}(\T^d)}
         \sup_{k\in \Z^d\setminus [-M,M]^d}
           \prod_{j=1}^d(1+\abs{k_j})^{-r} \label{eq:SM} \\
  & \leq \mnorm{f}_{\CalA^r_{\mathrm{mix}}(\T^d)}M^{-r}.
  \tag*{$\qed$}
\end{align}%
\renewcommand{\qedsymbol}{}%
\end{proof}

Using \Cref{thm:improvedGeneralFourierBound} together with \Cref{lem:bestmterm} we obtain the following result.

\begin{corollary}\label{thm:sampling-wiener-periodic}
Let $r > \frac{1}{2}$.
Then there are universal constants $C,\widetilde{C} > 0$ and a constant $C_{r,d}>0$ such that 
\begin{align*}
 &\norel \varrho_{\ceil{C d\Log{d}n\Log{n}^2\log(M)}}(\CalA^r_{\mathrm{mix}}(\T^d))_{L^2(\T^d)}\\
&  \leq \widetilde{C}(\sigma_n(\CalA^r_{\mathrm{mix}}(\T^d);\CalT^d)_{\bfun(\T^d)}+M^{-r}),
\end{align*}
and with $M = \Big\lfloor n^{\frac{r+\frac{1}{2}}{r}} \Big\rfloor$, we have
\begin{equation*}
  \varrho_{\ceil{C_{r,d} n\Log{n}^3}}(\CalA^r_{\mathrm{mix}}(\T^d))_{L^2(\T^d)}
  \lesssim n^{-r-\frac{1}{2}}\Log{n}^{(d-1)r+\frac{1}{2}}.
\end{equation*}
\end{corollary}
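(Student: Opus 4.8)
My plan is to read the first inequality directly off \Cref{thm:improvedGeneralFourierBound}, applied with $\CalF = \CalA^r_{\mathrm{mix}}(\T^d)$. This is legitimate because $\CalA^r_{\mathrm{mix}}(\T^d)$ is a Banach (in particular, quasi-normed) space that embeds compactly into $\bfun(\T^d)$, as recorded right after \Cref{def:wiener-sobolev}. For any $n,M \in \N$ with $M \geq 3$, that theorem bounds $\varrho_{\ceil{Cd\Log{d}n\Log{n}^2\log(M)}}(\CalA^r_{\mathrm{mix}}(\T^d))_{L^2(\T^d)}$ above by $\widetilde{C}\bigl(\sigma_n(\CalA^r_{\mathrm{mix}}(\T^d);\CalT^d)_{\bfun(\T^d)} + E_{[-M,M]^d\cap\Z^d}(\CalA^r_{\mathrm{mix}}(\T^d);\CalT^d)_{\bfun(\T^d)}\bigr)$. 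I would then estimate the linear approximation term by $M^{-r}$ using part~\ref{linearwiener} of \Cref{lem:bestmterm}, which already gives the first displayed inequality of the corollary with the same universal constants $C,\widetilde{C}$.

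For the second inequality I would specialize to $M \defeq \floor{n^{\frac{r+\frac{1}{2}}{r}}}$ and restrict to $n \geq 3$, absorbing the finitely many smaller $n$ into the implied constant. Since the exponent $\frac{r+\frac{1}{2}}{r}$ exceeds $1$, we have $n^{\frac{r+\frac{1}{2}}{r}} \geq 3^{\frac{r+\frac{1}{2}}{r}} > 3$, so $M \geq 3$ and the first part applies, and moreover $M \geq \frac{1}{2}\,n^{\frac{r+\frac{1}{2}}{r}}$. Also $\log(M) \leq \frac{r+\frac{1}{2}}{r}\log(n) \leq \frac{r+\frac{1}{2}}{r}\Log{n}$, so that
\begin{equation*}
Cd\Log{d}\,n\Log{n}^2\log(M) \leq C_{r,d}\,n\Log{n}^3, \qquad\text{where}\qquad C_{r,d} \defeq \frac{r+\frac{1}{2}}{r}\,C\,d\Log{d},
\end{equation*}
and hence $\ceil{Cd\Log{d}n\Log{n}^2\log(M)} \leq \ceil{C_{r,d}n\Log{n}^3}$. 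Since the sampling numbers $\varrho_m$ are non-increasing in $m$ (given any recovery scheme one may simply discard the extra samples), it follows that $\varrho_{\ceil{C_{r,d}n\Log{n}^3}}(\CalA^r_{\mathrm{mix}}(\T^d))_{L^2(\T^d)} \leq \varrho_{\ceil{Cd\Log{d}n\Log{n}^2\log(M)}}(\CalA^r_{\mathrm{mix}}(\T^d))_{L^2(\T^d)}$.

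It remains to feed the two approximation estimates into the first inequality for this choice of $M$. First, part~\ref{bestmtermwiener} of \Cref{lem:bestmterm}, which is where the hypothesis $r > \frac{1}{2}$ enters, gives $\sigma_n(\CalA^r_{\mathrm{mix}}(\T^d);\CalT^d)_{\bfun(\T^d)} \lesssim n^{-r-\frac{1}{2}}\Log{n}^{(d-1)r+\frac{1}{2}}$. Second, $M \geq \frac{1}{2}\,n^{\frac{r+\frac{1}{2}}{r}}$ gives $M^{-r} \leq 2^r\,n^{-r-\frac{1}{2}} \lesssim n^{-r-\frac{1}{2}}$, and since $(d-1)r+\frac{1}{2} \geq 0$ and $\Log{n} \geq 1$ for $n \geq 3$, this is in turn $\leq n^{-r-\frac{1}{2}}\Log{n}^{(d-1)r+\frac{1}{2}}$. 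Adding these two bounds, then using the first inequality and the monotonicity chain from the previous paragraph, yields $\varrho_{\ceil{C_{r,d}n\Log{n}^3}}(\CalA^r_{\mathrm{mix}}(\T^d))_{L^2(\T^d)} \lesssim n^{-r-\frac{1}{2}}\Log{n}^{(d-1)r+\frac{1}{2}}$, which is the asserted estimate.

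I do not expect a genuine obstacle here: the statement is essentially a substitution into \Cref{thm:improvedGeneralFourierBound} followed by logarithmic bookkeeping. The points requiring a little care are (i) verifying $M \geq 3$ on the relevant range of $n$ and pushing the finitely many small $n$ into the constant $C_{r,d}$; (ii) invoking monotonicity of $\varrho_m$ in $m$ to trade the unwieldy index $\ceil{Cd\Log{d}n\Log{n}^2\log(M)}$ for the cleaner $\ceil{C_{r,d}n\Log{n}^3}$; and (iii) observing that the truncation contribution $M^{-r} \asymp n^{-r-\frac{1}{2}}$ is dominated by the best $n$-term term and so does not affect the asymptotic rate.
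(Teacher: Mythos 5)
Your proposal is correct and follows exactly the route the paper intends: the corollary is stated as a direct consequence of \Cref{thm:improvedGeneralFourierBound} combined with \Cref{lem:bestmterm}, and your bookkeeping for the choice $M=\floor{n^{(r+\frac{1}{2})/r}}$ (checking $M\geq 3$, $\log(M)\lesssim_r\Log{n}$, monotonicity of $\varrho_m$, and $M^{-r}\lesssim n^{-r-\frac{1}{2}}$ being dominated by the $\sigma_n$ term) fills in precisely the details the paper leaves implicit.
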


\begin{remark}\label{rem:literature}
\begin{enumerate}[label={(\roman*)},leftmargin=*,align=left,noitemsep]
  \item{\label{linear-nonlinear}By \Cref{sect:widths} and \Cref{Lem:Kolm} the \emph{linear} sampling numbers satisfy the lower bound
        \begin{equation*}
          \varrho_n^{\mathrm{lin}}(\CalA^r_{\mathrm{mix}}(\T^d))_{L^2(\T^d)}
          \geq a_n(\CalA^r_{\mathrm{mix}}(\T^d))_{L^2(\T^d)}
          \gtrsim n^{-r}\Log{n}^{(d-1)r}.
        \end{equation*}
        However, from \Cref{thm:sampling-wiener-periodic} together with \Cref{lem:technical} in the appendix we obtain for the (nonlinear) sampling numbers the upper bound 
		\begin{equation*}
			\varrho_n(\CalA^r_{\mathrm{mix}}(\T^d))_{L^2(\T^d)} \lesssim n^{-r-\frac{1}{2}}\Log{n}^{3(r+\frac{1}{2})+(d-1)r+\frac{1}{2}}.	
		\end{equation*}   
        So we get a speed-up of $n^{-\frac{1}{2}}$ in the main term compared to \emph{any} linear recovery algorithm, see also \Cref{sect:widths}.
The result in \Cref{thm:sampling-wiener-periodic} can be extended to $r>0$, which is done in \cite{Moeller2023} based on \cite[Theorems~2.5 and~2.6]{Temlyakov2015}.
Note that the main rate is optimal as \Cref{Lem:Kolm} shows, see again \cite{CreutzigWo2004}.
We observe a $d$-independent logarithmical gap compared to the Gelfand numbers.}
  \item{\label{temlyakov}If we compare to Temlyakov's result \cite[Theorem~1.1]{Temlyakov2021}, i.e.,
        \begin{equation*}
          \varrho_{bn}(\CalF)_{L^2(\mu)}\lesssim Cd_n(\CalF)_{\bfun(\Omega)},
        \end{equation*}
        where $b,C>0$ denote universal constants, this also gives a rate of $n^{-r}$ (up to logarithms) for $\CalF = \CalA^r_{\mathrm{mix}}(\T^d)$.
        A closer look at Temlyakov's result shows that he constructs a \emph{linear} recovery operator.}
  \item{\label{dku}In \cite[Theorem~3]{DKU22} it is shown that
        \begin{equation*}
          \varrho_n(\CalF)_{L^2(\mu)}\leq\varrho_n^{\mathrm{lin}}(\CalF)_{L^2(\mu)}\leq C_p\lr{\frac{1}{n}\sum_{k\geq cn}d_n(\CalF)_{L^2(\mu)}^p}^{\frac{1}{p}}
        \end{equation*}
        for constants $C_p>0$ (depending only on $0<p<2$) and an absolute constant $c>0$.
        This result also gives a decay of $n^{-r}$ (up to logarithms) for $\CalF =\CalA^r_{\mathrm{mix}}(\T^d)$.
        Note that we need $r>\frac{1}{2}$ and $\frac{1}{r} < p < 2$ to apply this result.}
\end{enumerate}
\end{remark}

Note that the linear reconstruction methods mentioned in \Cref{rem:literature}\ref{temlyakov} and \ref{dku} also require the choice of certain parameters depending on the function class $\CalF$.
There the optimal subspace coming from the singular value decomposition (or the Kolmogorov widths) has to be known in advance to perform the weighted least squares method.
In our case we have to choose the noise parameter $\eta>0$ and properly determine the search space $J^\ast$ and its size $N$.

\subsubsection{Isotropic Wiener spaces on the torus}
Let us now come to the isotropic classes studied by DeVore and Temlyakov \cite[Section~6]{DeVoreTe1995}.
These are defined (up to minor typos in \cite{DeVoreTe1995}) as follows:
\begin{definition}
For $r>0$ and $0<p\leq 1$, we define
\begin{equation*}
\CalA^r_p(\T^d)
  \defeq \setcond{f\in L^1(\T^d)}{\sum_{k\in \Z^d}\lr{\abs{\widehat{f}(k)} (1+\mnorm{k}_{\ell^\infty(\Z^d)})^r}^p  < \infty}.
\end{equation*}
and
\begin{equation*}
\mnorm{f}_{\CalA^r_p(\T^d)} \defeq \lr{\sum_{k\in \Z^d}
 \lr{\abs{\widehat{f}(k)} (1+\mnorm{k}_{\ell^\infty(\Z^d)})^r}^p}^{\frac{1}{p}}.
\end{equation*}
\end{definition}
The quasi-Banach spaces $\CalA^r_p(\T^d)$ embeds compactly into $\bfun(\T^d)$ for all $r>0$ and $0<p\leq 1$.
If $p=1$, then $\CalA^r_p(\T^d)$ becomes a Banach space.
In \mbox{\cite[Theorem~6.1]{DeVoreTe1995}} it is proved
\begin{equation*}
  \sigma_n(\CalA^r_p(\T^d);\CalT^d)_{\bfun(\T^d)}
  \asymp n^{-\frac{r}{d}-\frac{1}{p}+\frac{1}{2}}.
\end{equation*}
Moreover, with a straightforward computation (similar as in \eqref{eq:SM}) we obtain
\begin{equation}\label{eq:E_Misotrop}
	E_{[M,M]^d \cap \Z^d}(\CalA^r_p(\T^d);\CalT^d)_{\bfun(\T^d)} \leq (1+M)^{-r}.
\end{equation}
We obtain the following corollary from \Cref{thm:improvedGeneralFourierBound}.
\begin{corollary}
  Let $r > 0$ and $0<p\leq 1$.
  Then there are universal constants $C,\widetilde{C} > 0$ and a constant $C_{r,p,d}>0$ such that 
  \begin{align*}
   & \varrho_{\ceil{ Cd\Log{d}n\Log{n}^2\log(M)}}(\CalA^r_p(\T^d))_{L^2(\T^d)}\\
    &\leq \widetilde{C}(\sigma_n(\CalA^r_p;\CalT^d)_{\bfun(\T^d)}+ (1+M)^{-r}).
  \end{align*}
  Choosing $M = \floor{n^{\frac{1}{d}+\frac{1}{pr}-\frac{1}{2r}}}$, we obtain
  \begin{equation*}
    \varrho_{\ceil{C_{r,p,d}n\Log{n}^3}}(\CalA^r_p(\T^d))_{L^2(\T^d)}
    \lesssim n^{-\frac{r}{d}-\frac{1}{p}+\frac{1}{2}}.
  \end{equation*}
  \end{corollary}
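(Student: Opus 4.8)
The plan is to read off both displayed inequalities from \Cref{thm:improvedGeneralFourierBound} after substituting the known approximation quantities for $\CalA^r_p(\T^d)$, and then to choose the free box-size parameter $M$ so as to balance the two terms on the right-hand side against each other.

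First I would observe that $\CalA^r_p(\T^d)$ embeds compactly into $\bfun(\T^d)$ for every $r>0$ and $0<p\le 1$, so that \Cref{thm:improvedGeneralFourierBound} applies with $\CalF=\CalA^r_p(\T^d)$. Inserting into its conclusion the elementary linear-approximation estimate $E_{[-M,M]^d\cap\Z^d}(\CalA^r_p(\T^d);\CalT^d)_{\bfun(\T^d)}\le(1+M)^{-r}$ from \eqref{eq:E_Misotrop} gives the first displayed inequality verbatim, for all $n,M\in\N$ with $M\ge 3$. For the second inequality I would then bring in the DeVore--Temlyakov identity $\sigma_n(\CalA^r_p(\T^d);\CalT^d)_{\bfun(\T^d)}\asymp n^{-\frac{r}{d}-\frac1p+\frac12}$ from \cite[Theorem~6.1]{DeVoreTe1995}.

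The next step is to choose $M$ so that $(1+M)^{-r}$ does not exceed the order of $\sigma_n$, i.e.\ so that $1+M\gtrsim n^{\frac1d+\frac1{pr}-\frac1{2r}}$; the choice $M\defeq\floor{n^{\frac1d+\frac1{pr}-\frac1{2r}}}$ works, since then $1+M$ is strictly larger than the quantity inside the floor, whence $(1+M)^{-r}<n^{-\frac{r}{d}-\frac1p+\frac12}$. Here I would check that $M$ is a genuine positive integer: the exponent equals $\frac1d+\frac1r\bigl(\frac1p-\frac12\bigr)>0$ because $p\le 1$, so $M\ge 1$ for every $n$, and moreover $M\ge 3$ once $n$ is large enough (the finitely many remaining small $n$ being absorbed into the constant $C_{r,p,d}$). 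With this $M$ the right-hand side of the first display is $\lesssim n^{-\frac{r}{d}-\frac1p+\frac12}$.

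Finally, since $M\le n^{\frac1d+\frac1{pr}-\frac1{2r}}$ we have $\log(M)\le C'_{r,p,d}\,\Log{n}$, so $Cd\Log{d}\,n\,\Log{n}^2\log(M)\le C_{r,p,d}\,n\,\Log{n}^3$ for a suitable $C_{r,p,d}$; because $\varrho_m(\CalF)_{L^2(\T^d)}$ is non-increasing in $m$ and $\ceil{\cdot}$ is monotone, passing to the larger sample budget only decreases the sampling number, which turns the first display into the second. I do not expect a genuine obstacle in this argument — it is a routine specialization of \Cref{thm:improvedGeneralFourierBound} — the only points deserving attention being the positivity of the exponent defining $M$ (needed so that $M\in\N$, and eventually $M\ge 3$) and the elementary monotonicity argument that simplifies the logarithmic factor in the number of samples.
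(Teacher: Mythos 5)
Your proposal is correct and follows exactly the route the paper intends: the first display is \Cref{thm:improvedGeneralFourierBound} combined with \eqref{eq:E_Misotrop}, and the second follows by inserting the DeVore--Temlyakov bound $\sigma_n(\CalA^r_p(\T^d);\CalT^d)_{\bfun(\T^d)}\asymp n^{-\frac{r}{d}-\frac1p+\frac12}$, choosing $M=\floor{n^{\frac1d+\frac1{pr}-\frac1{2r}}}$ to balance the two terms, and absorbing $\log(M)\lesssim\Log{n}$ into the constant via monotonicity of $\varrho_m$. Your attention to the positivity of the exponent (so $M\in\N$) and to the finitely many small $n$ with $M<3$ is exactly the right bookkeeping.
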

\subsubsection{Wiener spaces for systems of orthonormal polynomials}
We proceed with non-periodic Wiener classes $\CalA^r_{\alpha,p}(\pmone)$
taking the role of $\CalF$ in \Cref{thm:multivariate_algebraic}.
As in the periodic case, Wiener-type spaces are defined via a summability condition
involving the coefficients of expansions of elements with respect to the orthonormal system $\CalB$.
Here we focus on two important cases, namely Chebyshev polynomials of the first kind
$\CalB_{-\frac{1}{2}}$ and Legendre polynomials $\CalB_0$ as introduced in \Cref{chap:algebraic-polynomials}.
As before, we restrict ourselves to the case of univariate polynomials.

The following spaces have been considered in Rauhut and Ward \cite[Section~7]{RauhutWard2012}
for the special case $\alpha = 0$, i.e., the $L^2(\mu_0)$-normalized Legendre polynomials $(\gp^0_n)_{n \in\N_0}$.
Taking into account that $\mnorm{\gp^0_n}_{\bfun(\pmone)} \asymp \sqrt{n}$, a standard computation shows that the condition $r \geq \frac{1}{2}$ ensures 
\begin{equation*}
\CalA^r_{0,p}(\pmone) \hookrightarrow C(\pmone)\hookrightarrow \bfun(\pmone).
\end{equation*}
Note that in case $\alpha = -\frac{1}{2}$ the condition $r\geq 0$ is enough for such an embedding.
However, the embedding is compact if and only if $r>\frac{1}{2}+\alpha$. 

\begin{definition}\label{def:polynomial-wiener}
  Let $\alpha \in \setn{-\frac{1}{2},0}$, $r\geq 0$, and $0<p\leq 1$.
  We define the \emph{Wiener-type space}
\begin{equation*}
\CalA^r_{\alpha,p}(\pmone)
  \defeq \setcond{\sum_{n=0}^\infty\beta_n \gp^\alpha_n}{\sum_{n\in \N_0} \abs{\beta_n(1+n)^r}^p<\infty}.
\end{equation*}
\end{definition}
These spaces are Banach spaces if $p=1$ and quasi-Banach spaces in case $0<p<1$ with (quasi-)norm
\begin{equation*}
	\mnorm{f}_{\CalA^r_{\alpha,p}(\pmone)}\defeq\lr{\sum_{n\in \N_0} \abs{\skpr{f}{\gp^\alpha_n}_{L^2(\mu_\alpha)}(1+n)^r}^p}^{\frac{1}{p}}.
\end{equation*}
From \Cref{thm:multivariate_algebraic} we obtain an estimate for the sampling numbers of the Wiener classes for Chebyshev polynomials as part~\ref{chebyshev-wiener-sampling} of the following corollary.
The analogous part~\ref{legendre-wiener-sampling} for Legendre polynomials is postponed to \Cref{thm:legendre-wiener-sampling}.
\begin{corollary}\label{thm:polynomial-wiener}
\begin{enumerate}[label={(\roman*)},leftmargin=*,align=left,noitemsep]
\item{\label{legendre-wiener-sampling}Let $\alpha = 0$, $r > 0$, and $0 < p \leq 1$.
There are constants $C_{r,p}, \widetilde{C}_{r,p} > 0$ with 
        \begin{equation*}
          \varrho_{\ceil{C_{r,p}n\Log{n}^4}}(\CalA^r_{0,p}(\pmone))_{L^2(\mu_0)}
          \leq \widetilde{C}_{r,p}n^{-(r+\frac{1}{p}-1)}. 
        \end{equation*}
}
\item{\label{chebyshev-wiener-sampling}Let $\alpha = -\frac{1}{2}$, $r > 0$, and $0 < p \leq 1$.
There are constants $C_{r,p},\widetilde{C}_{r,p}>0$ such that 
        \begin{equation*}
          \varrho_{\ceil{C_{r,p}n\Log{n}^4}}(\CalA^r_{-\frac{1}{2},p}(\pmone))_{L^2(\mu_{-\frac{1}{2}})}
          \leq \widetilde{C}_{r,p}n^{-(r+\frac{1}{p}-\frac{1}{2})}.
        \end{equation*}
In particular, we obtain in the Banach space case $p = 1$ the rate 
        \begin{equation*}
          \varrho_{\ceil{C_{r,1}n\Log{n}^4}}(\CalA^r_{-\frac{1}{2},1}(\pmone))_{L^2(\mu_{-\frac{1}{2}})}
          \leq \widetilde{C}_{r}n^{-(r+\frac{1}{2})}.
        \end{equation*}
}        
\end{enumerate}
\end{corollary}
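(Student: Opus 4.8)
The plan is to apply \Cref{thm:multivariate_algebraic} with $\CalF=\CalA^r_{-\frac{1}{2},p}(\pmone)$ and $\alpha=-\frac{1}{2}$ (so that $\CalB=\CalB_{-\frac{1}{2}}$ is the Chebyshev system, which satisfies \Cref{assu:StandingAssumptions} with $K=\sqrt{2}$), thereby reducing the claim to estimates for the two quantities $\sigma_n(\CalA^r_{-\frac{1}{2},p}(\pmone);\CalB_{-\frac{1}{2}})_{\bfun(\pmone)}$ and $E_{\setn{0,\ldots,M}}(\CalA^r_{-\frac{1}{2},p}(\pmone);\CalB_{-\frac{1}{2}})_{\bfun(\pmone)}$ occurring on the right-hand side of that theorem, followed by a suitable choice of the truncation level $M$ as a power of $n$. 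The Legendre case \ref{legendre-wiener-sampling} is treated separately (in \Cref{thm:legendre-wiener-sampling}, via the change-of-measure device of \Cref{sect:unbounded}) because there the system $\CalB_{0}$ fails to be $\bfun$-bounded and \Cref{assu:StandingAssumptions} does not apply.

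The linear error is the easy ingredient. Writing $f=\sum_{k\in\N_0}\beta_k\gp^{-\frac{1}{2}}_k$ with $\mnorm{f}_{\CalA^r_{-\frac{1}{2},p}(\pmone)}\leq1$, the truncation $S_Mf\defeq\sum_{k=0}^M\beta_k\gp^{-\frac{1}{2}}_k\in V_{\setn{0,\ldots,M}}$ satisfies, using $\mnorm{\gp^{-\frac{1}{2}}_k}_{\bfun(\pmone)}\leq\sqrt{2}$ and the elementary inequality $\mnorm{\cdot}_{\ell^1}\leq\mnorm{\cdot}_{\ell^p}$ valid for $0<p\leq1$,
\begin{align*}
\mnorm{f-S_Mf}_{\bfun(\pmone)}
&\leq\sqrt{2}\sum_{k>M}\abs{\beta_k}
\leq\sqrt{2}\,(1+M)^{-r}\lr{\sum_{k>M}\abs{\beta_k(1+k)^{r}}^{p}}^{1/p}\\
&\leq\sqrt{2}\,(1+M)^{-r},
\end{align*}
exactly as in \eqref{eq:SM} and \eqref{eq:E_Misotrop}. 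Hence $E_{\setn{0,\ldots,M}}(\CalA^r_{-\frac{1}{2},p}(\pmone);\CalB_{-\frac{1}{2}})_{\bfun(\pmone)}\leq\sqrt{2}\,(1+M)^{-r}$.

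The serious ingredient, and the step I expect to be the main obstacle, is the best $n$-term estimate
\begin{equation*}
\sigma_n(\CalA^r_{-\frac{1}{2},p}(\pmone);\CalB_{-\frac{1}{2}})_{\bfun(\pmone)}\lesssim_{r,p}n^{-(r+\frac{1}{p}-\frac{1}{2})},
\end{equation*}
which is the content of \Cref{thm:approximation-chebyshev-wiener} and which I would simply invoke here. It is proved in the spirit of DeVore and Temlyakov \cite[Theorem~6.1]{DeVoreTe1995}: since $\mnorm{\gp^{-\frac{1}{2}}_k}_{\bfun(\pmone)}\leq\sqrt{2}$, for any finite $J\subset\N_0$ one has $\sigma_{\card{J}}(f;\CalB_{-\frac{1}{2}})_{\bfun(\pmone)}\leq\mnorm{f-\sum_{j\in J}\beta_j\gp^{-\frac{1}{2}}_j}_{\bfun(\pmone)}\leq\sqrt{2}\sum_{k\notin J}\abs{\beta_k}$, so it remains to bound the $\ell^1$-tail of $(\beta_k)_{k}$ under the best (greedy) choice of $J$ for coefficient sequences with $\sum_k\abs{\beta_k(1+k)^{r}}^{p}\leq1$; the extra factor $n^{-(\frac{1}{p}-\frac{1}{2})}$ compared with the linear rate $n^{-r}$ stems, as in \cite{DeVoreTe1995}, from choosing the discarded frequencies spread over the spectrum rather than concentrated. (Alternatively, for $r>\frac{1}{2}$ one can factor the embedding through a Chebyshev Sobolev-type Hilbert space via \Cref{lem:factor}, combining a weighted Stechkin estimate with the algebraic analogue of Belinskii's best $n$-term theorem \cite[Theorem~2]{Belinskii1989}, and treat $0<r\leq\frac{1}{2}$ by the direct argument.)

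To assemble the pieces, \Cref{thm:multivariate_algebraic} provides absolute constants $C,\widetilde{C}>0$ such that for all $n,M\in\N$ with $M\geq3$,
\begin{equation*}
\varrho_{\ceil{Cn\Log{n}^{3}\log(M)}}(\CalA^r_{-\frac{1}{2},p}(\pmone))_{L^2(\mu_{-\frac{1}{2}})}\leq\widetilde{C}\lr{\sigma_n(\CalA^r_{-\frac{1}{2},p}(\pmone);\CalB_{-\frac{1}{2}})_{\bfun(\pmone)}+\sqrt{2}\,(1+M)^{-r}}.
\end{equation*}
Choosing $M\defeq\ceil{n^{(r+\frac{1}{p}-\frac{1}{2})/r}}$, whose exponent is $\geq1$ so that $M\geq3$ once $n\geq3$ (the finitely many smaller $n$ being absorbed into the final constant), one gets $(1+M)^{-r}\leq n^{-(r+\frac{1}{p}-\frac{1}{2})}$ and $\log(M)\leq\log\lr{2n^{(r+\frac{1}{p}-\frac{1}{2})/r}}\lesssim_{r,p}\Log{n}$, whence $\ceil{Cn\Log{n}^{3}\log(M)}\leq\ceil{C_{r,p}n\Log{n}^{4}}$ for a suitable $C_{r,p}>0$ (using that $\varrho_m$ is non-increasing in $m$). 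Combining this with the best $n$-term bound yields
\begin{equation*}
\varrho_{\ceil{C_{r,p}n\Log{n}^{4}}}(\CalA^r_{-\frac{1}{2},p}(\pmone))_{L^2(\mu_{-\frac{1}{2}})}\leq\widetilde{C}_{r,p}\,n^{-(r+\frac{1}{p}-\frac{1}{2})},
\end{equation*}
and specialising to $p=1$ (where $\frac{1}{p}-\frac{1}{2}=\frac{1}{2}$) gives the stated Banach-case rate $n^{-(r+\frac{1}{2})}$.
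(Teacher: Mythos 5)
Your overall architecture is the paper's: reduce via \Cref{thm:multivariate_algebraic} to the two quantities $\sigma_n$ and $E_{\setn{0,\ldots,M}}$, bound the linear error by truncation exactly as in \eqref{eq:SM}/\eqref{eq:E_Misotrop}, choose $M\asymp n^{(r+\frac{1}{p}-\frac{1}{2})/r}$ so that $\log(M)\lesssim_{r,p}\Log{n}$, and defer the Legendre case to the preconditioned argument of \Cref{sect:unbounded}. All of that is correct and matches the paper.

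The gap is in the one step you yourself flag as the serious ingredient, the bound $\sigma_n(\CalA^r_{-\frac{1}{2},p}(\pmone);\CalB_{-\frac{1}{2}})_{\bfun(\pmone)}\lesssim n^{-(r+\frac{1}{p}-\frac{1}{2})}$. First, this is \emph{not} the content of \Cref{thm:approximation-chebyshev-wiener}: that corollary is a \emph{lower} bound $a_n(\CalA^r_{\alpha,p}(\pmone))_{L^2(\mu_\alpha)}\gtrsim n^{-r}$ on approximation numbers, used only for the comparison with linear methods; it says nothing about best $n$-term widths in $\bfun$. Second, the standalone argument you sketch --- bound $\sigma_{\card{J}}(f)$ by $\sqrt{2}\sum_{k\notin J}\abs{\beta_k}$ and then control the $\ell^1$-tail under a greedy choice of $J$ --- cannot produce the claimed rate: Stechkin-type greedy selection on the weighted $\ell^p$-ball yields only $n^{-(r+\frac{1}{p}-1)}$, i.e.\ the Legendre-type exponent. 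The additional factor $n^{-\frac{1}{2}}$ is a genuine $L^\infty$-phenomenon for uniformly bounded orthonormal systems (one can approximate an $\ell^1$-bounded coefficient combination by an $n$-term one with uniform error $\lesssim n^{-\frac{1}{2}}$ times the $\ell^1$-norm), and it is not obtained by discarding small coefficients, however they are spread over the spectrum. The paper sidesteps proving this from scratch: it substitutes $x=\cos(2\uppi\theta)$, observes that $f\mapsto f\circ\cos(2\uppi\bullet)$ maps the unit ball of $\CalA^r_{-\frac{1}{2},p}(\pmone)$ into that of the periodic isotropic class $\CalA^r_p(\T)$, and then quotes \cite[Theorem~6.1]{DeVoreTe1995} (with $d=1$) wholesale, which gives exactly $\sigma_n(\CalA^r_p(\T);\CalT)_{\bfun(\T)}\lesssim n^{-(r+\frac{1}{p}-\frac{1}{2})}$. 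You do name DeVore--Temlyakov as the ultimate source, so the fix is small, but as written your proof of the key estimate is both mis-cited and, in its self-contained form, quantitatively insufficient.
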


\begin{proof}
Here we only show part~\ref{chebyshev-wiener-sampling}.
For \ref{legendre-wiener-sampling}, see \Cref{thm:legendre-wiener-sampling}.
Via \Cref{thm:multivariate_algebraic}, it remains to estimate the quantities
$\sigma_n(\CalA^r_{-\frac{1}{2},p}(\pmone);\CalB_{-\frac{1}{2}})_{\bfun(\pmone)}$
and\\ $E_{\setn{0,\ldots,M}}(\CalA^r_{-\frac{1}{2},p}(\pmone);\CalB_{-\frac{1}{2}})_{\bfun(\pmone)}$.
From the definition of the Chebyshev polynomials $(\gp^{-\frac{1}{2}}_n)_{n\in\N_0}$ we have 
\begin{equation*}
\gp^{-\frac{1}{2}}_n(\cos(2\uppi\theta))=\sqrt{2}\cos(2\uppi n\theta)=\frac{\sqrt{2}}{2}(\ee^{2\uppi\ii n\theta}+\ee^{-2\uppi\ii n\theta}).
\end{equation*}
Thus, if $f\in \CalA^r_{-\frac{1}{2},p}(\pmone)$, we have $f \circ \cos(2\uppi\bullet) \in \CalA_p^r(\T)$ and 
\begin{equation*}
\mnorm{f\circ\cos(2\uppi\bullet)}_{\CalA^r_p(\T)}\leq \mnorm{f}_{\CalA^r_{-\frac{1}{2},p}(\pmone)}.
\end{equation*}
This gives
\begin{equation*}
	\sigma_n(\CalA^r_{-\frac{1}{2},p}(\pmone);\CalB_{-\frac{1}{2}})_{\bfun(\pmone)}
  \lesssim \sigma_n(\CalA^r_p(\T);\CalT)_{\bfun(\T)}
\end{equation*}
and
\begin{equation*}
	E_{\setn{0,\ldots,M}}(\CalA^r_{-\frac{1}{2},p}(\pmone);\CalB_{-\frac{1}{2}})_{\bfun(\pmone)}
  \lesssim E_{[-M,M]\cap \Z}(\CalA^r_p(\T);\CalT)_{\bfun(\T)}.
\end{equation*}
We know from \cite[Theorem~6.1]{DeVoreTe1995} that
\begin{equation*}
	\sigma_n(\CalA^r_p(\T);\CalT)_{\bfun(\T)} \lesssim n^{-(r+\frac{1}{p}-\frac{1}{2})}.
\end{equation*}
Together with \eqref{eq:E_Misotrop} and choosing $M$ appropriately gives the desired bound. 
\end{proof}
\begin{remark}
The bound in \Cref{thm:polynomial-wiener}\ref{legendre-wiener-sampling} is possibly not optimal.
We leave this as an open problem. 
 \end{remark}

For a comparison of the nonlinear sampling numbers and their linear counterparts in case of the Wiener classes for Chebyshev and Legendre polynomials, we compute the approximation numbers of these classes.
The result is based on a general insight about approximation  numbers of diagonal operators.

\begin{theorem}\label{thm:convex}
Let $(\gamma_n)_{n\in\N}$ be a non-increasing sequence of non-negative numbers, and
\begin{equation*}
D:\ell^1(\N)\to\ell^2(\N),\quad D((\beta_n)_{n\in\N})\defeq (\beta_n\gamma_n)_{n\in\N}.
\end{equation*}
Then we have for $0<p\leq 1$ and $n\in\N$ that 
\begin{equation*}
a_n(D^p)=a_n(D),
\end{equation*}
where $D^p:\ell^p(\N) \to \ell^2(\N)$ is the restriction of $D$ to $\ell^p(\N)$.
\end{theorem}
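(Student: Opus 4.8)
The plan is to reduce the whole statement to a single observation: the operator norm from $\ell^p(\N)$ into $\ell^2(\N)$ does not depend on $p\in(0,1]$. Concretely, I would first show that for every $0<p\le 1$ and every bounded linear operator $T:\ell^p(\N)\to\ell^2(\N)$,
\[
\|T\|_{\ell^p(\N)\to\ell^2(\N)} \;=\; \sup_{k\in\N}\|Te_k\|_{\ell^2(\N)},
\]
where $(e_k)_{k\in\N}$ is the standard basis. The inequality ``$\ge$'' is trivial since each $e_k$ lies in the $\ell^p$-unit ball. For ``$\le$'', note that $\|x\|_{\ell^p(\N)}\le 1$ forces $|x_k|\le 1$, hence $|x_k|\le|x_k|^p$ and $\sum_k|x_k|\le\sum_k|x_k|^p\le 1$; writing $x=\sum_k x_k e_k$ and using continuity of $T$ together with $\sum_k|x_k|\,\|Te_k\|_{\ell^2}\le\sup_k\|Te_k\|_{\ell^2}$ yields $\|Tx\|_{\ell^2}\le\sup_k\|Te_k\|_{\ell^2}$. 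For $p=1$ this is the familiar identity for operators on $\ell^1(\N)$, and the same chain of inequalities records the continuous inclusion $\ell^p(\N)\hookrightarrow\ell^1(\N)$.

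Next I would arrange the transfer of finite-rank operators between $\ell^1(\N)$ and $\ell^p(\N)$. Restriction is free: any bounded operator $A:\ell^1(\N)\to\ell^2(\N)$ of rank at most $n$, composed with the inclusion $\ell^p(\N)\hookrightarrow\ell^1(\N)$, is a bounded operator on $\ell^p(\N)$ of rank at most $n$ with the same action on every $e_k$. For the reverse direction I would use that a bounded operator $B:\ell^p(\N)\to\ell^2(\N)$ of rank at most $n$ can be written $Bx=\sum_{j=1}^n\phi_j(x)\,y_j$ with $y_j\in\ell^2(\N)$ and continuous linear functionals $\phi_j$ on $\ell^p(\N)$, and that $(\ell^p(\N))^\ast=\ell^\infty(\N)=(\ell^1(\N))^\ast$ under the pairing $\langle x,a\rangle=\sum_k x_k a_k$; hence each $\phi_j$ corresponds to a sequence $a^{(j)}\in\ell^\infty(\N)$ which equally defines a continuous functional on $\ell^1(\N)$, so $\widetilde B x\defeq\sum_{j=1}^n\langle x,a^{(j)}\rangle\,y_j$ is a bounded operator $\ell^1(\N)\to\ell^2(\N)$ of rank at most $n$ with $\widetilde B e_k=Be_k$ for all $k$. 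I expect this identification of the dual spaces for $0<p\le 1$ to be the only non-routine ingredient; the remainder is bookkeeping.

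Finally I would put the pieces together. If $A$ is any bounded operator on $\ell^1(\N)$ of rank at most $n$, then—writing also $A$ for its restriction to $\ell^p(\N)$—applying the operator-norm identity on both sides gives
\[
\|D^p-A\|_{\ell^p(\N)\to\ell^2(\N)} \;=\; \sup_{k}\|\gamma_k e_k - Ae_k\|_{\ell^2(\N)} \;=\; \|D-A\|_{\ell^1(\N)\to\ell^2(\N)},
\]
and taking the infimum over $A$ yields $a_n(D^p)\le a_n(D)$. Conversely, if $B$ is any bounded operator on $\ell^p(\N)$ of rank at most $n$, then its extension $\widetilde B$ satisfies $\|D-\widetilde B\|_{\ell^1(\N)\to\ell^2(\N)}=\sup_k\|\gamma_k e_k-Be_k\|_{\ell^2(\N)}=\|D^p-B\|_{\ell^p(\N)\to\ell^2(\N)}$, so $a_n(D)\le a_n(D^p)$; together these give $a_n(D^p)=a_n(D)$. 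The hypothesis on $(\gamma_n)$ is used only to ensure that $D$ is bounded and $D^p$ well-defined—a non-increasing non-negative sequence is bounded by $\gamma_1$—and no further monotonicity is needed.
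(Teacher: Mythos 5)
Your proof is correct, and it takes a genuinely different route from the paper's. The paper proves the nontrivial inequality $a_n(D)\leq a_n(D^p)$ by truncating $D$ to $D_T$, identifying approximation numbers with linear widths of the image of the unit ball (possible because the target is a Hilbert space), invoking Pinkus's theorem that linear widths are unchanged under passing to the convex hull together with the observation that $\co(D_T(B^p))=D_T(B^1)$ (the extreme points $\pm w_j$ of $D_T(B^1)$ already lie in $D_T(B^p)$), and finally letting $T\to\infty$ via Pietsch's explicit formula for approximation numbers of diagonal operators $\ell^1(\N)\to\ell^2(\N)$. You instead exploit two elementary facts: that for $0<p\leq 1$ the operator norm of any bounded $T:\ell^p(\N)\to\ell^2(\N)$ equals $\sup_k\mnorm{Te_k}_{\ell^2(\N)}$ (since the $\ell^p$ unit ball sits inside the $\ell^1$ unit ball and contains all $e_k$), and that $(\ell^p(\N))^\ast=\ell^\infty(\N)=(\ell^1(\N))^\ast$, which lets you move rank-$n$ competitors back and forth without changing their values on the $e_k$ and hence without changing the relevant operator norms; both inequalities then follow by matching competitors one-for-one. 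What your approach buys: it avoids the truncation/limiting step and the explicit Pietsch formula, it needs the monotonicity of $(\gamma_n)$ only to guarantee boundedness of $D$, and in fact it never uses the diagonal structure at all --- the same argument shows $a_n(T|_{\ell^p(\N)})=a_n(T)$ for an arbitrary bounded linear $T:\ell^1(\N)\to\ell^2(\N)$ (indeed for an arbitrary Banach target, since the Hilbert structure of $\ell^2(\N)$ is never invoked). What the paper's approach buys is a reusable geometric principle (widths see only the convex hull) that connects to \Cref{rem:widths} and \Cref{thm:approximation-numbers-linear-widths} and is of independent interest in this context. One point worth making explicit in a final write-up: the coefficient functionals $\phi_j$ in the representation $Bx=\sum_{j=1}^n\phi_j(x)y_j$ are continuous because they are coordinate maps on the finite-dimensional range composed with $B$; with that noted, the argument is complete.
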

\begin{proof}
By the embedding
$\ell^p(\N)\hookrightarrow\ell^1(\N)$, it holds $a_n(D^p)\leq a_n(D)$.
For $T\in\N$, let
\begin{equation*}
\gamma_{n,T}=\begin{cases}\gamma_n&\text{if }n\leq T,\\0&\text{else}\end{cases}
\end{equation*}
and
\begin{equation*}
D_T:\ell^1(\N)\to\ell^2(\N),\quad D_T((\beta_n)_{n\in\N})\defeq (\beta_n\gamma_{n,T})_{n\in\N}.
\end{equation*}
Denote by $D_T^p$ the restriction of $D_T$ to $\ell^p(\N)$, and abbreviate the unit ball $B_{\ell^p(\N)}$ of $\ell^p(\N)$ by $B^p$.
\Cref{rem:widths} yields
\begin{equation*}
\lambda_n(D_T(B^p))_{\ell^2(\N)} = a_n(D_T^p) \leq a_n(D^p)
\end{equation*}
for all $n,T\in\N$.
Next, we observe that $D_T(B^1)$ is a non-empty compact convex subset of the finite-dimensional space $D_T(\ell^1(\N))$.
Its extreme points are finite in number and given by the non-zero points among $w_j\defeq(\gamma_j\delta_{j,n})_{n\in \N}$ and $-w_j$ for $j\in\FirstN{T}$, where $\delta_{j,n}$ is the Kronecker delta.
Thus $D_T(B^1)$ is the convex hull of the points $\pm w_1,\ldots,\pm w_T$, i.e.,
\begin{equation*}
D_T(B^1)=\co\setn{\pm w_1,\ldots,\pm w_T}
\end{equation*}
Since $\pm w_j\in D_T(B^p)\subset D_T(B^1)$, we know
\begin{align*}
D_T(B^1)=\co\setn{\pm w_1,\ldots,\pm w_T}\subset\co(D_T(B^p))\subset \co(D_T(B^1))=D_T(B^1).
\end{align*}
In particular, we have $\co(D_T(B^p)) = D_T(B^1)$.
Now \cite[Theorem~II.4.1]{Pinkus1985} and \Cref{rem:widths} yield for all $n\in \N$
\begin{align*}
a_n(D_T^p)&=\lambda_n(D_T(B^p))_{\ell^2(\N)} = \lambda_n(\co(D_T(B^p)))_{\ell^2(\N)}\\
&= \lambda_n(D_T(B^1))_{\ell^2(\N)} = a_n(D_T).
\end{align*}
Taking \cite[Theorem~11.11.7]{Pietsch1980} into account, we have
\begin{align*}
a_n(D^p)&\geq \sup_{T\in\N} a_n(D_T^p)\\
&=\sup_{T\in\N}a_n(D_T)\\
&=\sup_{T\in\N}\sup\setcond{\lr{\frac{h-n+1}{\sum_{j=1}^h\gamma_{j,T}^{-2}}}^{\frac{1}{2}}}{h\in\N,h\geq n}\\
&=\sup_{T\in\N}\sup\setcond{\lr{\frac{h-n+1}{\sum_{j=1}^h\gamma_j^{-2}}}^{\frac{1}{2}}}{h\in\N,T\geq h\geq n}\\
&=\sup\setcond{\lr{\frac{h-n+1}{\sum_{j=1}^h\gamma_j^{-2}}}^{\frac{1}{2}}}{h\in\N,h\geq n}\\
&= a_n(D).
\end{align*}
This concludes the proof.
\end{proof}
Now we are able to compute the approximation numbers of the embeddings of the Wiener-type spaces for Chebyshev and Legendre polynomials into $L^2(\mu_\alpha)$ by reducing it to approximation numbers of suitable diagonal operators.
\begin{corollary}\label{thm:approximation-chebyshev-wiener}
Let $\alpha \in \setn{-\frac{1}{2},0}$, $r>0$, and $0 < p \leq 1$.
Then
\begin{equation*}
a_n(\CalA^r_{\alpha,p}(\pmone))_{L^2(\mu_\alpha)}\geq Cn^{-r}
\end{equation*}
\end{corollary}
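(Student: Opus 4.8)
The plan is to transport the embedding $\CalA^r_{\alpha,p}(\pmone)\hookrightarrow L^2(\mu_\alpha)$ to a diagonal operator between sequence spaces, apply \Cref{thm:convex}, and then read off the lower bound from the classical formula for approximation numbers of diagonal operators on $\ell^1$.

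First I would make the coefficient identification. Since $(\gp^\alpha_n)_{n\in\N_0}$ is orthonormal in $L^2(\mu_\alpha)$, each $f=\sum_{n\in\N_0}\beta_n\gp^\alpha_n\in\CalA^r_{\alpha,p}(\pmone)$ satisfies $\beta_n=\skpr{f}{\gp^\alpha_n}_{L^2(\mu_\alpha)}$, and Parseval gives $\mnorm{f}_{L^2(\mu_\alpha)}=\lr{\sum_{n\in\N_0}\abs{\beta_n}^2}^{1/2}$, while by definition $\mnorm{f}_{\CalA^r_{\alpha,p}(\pmone)}=\lr{\sum_{n\in\N_0}\abs{\beta_n(1+n)^r}^p}^{1/p}$. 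Hence $f\mapsto(\beta_n(1+n)^r)_{n\in\N_0}$ is an isometric isomorphism of $\CalA^r_{\alpha,p}(\pmone)$ onto $\ell^p(\N_0)$, under which $\id\colon\CalA^r_{\alpha,p}(\pmone)\to L^2(\mu_\alpha)$ corresponds to the diagonal map $(c_n)_{n\in\N_0}\mapsto(c_n(1+n)^{-r})_{n\in\N_0}$ from $\ell^p(\N_0)$ to $\ell^2(\N_0)$. After the harmless reindexing $\N_0\ni n\mapsto n+1\in\N$ this is exactly the operator $D^p$ of \Cref{thm:convex} with the non-increasing weights $\gamma_n\defeq n^{-r}$. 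As approximation numbers are invariant under isometric isomorphisms, $a_n(\CalA^r_{\alpha,p}(\pmone))_{L^2(\mu_\alpha)}=a_n(D^p)$, and \Cref{thm:convex} gives $a_n(D^p)=a_n(D)$. Note that this step uses only orthonormality of the system and no $\bfun$-boundedness, which is why it covers the Legendre case $\alpha=0$ on equal footing with the Chebyshev case.

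It then remains to bound $a_n(D)$ from below, where $D\colon\ell^1(\N)\to\ell^2(\N)$ is diagonal with entries $\gamma_j=j^{-r}$. By \cite[Theorem~11.11.7]{Pietsch1980} (already invoked in the proof of \Cref{thm:convex}),
\begin{equation*}
a_n(D)=\sup\setcond{\lr{\frac{h-n+1}{\sum_{j=1}^h j^{2r}}}^{1/2}}{h\in\N,\ h\geq n}\geq\lr{\frac{n+1}{\sum_{j=1}^{2n}j^{2r}}}^{1/2},
\end{equation*}
taking $h=2n$. Since $x\mapsto x^{2r}$ is increasing, $\sum_{j=1}^{2n}j^{2r}\leq\int_1^{2n+1}x^{2r}\,\dd x\leq\frac{(4n)^{2r+1}}{2r+1}$ for $n\geq 1$, and therefore $a_n(D)\geq\lr{\frac{2r+1}{4^{2r+1}}}^{1/2}n^{-r}$. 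This yields the claim with $C\defeq\lr{\frac{2r+1}{4^{2r+1}}}^{1/2}$, a constant depending only on $r$.

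There is no genuine obstacle here. The points deserving a little care are: verifying that the coefficient map is indeed onto $\ell^p(\N_0)$ (so that the identification with $D^p$ is exact and $\CalA^r_{\alpha,p}(\pmone)$ complete), which follows from $\ell^p(\N_0)\hookrightarrow\ell^2(\N_0)$ together with orthonormality; checking that $(n^{-r})_n$ is non-increasing so that \Cref{thm:convex} applies; and the bookkeeping of the index shift between $\N_0$ and $\N$, which does not affect the asymptotics.
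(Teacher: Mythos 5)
Your proposal is correct and follows essentially the same route as the paper: identify the embedding $\CalA^r_{\alpha,p}(\pmone)\hookrightarrow L^2(\mu_\alpha)$ with a diagonal operator $\ell^p\to\ell^2$ via the coefficient isometries, reduce to the $\ell^1$ case by \Cref{thm:convex}, and extract the lower bound from Pietsch's formula with the choice $h=2n$ and the integral estimate $\sum_{j=1}^{h}j^{2r}\lesssim h^{2r+1}$. The only cosmetic difference is that the paper cites \cite[Theorem~2.3.3]{Pietsch1987} for the invariance of approximation numbers under the isometric identifications, which you invoke without reference; your explicit constant $C=\bigl(\tfrac{2r+1}{4^{2r+1}}\bigr)^{1/2}$ is a harmless sharpening.
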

\begin{proof}
In the following commutative diagram
\begin{equation*}
\begin{tikzcd}
\ell^p(\N) \ar[swap]{d}{\cong} \ar{r}{D} & \ell^2(\N)\\
\CalA^r_{\alpha,p}(\pmone) \ar[hookrightarrow]{r}{\id}& L^2(\mu_\alpha) \ar[swap]{u}{\cong}
\end{tikzcd}
\end{equation*}
the two isometric isomorphisms indicated by $\cong$ are given by
\begin{align*}
(\beta_n)_{n\in\N}&\mapsto \sum_{n\in\N_0}\beta_{n+1}(1+n)^{-r}\gp^\alpha_n\\
\intertext{and}
\sum_{n\in\N_0}\beta_n \gp^\alpha_n&\mapsto (\beta_{n-1})_{n\in\N}.
\end{align*}
By \cite[Theorem~2.3.3]{Pietsch1987}, we find that 
\begin{equation*}
a_n(\CalA^r_{\alpha,p}(\pmone))_{L^2(\mu_\alpha)} = a_n(D),
\end{equation*}
where the diagonal operator $D:\ell^p \to \ell^2$ is given by $D((\beta_n)_{n\in\N})=(\beta_n n^{-r})_{n\in\N}$.
From \Cref{thm:convex} we obtain 
\begin{equation*}
a_n(D:\ell^p(\N)\to \ell^2(\N)) = a_n(D:\ell^1(\N)\to \ell^2(\N)).
\end{equation*}
Thanks to \cite[Theorem~11.11.7]{Pietsch1980}, we have a precise formula for these approximation numbers and obtain
\begin{align*}
&a_n(D:\ell^1(\N) \to  \ell^2(\N))\\
&= \sup\setcond{\lr{\frac{h-n+1}{\sum_{j=1}^h j^{2r}}}^{\frac{1}{2}}}{h\in\N, h\geq n}\\
&\gtrsim \sup\setcond{\lr{\frac{h-n+1}{h^{2r+1}}}^{\frac{1}{2}}}{h\in\N, h\geq n}\\
&\geq \lr{\frac{2n-n+1}{(2n)^{2r+1}}}^{\frac{1}{2}}\\
&\gtrsim n^{-r}.
\end{align*}
Here we used
\begin{equation*}
\sum_{j=1}^h j^{2r}\leq \int_1^{h+1}x^{2r}\dd x=\frac{(h+1)^{2r+1}-1}{2r+1}\lesssim h^{2r+1},
\end{equation*}
which in turn is a consequence of the monotonicity of $j\mapsto j^{2r}$.
\end{proof}
Comparing \Cref{thm:polynomial-wiener} and \Cref{thm:approximation-chebyshev-wiener}, the nonlinear sampling numbers show an acceleration of order $n^{-\frac{1}{p}+\frac{1}{2}}$ in the Chebyshev case and $n^{-\frac{1}{p}+1}$ in the Legendre case compared to the corresponding approximation numbers.
This shows that for quasi-Banach spaces, the acceleration is not limited to $n^{-\frac{1}{2}}$ as in the Banach space case, see \cite[Section~4.2]{NovakWo2008}.

\subsection{Sobolev spaces with mixed smoothness}
\label{chap:periodic-sobolev}

In this section, we substitute Sobolev spaces with bounded mixed derivatives
$H^r_{\mathrm{mix}}(\T^d)$ and $S^r_pW(\T^d)$ for $\CalF$ in \Cref{thm:improvedGeneralFourierBound}.

We begin with the corollary of \Cref{thm:improvedGeneralFourierBound} corresponding to the spaces
$H^r_{\mathrm{mix}}(\T^d)$ introduced in \Cref{def:wiener-sobolev}.
Note that we already know by \cite{DKU22} that 
\begin{equation*}
	d_n(H^r_{\mathrm{mix}}(\T^d))_{L^2(\T^d)}
  \asymp \varrho^{\mathrm{lin}}_n(H^r_{\mathrm{mix}}(\T^d))_{L^2(\T^d)}
  \asymp n^{-r}\Log{n}^{(d-1)r}.
\end{equation*}
Of course, we cannot expect to improve this bound with a nonlinear method,
see \cite[Theorem~4.8]{NovakWo2008}.
However, our approach gives a rather simple and semi-constructive method (we still have to use random information) for the following bound.
Note that \cite[Equation~(3.3)]{Temlyakov2021} gives a non-optimal and non-constructive upper bound on  $\varrho^{\mathrm{lin}}_n(H^r_{\mathrm{mix}}(\T^d))_{L^2(\T^d)}$ which is better than the one given by \Cref{thm:periodic-sobolev,lem:technical}.
\begin{corollary}\label{thm:periodic-sobolev}
Let $r > \frac{1}{2}$.
Then there exists a constant $C_{r,d} > 0$ such that
\begin{align*}
	\varrho_{\ceil{C_{r,d}n\Log{n}^3}}(H^r_{\mathrm{mix}}(\T^d))_{L^2(\T^d)}
  & \lesssim \sigma_n(H^r_{\mathrm{mix}}(\T^d);\CalT^d)_{\bfun(\T^d)}\\
	& \lesssim n^{-r}\Log{n}^{(d-1)r+\frac{1}{2}}. 	
\end{align*}
\end{corollary}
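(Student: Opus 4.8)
The plan is to obtain this as a direct corollary of \Cref{thm:improvedGeneralFourierBound}, following the same scheme as in the proof of \Cref{thm:sampling-wiener-periodic}: apply the general Fourier bound with $\CalF = H^r_{\mathrm{mix}}(\T^d)$, choose $M$ to be a fixed power of $n$, and then control the two terms on the right-hand side separately (all implied constants being allowed to depend on $r$ and $d$).

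For the linear approximation term $E_{[-M,M]^d\cap\Z^d}(H^r_{\mathrm{mix}}(\T^d);\CalT^d)_{\bfun(\T^d)}$ I would argue exactly as in the proof of \Cref{lem:bestmterm}, via the partial sum operator $S_M$. For $f \in H^r_{\mathrm{mix}}(\T^d)$ one has $\mnorm{f - S_M f}_{\bfun(\T^d)} \le \sum_{k\in\Z^d\setminus[-M,M]^d} \abs{\widehat f(k)}$, and splitting off the mixed weight $\prod_{j=1}^d (1+\abs{k_j})^r$ and applying the Cauchy--Schwarz inequality gives
\begin{equation*}
  \mnorm{f-S_M f}_{\bfun(\T^d)} \le \mnorm{f}_{H^r_{\mathrm{mix}}(\T^d)}\Bigl(\sum_{k\in\Z^d\setminus[-M,M]^d}\prod_{j=1}^d (1+\abs{k_j})^{-2r}\Bigr)^{1/2}.
\end{equation*}
Since $r>\tfrac12$, i.e.\ $2r>1$, every $k$ outside the cube has some coordinate of absolute value at least $M+1$, so the tail sum is bounded by $d\cdot\bigl(\sum_{\ell\in\Z}(1+\abs{\ell})^{-2r}\bigr)^{d-1}\cdot\sum_{\abs{\ell}\ge M+1}(1+\abs{\ell})^{-2r}\lesssim M^{1-2r}$. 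Hence $E_{[-M,M]^d\cap\Z^d}(H^r_{\mathrm{mix}}(\T^d);\CalT^d)_{\bfun(\T^d)}\lesssim M^{1/2-r}$.

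For the nonlinear term I would simply invoke $\sigma_n(H^r_{\mathrm{mix}}(\T^d);\CalT^d)_{\bfun(\T^d)}\lesssim n^{-r}\Log{n}^{(d-1)r+1/2}$, which is already quoted (from \cite[Theorem~2]{Belinskii1989}, see also \cite[Theorem~3]{Romanyuk1995}) in the proof of \Cref{lem:bestmterm} and which likewise uses $r>\tfrac12$ for the embedding into $\bfun(\T^d)$. Then I would choose $M\asymp n^{r/(r-1/2)}$ (say $M\defeq\max\{3,\ceil{n^{r/(r-1/2)}}\}$, the finitely many small $n$ being absorbed into the constant), so that $M^{1/2-r}\lesssim n^{-r}$ is dominated by $\sigma_n(H^r_{\mathrm{mix}}(\T^d);\CalT^d)_{\bfun(\T^d)}$, while $\log(M)\lesssim\Log{n}$ keeps the sampling budget under control: $\ceil{Cd\Log{d}\,n\Log{n}^2\log(M)}\le\ceil{C_{r,d}\,n\Log{n}^3}$. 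Plugging into \Cref{thm:improvedGeneralFourierBound} and using that $\varrho_m$ is non-increasing in $m$ then yields the first asserted inequality, and the last inequality is the $\sigma_n$ estimate just cited.

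There is no genuine obstacle — the statement is a corollary. The only points needing a little care are the Cauchy--Schwarz splitting, where the tail of $\prod_j(1+\abs{k_j})^{-2r}$ over $\Z^d\setminus[-M,M]^d$ is summable precisely because $r>\tfrac12$ (this, together with the embedding $H^r_{\mathrm{mix}}(\T^d)\hookrightarrow\bfun(\T^d)$, is where the hypothesis enters), and the elementary bookkeeping verifying that $M\asymp n^{r/(r-1/2)}$ keeps both the sample count at $n\Log{n}^3$ and the extra error term $M^{1/2-r}$ below the best $n$-term error, each up to an $(r,d)$-dependent constant.
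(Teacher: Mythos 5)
Your proposal is correct and follows essentially the same route as the paper: apply \Cref{thm:improvedGeneralFourierBound} with $\CalF=H^r_{\mathrm{mix}}(\T^d)$, quote \cite[Theorem~2]{Belinskii1989} for $\sigma_n$, bound the term $E_{[-M,M]^d\cap\Z^d}$ by a straightforward tail estimate, and choose $M$ as a fixed power of $n$ so that this term is dominated and $\log(M)\lesssim\Log{n}$. The only cosmetic difference is that you carry out the $E_J$ estimate explicitly via Cauchy--Schwarz (obtaining $M^{\frac{1}{2}-r}$ without a log factor, hence the slightly smaller exponent $M\asymp n^{r/(r-1/2)}$), whereas the paper cites \cite[Theorem~4.2.5]{DungTeUl2018} and points to the proof of \Cref{cor:p}; both are valid.
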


\begin{proof}
We simply plug in the bound \cite[Theorem~2]{Belinskii1989} for the quantity\linebreak
$\sigma_n(H^r_{\mathrm{mix}}(\T^d);\CalT^d)_{\bfun(\T^d)}$ appearing in the right-hand side
of \Cref{thm:improvedGeneralFourierBound}.
For the second quantity $E_{[-M,M]^d\cap\Z^d}(H^r_{\mathrm{mix}}(\T^d);\CalT^d)_{\bfun(\T^d)}$,
we use straightforward estimates, see \cite[Theorem~4.2.5]{DungTeUl2018}
and choose $M$ appropriately afterwards (see the proof of \Cref{cor:p} below). 
\end{proof}

Note that our method provides an alternative approach to \cite{KriegUl2021a} to disprove \cite[Conjecture~5.6.2]{DungTeUl2018}.
From the stated bound  in \Cref{thm:periodic-sobolev} it follows that 
compressed sensing methods yield better bounds than the ones coming from sparse grids (see \cite[Chapter~5]{DungTeUl2018}) when $d$ is large,
i.e., if $\frac{d-1}{2}>\frac{1}{2}+3r$.

Now we come to the $L^p$-counterparts $S^r_pW(\T^d)$ of the spaces $H^r_{\mathrm{mix}}(\T^d)$,
as defined in \Cref{chap:sobolev-appendix} for $r > 0$ and $1 < p < \infty$.
First of all, we may state similar results also in case of \emph{small smoothness},
i.e., for spaces $S^r_pW(\T^d)$ with $2<p<\infty$ and $\frac{1}{p}<r\leq \frac{1}{2}$,
as we will see below.
The square summability of the Kolmogorov numbers in $L^2(\T^d)$ is not required as in \cite{KriegUl2021b}.

In this particular range of parameters $\frac{1}{p} < r \leq \frac{1}{2}$ no constructive method is known so far as pointed out in the discussion at the end of \cite{TemlyakovUl2021, TemlyakovUl2022}.

The following general bound on the sampling numbers of $S^r_pW(\T^d)$ in $L^2(\T^d)$
is an extension of \Cref{thm:periodic-sobolev} which is the $p=2$ case.
\begin{corollary}\label{cor:p}
Let $1 < p < \infty$ and $\frac{1}{p}<r$.
Then there exists a constant $C_{r,p,d} > 0$ such that  
\begin{equation*}
	\begin{split}	
	\varrho_{\ceil{C_{r,p,d}n\Log{n}^3}}(S^r_pW(\T^d))_{L^2(\T^d)}
  & \lesssim \sigma_n(S^r_pW(\T^d);\CalT^d)_{\bfun(\T^d)}.
	\end{split}
\end{equation*}
\end{corollary}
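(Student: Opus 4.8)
\noindent
The plan is to obtain the corollary as a direct application of \Cref{thm:improvedGeneralFourierBound} with $\CalF=S^r_pW(\T^d)$, the only genuine work being a suitable choice of the box size $M$. Recall that for $r>\tfrac1p$ the space $S^r_pW(\T^d)$ embeds continuously, and in fact compactly, into $\bfun(\T^d)$, so \Cref{assu:StandingAssumptions} holds for the Fourier dictionary $\CalT^d$ and the hypotheses of \Cref{thm:improvedGeneralFourierBound} are satisfied. The theorem then gives, for all $n,M\in\N$ with $M\ge3$,
\begin{equation*}
\begin{split}
&\varrho_{\ceil{Cd\Log{d}n\Log{n}^2\log(M)}}(S^r_pW(\T^d))_{L^2(\T^d)}\\
&\qquad\le\widetilde C\bigl(\sigma_n(S^r_pW(\T^d);\CalT^d)_{\bfun(\T^d)}+E_{[-M,M]^d\cap\Z^d}(S^r_pW(\T^d);\CalT^d)_{\bfun(\T^d)}\bigr),
\end{split}
\end{equation*}
and it remains to absorb the second summand into the first by taking $M$ to be an appropriate polynomial in $n$; since $M$ enters only through $\log(M)$, this costs at most one extra logarithmic factor in the number of samples.

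\noindent
First I would bound the linear-approximation term. Using $S^r_pW(\T^d)\hookrightarrow\bfun(\T^d)$ together with the de la Vallée Poussin quasi-projection of coordinatewise degree $\asymp M/d$ from \Cref{sub:FourierBasis1D} (whose range lies in $V_{[-M,M]^d\cap\Z^d}$ and which is bounded on $\bfun(\T^d)$), and invoking the standard estimates for trigonometric approximation of mixed-smoothness functions on cubes, exactly as in the proof of \Cref{thm:periodic-sobolev} and \cite[Theorem~4.2.5]{DungTeUl2018}, one obtains polynomial decay
\begin{equation*}
E_{[-M,M]^d\cap\Z^d}(S^r_pW(\T^d);\CalT^d)_{\bfun(\T^d)}\lesssim M^{-\delta}\,\Log{M}^{\,c}
\end{equation*}
for suitable $\delta=\delta(r,p)>0$ and $c=c(d)\ge0$; the precise values are irrelevant, only the positivity of $\delta$ matters.

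\noindent
Next I would fix $M\defeq\ceil{n^{\beta}}$ with $\beta=\beta(r,p,d)$ chosen large enough that $M^{-\delta}\Log{M}^{c}\lesssim\sigma_n(S^r_pW(\T^d);\CalT^d)_{\bfun(\T^d)}$ for every $n$. This is possible because the right-hand side decays only polynomially in $n$: by the known sharp asymptotics for the best $n$-term trigonometric widths of $S^r_pW(\T^d)$ in $\bfun(\T^d)$ (cf. \cite{Temlyakov2015} and \cite[Chapter~7]{DungTeUl2018}) one has $\sigma_n(S^r_pW(\T^d);\CalT^d)_{\bfun(\T^d)}\gtrsim n^{-\kappa}$ for some $\kappa=\kappa(r,p,d)>0$. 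For this $M$ we have $\log(M)\le\beta\Log{n}$, hence $\ceil{Cd\Log{d}n\Log{n}^2\log(M)}\le\ceil{C_{r,p,d}\,n\Log{n}^3}$, and plugging the two displayed estimates into \Cref{thm:improvedGeneralFourierBound}, after adjusting $\widetilde C$, yields the claim. The finitely many small values of $n$ for which $M<3$ are handled by enlarging the implied constant, using that $\varrho_m(S^r_pW(\T^d))_{L^2(\T^d)}\le\mnorm{\id\colon S^r_pW(\T^d)\to L^2(\T^d)}$ and that $\sigma_n(S^r_pW(\T^d);\CalT^d)_{\bfun(\T^d)}$ is bounded below on any bounded range of $n$. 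The argument is uniform in $1<p<\infty$ and $r>\tfrac1p$, so it in particular covers the small-smoothness regime $\tfrac1p<r\le\tfrac12$, $2<p<\infty$, mentioned before the statement.

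\noindent
The step I expect to be the main obstacle is the estimate of $E_{[-M,M]^d\cap\Z^d}(S^r_pW(\T^d);\CalT^d)_{\bfun(\T^d)}$: one must pass from the $L^p$-based mixed smoothness to an $\bfun$-estimate — this is where the restriction $r>\tfrac1p$ and the loss $r\mapsto r-\tfrac1p$ enter — and control full-cube (rather than hyperbolic-cross) approximation while tracking the admissible logarithmic factors. However, since only some polynomial decay in $M$ is needed and $M$ may be taken to be an arbitrarily large power of $n$, none of these logarithmic subtleties affect the final statement; the only other point requiring care is the legitimacy of the comparison $E_M\lesssim\sigma_n$, for which it suffices to quote the standard polynomial lower bound for the best $n$-term trigonometric width of $S^r_pW(\T^d)$ in $\bfun(\T^d)$.
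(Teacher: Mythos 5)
Your proposal is correct and takes essentially the same route as the paper: invoke \Cref{thm:improvedGeneralFourierBound}, bound $E_{[-M,M]^d\cap\Z^d}(S^r_pW(\T^d);\CalT^d)_{\bfun(\T^d)}$ polynomially via the hyperbolic-cross estimate \cite[Theorem~4.2.5]{DungTeUl2018} (using $Q_m\subset[-2^m,2^m]^d$), and pick $M$ as a power of $n$ so that this term is absorbed into $\sigma_n$ by a polynomial lower bound on the best $n$-term width. The paper is just more explicit (it takes $M\asymp n^{2r(r-\frac{1}{p})^{-1}}$, giving $E\lesssim n^{-r}\lesssim\sigma_n$), and your detour through a de la Vallée Poussin quasi-projection for the $E$-term is unnecessary, since $E_J$ is already a best approximation from $V_J$.
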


\begin{proof}
We invoke \Cref{thm:improvedGeneralFourierBound} together with \cite[Theorem~4.2.5]{DungTeUl2018},
which is a straightforward consequence of the Littlewood--Paley characterization in the appendix,
see \Cref{FourierW}.
Note that the step hyperbolic cross $Q_m$ defined in \cite[Equation~(2.3.1)]{DungTeUl2018}
is contained in $[-2^m,2^m]^d \cap \Z^d$ and so
\begin{align*}
	E_{[-2^m,2^m]^d \cap \Z^d}(S^r_pW(\T^d);\CalT^d)_{\bfun(\T^d)}
  & \leq E_{Q_m}(S^r_pW(\T^d);\CalT^d)_{\bfun(\T^d)}\\
	& \lesssim 2^{-(r-\frac{1}{p})m}m^{(d-1)\kappa}.
\end{align*}
for some $\kappa > 0$.
Choosing $M \defeq 2^m$ with $n^{2r(r-\frac{1}{p})^{-1}}\leq M\leq 2n^{2r(r-\frac{1}{p})^{-1}}$ implies
\begin{equation*}
  E_{[-M,M]^d \cap \Z^d}(S^r_pW(\T^d))_{\bfun(\T^d)} \lesssim n^{-r}\lesssim \sigma_n(S^r_pW(\T^d))_{\bfun(\T^d)},
\end{equation*}
see \cite[Theorem~2]{Belinskii1989}.
\end{proof}

Upper bounds for the quantity $\sigma_n(S^r_pW(\T^d);\CalT^d)_{\bfun(\T^d)}$
appearing in \Cref{cor:p} are available under constraints on the parameters $p$ and $r$.
The case of small smoothness, i.e., $2<p<\infty$ and $\frac{1}{p}<r\leq \frac{1}{2}$
follows from \cite[Theorems~6.1,~6.2, and~6.3]{TemlyakovUl2022}.
Note that the bounds on $\varrho_n(S^r_pW(\T^d))_{L^2(\T^d)}$ given by \Cref{thm:p>2,lem:technical} are not optimal since \cite[Theorem~5.1]{TemlyakovUl2021} already gives better bounds on $\varrho^{\mathrm{lin}}_n(S^r_pW(\T^d))_{L^2(\T^d)}$.
\begin{corollary}\label{thm:p>2}
  Let $2 < p < \infty$.
  There exists a constant $C_{r,p,d} > 0$ such that the following holds.
  \begin{enumerate}[label={(\roman*)},leftmargin=*,align=left,noitemsep]
    \item{In case $\frac{1}{p}<r<\frac{1}{2}$, we have
          \begin{equation*}
            \varrho_{\ceil{C_{r,p,d} n\Log{n}^3}}(S^r_pW(\T^d))_{L^2(\T^d)}
            \lesssim n^{-r}\Log{n}^{(d-1)(1-r)+r}. 	
          \end{equation*}
}
  \item{In case $r=\frac{1}{2}$, we have
        \begin{align*}
&\varrho_{\ceil{C_{r,p,d}n\Log{n}^3}}(S^r_pW(\T^d))_{L^2(\T^d)}\\
&\lesssim n^{-r}\Log{n}^{(d-1)(1-r)+r}(\log\log(n+2))^{r+1}.   
        \end{align*}
}
  \end{enumerate}
\end{corollary}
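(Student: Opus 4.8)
The plan is to deduce this corollary directly from \Cref{cor:p} by feeding in the known upper bounds for the best $n$-term trigonometric width of $S^r_pW(\T^d)$ in $\bfun(\T^d)$ that are available in the small-smoothness regime. Since $2<p<\infty$ in particular satisfies $1<p<\infty$, and the hypothesis $\frac1p<r$ is assumed, \Cref{cor:p} applies verbatim and gives
\begin{equation*}
\varrho_{\ceil{C_{r,p,d}n\Log{n}^3}}(S^r_pW(\T^d))_{L^2(\T^d)}
\lesssim \sigma_n(S^r_pW(\T^d);\CalT^d)_{\bfun(\T^d)}.
\end{equation*}
So everything reduces to bounding the right-hand side in the two parameter ranges.

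For the best $n$-term bound I would invoke \cite[Theorems~6.1,~6.2, and~6.3]{TemlyakovUl2022}, which treat precisely the small-smoothness case $2<p<\infty$, $\frac1p<r\leq\frac12$. For $\frac1p<r<\frac12$ these yield $\sigma_n(S^r_pW(\T^d);\CalT^d)_{\bfun(\T^d)}\lesssim n^{-r}\Log{n}^{(d-1)(1-r)+r}$, and at the endpoint $r=\frac12$ one picks up the additional double-logarithmic factor, i.e.\ $\sigma_n(S^{1/2}_pW(\T^d);\CalT^d)_{\bfun(\T^d)}\lesssim n^{-1/2}\Log{n}^{(d-1)/2+1/2}(\log\log(n+2))^{r+1}$. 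Plugging either estimate into the displayed inequality above produces exactly the two claimed bounds, after absorbing the bounded quantities arising for small $n$ into $C_{r,p,d}$ (for $n\geq 2$ one has $\log n\asymp\Log{n}$ and $\log\log n\asymp\log\log(n+2)$, with the trivial modification at $n=1$).

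The only point requiring genuine care is the transcription of the cited results: one must check that the space $S^r_pW(\T^d)$ as defined here in \Cref{chap:sobolev-appendix} (via the Littlewood--Paley characterization of \Cref{FourierW}) agrees with the one used in \cite{TemlyakovUl2022}, that their error is measured in the uniform norm — which coincides with $\mnorm{\cdot}_{\bfun(\T^d)}$ on trigonometric polynomials — and that the logarithmic exponents $(d-1)(1-r)+r$ and the power $(\log\log)^{r+1}$ are stated there in exactly this form (rather than for a differently normalized class, or with a $\Log{n}$-versus-$\log n$ discrepancy that would matter inside an exponent). No analytic input beyond \Cref{cor:p} and the cited widths is needed; the rest is bookkeeping of logarithmic factors.
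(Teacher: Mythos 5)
Your proposal matches the paper's own argument: the corollary is obtained by applying \Cref{cor:p} (which already covers all $1<p<\infty$ with $r>\frac1p$) and then substituting the small-smoothness bounds on $\sigma_n(S^r_pW(\T^d);\CalT^d)_{\bfun(\T^d)}$ from \cite[Theorems~6.1, 6.2, and~6.3]{TemlyakovUl2022}, exactly as you describe. Your added caution about verifying the normalization of the cited widths is sensible bookkeeping but does not change the substance; the proof is correct and essentially identical to the paper's.
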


From \Cref{cor:p} together with \cite[Theorem~2.9]{Temlyakov2015},
see also \cite[Theorem~7.5.2]{DungTeUl2018}, we obtain the following result
for the case $1 < p < 2$ and $\frac{1}{p} < r$.

\begin{corollary}\label{thm:p<2}
  Let $1 < p < 2$ and $\frac{1}{p} < r$.
  There exists a constant $C_{r,p,d} > 0$ such that
  \begin{align}
&\varrho_{\ceil{C_{r,p,d} n\Log{n}^3}}(S^r_pW(\T^d))_{L^2(\T^d)}\nonumber\\
&\lesssim \lr{\frac{\Log{n}^{d-1}}{n}}^{r-\frac{1}{p}+\frac{1}{2}}\Log{n}^{\frac{1}{2}-(d-1)(\frac{1}{p}-\frac{1}{2})}.\label{eq:p<2}
  \end{align}            
\end{corollary}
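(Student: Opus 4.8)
The plan is to obtain the asserted bound as the combination of two ingredients that are both already available: the abstract sampling estimate of \Cref{cor:p}, and the sharp upper bound for the best $n$-term trigonometric width of $S^r_pW(\T^d)$ in $\bfun(\T^d)$ due to Temlyakov. The corollary is therefore essentially a substitution; the work has been done in \Cref{cor:p} (and, before that, in \Cref{thm:improvedGeneralFourierBound}).

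First I would note that the hypotheses $1<p<2$ and $\frac{1}{p}<r$ in particular imply $1<p<\infty$ and $\frac{1}{p}<r$, so \Cref{cor:p} applies verbatim and produces a constant $C_{r,p,d}>0$ with
\begin{equation*}
\varrho_{\ceil{C_{r,p,d}n\Log{n}^3}}(S^r_pW(\T^d))_{L^2(\T^d)}
\lesssim \sigma_n(S^r_pW(\T^d);\CalT^d)_{\bfun(\T^d)}.
\end{equation*}
Here the linear tail term $E_{[-M,M]^d\cap\Z^d}(S^r_pW(\T^d);\CalT^d)_{\bfun(\T^d)}$ has already been absorbed inside the proof of \Cref{cor:p} by choosing $M$ polynomially in $n$, so it does not need to be revisited, and the number of sampling points is already of the stated form $\ceil{C_{r,p,d}n\Log{n}^3}$.

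Next I would insert into the right-hand side the estimate from \cite[Theorem~2.9]{Temlyakov2015} (equivalently \cite[Theorem~7.5.2]{DungTeUl2018}), which in the range $1<p<2$, $\frac{1}{p}<r$ reads
\begin{equation*}
\sigma_n(S^r_pW(\T^d);\CalT^d)_{\bfun(\T^d)}
\lesssim \lr{\frac{\Log{n}^{d-1}}{n}}^{r-\frac{1}{p}+\frac{1}{2}}\Log{n}^{\frac{1}{2}-(d-1)(\frac{1}{p}-\frac{1}{2})}.
\end{equation*}
Chaining the two displayed inequalities yields \eqref{eq:p<2} with an implied constant depending on $r$, $p$, and $d$.

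The only thing to be careful about — and it is bookkeeping rather than a genuine obstacle — is to make sure that the mixed-smoothness Sobolev space $S^r_pW(\T^d)$ as defined in \Cref{chap:sobolev-appendix} and the normalization of the multivariate trigonometric system $\CalT^d$ match the conventions used in \cite{Temlyakov2015,DungTeUl2018}; since both are the standard objects this is immediate. I expect no analytic difficulty in this proof: all the substance sits in \Cref{thm:improvedGeneralFourierBound}, \Cref{cor:p}, and the cited best $n$-term result.
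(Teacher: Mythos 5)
Your proposal is correct and matches the paper's own argument exactly: the paper derives \Cref{thm:p<2} by combining \Cref{cor:p} with the best $n$-term bound from \cite[Theorem~2.9]{Temlyakov2015} (see also \cite[Theorem~7.5.2]{DungTeUl2018}), precisely as you do. No further comment is needed.
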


\begin{remark}
\Cref{thm:p<2} is remarkable since it shows a classical situation where the general sampling numbers decay faster than the approximation numbers (linear widths)
$a_n(S^r_pW(\T^d))_{L^2(\T^d)}$. 
Indeed, we have 
\begin{equation*}
	a_n(S^r_pW(\T^d))_{L^2(\T^d)}
  = d_n(S^r_pW(\T^d))_{L^2(\T^d)}
  \asymp \lr{\frac{\Log{n}^{d-1}}{n}}^{r-\frac{1}{p}+\frac{1}{2}},
\end{equation*}
see \cite[Theorem~4.5.1]{DungTeUl2018} and hence $\varrho_n = o(a_n)$ if $d$ is sufficiently large.
This follows from the bound in \eqref{eq:p<2} together with \Cref{lem:technical} in the appendix.   
In this situation the linear sampling numbers studied in \cite{ByrenheidUl2017,DKU22} have a slower decay (equal to the approximation numbers, see \cite[Equation~(1.8)]{ByrenheidUl2017}) than the nonlinear counterparts.
The main rate in \Cref{thm:p<2} is optimal and cannot be improved.
This follows from a (univariate) fooling function argument in \cite[Theorem~23]{NoTr06} and \cite[Theorem~1]{Dung2009}.
Let us finally emphasize that $\varrho_n = o(a_n)$ is a multivariate mixed smoothness effect which is not present in the isotropic situation as shown by Heinrich \cite{Heinrich2009}, see also \cite[Open Problem~18, page~123]{NovakWo2008}.  
\end{remark}

\section{Beyond bounded orthonormal systems:\\Legendre polynomials}%
\label{sect:unbounded}

In this section, we show how our arguments can be used to derive bounds for the sampling numbers if the non-periodic Wiener classes $\CalA^r_{0,p}$ defined with respect to the orthonormalized Legendre polynomials $(L_n)_{n\in\N}$.
For these, \Cref{thm:MainAbstractResult} is not immediately applicable, since the Legendre polynomials are \emph{not} a bounded orthonormal system. 
Indeed, let the \emph{non-normalized} Legendre polynomials $(P_n)_{n\in\N_0}$ be defined as in \cite[(4.2.1)]{Lebedev1972} via Rodrigues's formula, i.e.,
\begin{equation*}
P_n(x)=\frac{1}{2^nn!}\frac{\dd^n}{\dd x^n}(x^2-1)^n,
\end{equation*}
from which it is easy to deduce that $P_n$ is a polynomial of degree $n$.
Then \cite[(4.5.1) and (4.5.2)]{Lebedev1972} show that
\begin{equation*}
\int_{-1}^1 P_n(x)P_m(x)\dd x=\frac{2}{2n+1}\delta_{n,m}
\end{equation*}
so that $\CalB_0=(L_n)_{n\in\N_0}$ with
\begin{equation*}
L_n=\sqrt{n+\frac{1}{2}}P_n
\end{equation*}
is a family of polynomials with $\deg(L_n)=n$ and
\begin{equation*}
\int_{-1}^1 L_n(x)L_m(x)\dd x=\delta_{n,m},
\end{equation*}
called the \emph{orthonormalized Legendre polynomials}.\footnote{With the notation from \Cref{chap:algebraic-polynomials}, we have $p^0_n=\sqrt{2}L_n$.}
As shown in \cite[(4.4.2) and (4.2.7)]{Lebedev1972} we have 
\begin{equation}
P_n(1)=1\quad\text{and}\quad\abs{P_n(x)}\leq 1\text{ for }x\in \pmone
\end{equation}
which easily shows for $\Omega=(-1,1)$ that
\begin{equation}
\mnorm{L_n}_{\bfun(\pmone)}=\sqrt{n+\frac{1}{2}}\stackrel{n\to\infty}{\longrightarrow}\infty.
\end{equation}
We will circumvent this issue by a \enquote{preconditioning step} as in \cite{RauhutWard2012}.
Precisely, we introduce the weight function
\begin{equation}\label{eq:legendre-weight}
w:\pmone\to (0,\infty),\quad w(x)=\sqrt{\uppi}(1-x^2)^{\frac{1}{4}}
\end{equation}
and the Borel measure $\mu$ on $\pmone$ given by its density as
\begin{equation*}
\dd\mu(x)=\frac{1}{\uppi}\frac{1}{\sqrt{1-x^2}}\dd x.
\end{equation*}
We then have
\begin{equation}
\mu(\pmone)=\frac{1}{\uppi}\int_{-1}^1\frac{1}{\sqrt{1-x^2}}\dd x=1
\end{equation}
so that $\mu$ is a probability measure.
Next, we note (with the Lebesgue measure $\lambda$ on $\pmone$) that\footnote{With the notation from \Cref{chap:algebraic-polynomials}, we have $\lambda=2\mu_0$.}
\begin{equation*}
\Phi: L^2(\lambda)\to L^2(\mu),\quad f\mapsto w\cdot f
\end{equation*}
is an isometric isomorphism which follows from the computation
\begin{align}
\mnorm{w\cdot f}_{L^2(\mu)}^2&=\int_{-1}^1\abs{w(x)f(x)}^2\frac{1}{\uppi}\frac{1}{\sqrt{1-x^2}}\dd x\nonumber\\
&=\int_{-1}^1\abs{f(x)}^2\dd x=\mnorm{f}_{L^2(\lambda)}^2\label{eq:isometry}
\end{align}
Thus, if we define $(b_n)_{n\in\N_0}$ by
\begin{equation*}\label{eq:preconditioned-legendre}
b_n=w\cdot L_n,\quad\text{i.e.,}\quad b_n(x)=\sqrt{\uppi}(1-x^2)^{\frac{1}{4}}L_n(x)
\end{equation*}
then we have on the one hand that 
\begin{equation*}
\int_{-1}^1 b_n(x)b_m(x)\dd \mu(x)=\int_{-1}^1 L_n(x)L_m(x)\dd x=\delta_{n,m}
\end{equation*}
so that $(b_n)_{n\in\N_0}$ is an orthonormal system in $L^2(\mu)$.
On the other hand, \cite[\textsection I.7 (6.11)]{Freud1969} implies that
\begin{equation*}
\abs{b_n(x)}\leq 4\sqrt{\uppi}\qquad\text{for all }n\in\N_0\text{ and }x\in \pmone 
\end{equation*}
so that $\CalB_0^\natural=(b_n)_{n\in\N_0}$ is a bounded orthonormal system with $K(\CalB_0^\natural)=4\sqrt{\uppi}$.

The remaining issue is that the projection operator associated to the Legendre polynomials is not well compatible with the \enquote{preconditioning step} of multiplying with the weight $w$.
We circumvent this second issue by invoking \Cref{thm:restrictedbestnterm}, which does not rely on projection operators, and obtain the following general result for sampling numbers in terms of restricted best $n$-term approximation errors using Legendre polynomials.
\begin{theorem}\label{thm:weighted-legendre}
With $w$ as in \Cref{eq:legendre-weight}, consider the space
\begin{equation*}
\bfun_w(\pmone)\defeq \setcond{f:\pmone\to\CC}{f\text{ measurable and }\mnorm{w\cdot f}_{\bfun(\pmone)}<\infty}
\end{equation*}
with norm $\mnorm{f}_{\bfun_w(\pmone)}\defeq\mnorm{w\cdot f}_{\bfun(\pmone)}$.
Let $\CalF\hookrightarrow\bfun_w(\pmone)$ be a quasi-normed space, and let $n\in\N$ and $J^\ast\subset \N_0$ and $N\defeq\card{J^\ast}$.
Then, with the Lebesgue measure $\lambda$ on $\pmone$, we have
\begin{equation*}
\varrho_{\ceil{Cn\Log{n}^3\Log{N}}}(\CalF)_{L^2(\lambda)}\leq \widetilde{C}\sigma_{n,J^\ast}(\CalF;\CalB_0)_{\bfun_w(\pmone)}
\end{equation*}
for all $n\in\N$, where $\CalB_0=(L_n)_{n\in\N}$ is the family of orthonormalized Legendre polynomials and $C$, $\widetilde{C}$ are universal constants.
\end{theorem}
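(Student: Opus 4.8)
The plan is to reduce the claim to \Cref{thm:restrictedbestnterm} applied to the preconditioned Legendre dictionary $\CalB_0^\natural=(b_n)_{n\in\N_0}$ with $b_n=w\cdot L_n$, which by the discussion preceding the theorem is a bounded orthonormal system in $L^2(\mu)$ with $K(\CalB_0^\natural)=4\sqrt{\uppi}$, and then to transport the resulting estimate back along the isometric isomorphism $\Phi:L^2(\lambda)\to L^2(\mu)$, $f\mapsto w\cdot f$ from \eqref{eq:isometry}. First I would introduce the quasi-normed space $\widetilde{\CalF}\defeq\Phi(\CalF)=\setcond{w\cdot f}{f\in\CalF}$, equipped with the transported quasi-norm $\mnorm{\Phi f}_{\widetilde{\CalF}}\defeq\mnorm{f}_{\CalF}$. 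Since $\mnorm{\Phi f}_{\bfun(\pmone)}=\mnorm{w\cdot f}_{\bfun(\pmone)}=\mnorm{f}_{\bfun_w(\pmone)}$, the assumed embedding $\CalF\hookrightarrow\bfun_w(\pmone)$ turns into a continuous embedding $\widetilde{\CalF}\hookrightarrow\bfun(\pmone)$, so that \Cref{assu:StandingAssumptions} and the hypotheses of \Cref{thm:restrictedbestnterm} are met for the probability space $(\pmone,\mu)$, the dictionary $\CalB_0^\natural$, the space $\widetilde{\CalF}$, and the given $n$ and $J^\ast$ (note that $K^2=16\uppi$ is an absolute constant and can be absorbed into $C$).

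Applying \Cref{thm:restrictedbestnterm} with $\eta\defeq\sigma_{n,J^\ast}(\widetilde{\CalF};\CalB_0^\natural)_{\bfun(\pmone)}$ and $m\defeq\ceil{C\cdot K^2\cdot n\cdot\Log{n}^3\cdot\Log{N}}$ yields, for $t_1,\ldots,t_m\overset{iid}{\sim}\mu$ and with probability at least $1-N^{-\gamma\Log{n}^3}$, the bound $\sup_{g\in B_{\widetilde{\CalF}}}\mnorm{g-R_\eta(g(t_1),\ldots,g(t_m))}_{L^2(\mu)}\leq\widetilde{C}\eta$, with the approximant lying in $V_{J^\ast}(\CalB_0^\natural)$. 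The key point is that the samples of $g\defeq\Phi f=w\cdot f$ can be computed from those of $f$, since $w$ is a fixed, explicitly known function: $g(t_\ell)=w(t_\ell)\,f(t_\ell)$. I would therefore define a reconstruction map $\widetilde{R}:\CC^m\to L^2(\lambda)$ by $\widetilde{R}(y)\defeq\Phi^{-1}\bigl(R_\eta(w(t_1)y_1,\ldots,w(t_m)y_m)\bigr)$. This is well defined even though $w$ vanishes at $\pm1$: the output of $R_\eta$ lies in $V_{J^\ast}(\CalB_0^\natural)=w\cdot V_{J^\ast}(\CalB_0)$, so $\Phi^{-1}$ of it is a genuine polynomial in $V_{J^\ast}(\CalB_0)$, in particular a bounded measurable function on all of $\pmone$. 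For $f\in B_\CalF$ we then have $g=\Phi f\in B_{\widetilde{\CalF}}$, and \eqref{eq:isometry} gives
\begin{align*}
\mnorm{f-\widetilde{R}(f(t_1),\ldots,f(t_m))}_{L^2(\lambda)}
&=\mnorm{g-R_\eta(g(t_1),\ldots,g(t_m))}_{L^2(\mu)}\\
&\leq\widetilde{C}\eta.
\end{align*}
Since the favorable event has positive probability, a single deterministic choice of $t_1,\ldots,t_m$ works for all $f\in B_\CalF$ simultaneously, whence $\varrho_m(\CalF)_{L^2(\lambda)}\leq\widetilde{C}\eta$.

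It remains to identify $\eta$ with $\sigma_{n,J^\ast}(\CalF;\CalB_0)_{\bfun_w(\pmone)}$. Multiplication by $w$ is at once a bijection from $\Sigma_n(\CalB_0)\cap V_{J^\ast}(\CalB_0)$ onto $\Sigma_n(\CalB_0^\natural)\cap V_{J^\ast}(\CalB_0^\natural)$ and an isometry from $\bfun_w(\pmone)$ onto $\bfun(\pmone)$; combining these two facts gives $\sigma_{n,J^\ast}(\Phi f;\CalB_0^\natural)_{\bfun(\pmone)}=\sigma_{n,J^\ast}(f;\CalB_0)_{\bfun_w(\pmone)}$ for every $f$, and taking the supremum over $B_\CalF$ yields $\eta=\sigma_{n,J^\ast}(\CalF;\CalB_0)_{\bfun_w(\pmone)}$. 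Together with the previous paragraph, this proves the claim, with $C,\widetilde{C}$ absolute. I expect the only genuinely delicate point --- and hence the main obstacle --- to be the bookkeeping around the degenerate weight $w$: one must check that $\widetilde{R}$ returns a well-defined function on all of $\pmone$ (which holds precisely because its range is a finite-dimensional space of honest polynomials, not merely of functions defined $\lambda$-almost everywhere) and that $\Phi$ transports the unit ball, the $L^2$-error and the restricted best $n$-term quantity faithfully; there is no deeper analytic difficulty beyond this translation.
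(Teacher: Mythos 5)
Your proposal is correct and follows essentially the same route as the paper's own proof: precondition with $w$ to obtain the bounded orthonormal system $\CalB_0^\natural$, transport $\CalF$ to $\CalF^\natural=w\cdot\CalF$, apply \Cref{thm:restrictedbestnterm}, and pull the reconstruction back via $\frac{1}{w}$ using the isometry \eqref{eq:isometry}. The only (harmless) difference is that you assert the identity $\sigma_{n,J^\ast}(\CalF^\natural;\CalB_0^\natural)_{\bfun(\pmone)}=\sigma_{n,J^\ast}(\CalF;\CalB_0)_{\bfun_w(\pmone)}$ where the paper only proves, and only needs, the inequality \enquote{$\leq$}.
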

\begin{proof}
Let $\mu$ and $\CalB_0^\natural=(b_n)_{n\in\N_0}$ as discussed as the beginning of \Cref{sect:unbounded}.
Then $\CalB_0^\natural\subset L^2(\mu)$ is a bounded orthonormal system with $K\defeq K(\CalB_0^\natural)=4\sqrt{\uppi}$.
We also note that $\mnorm{L_j}_{\bfun_w(\pmone)}=\mnorm{w\cdot L_j}_{\bfun(\pmone)}=\mnorm{b_j}_{\bfun(\pmone)}\leq 4\sqrt{\uppi}$ and thus $L_j\in \bfun_w(\pmone)$ for all $j\in \N_0$.
Let $\CalF^\natural \defeq\setcond{w\cdot f}{f\in \CalF}$, equipped with the quasi-norm $\mnorm{g}_{\CalF^\natural}\defeq\mnorm{\frac{1}{w}\cdot g}_\CalF$.
We then have 
\begin{equation*}
\mnorm{g}_{\bfun(\pmone)}=\mnorm{\frac{1}{w}\cdot g}_{\bfun_w(\pmone)}\leq\mnorm{\frac{1}{w}\cdot g}_\CalF=\mnorm{g}_{\CalF^\natural}
\end{equation*}
for $g\in\CalF^\natural$ and hence $\CalF^\natural\hookrightarrow \bfun(\pmone)$.
Thus by applying \Cref{thm:restrictedbestnterm} to $\CalB_0^\natural$ and $\CalF^\natural$ in place of $\CalB$ and $\CalF$, we obtain for
\begin{equation*}
m=\ceil{C\cdot 16\uppi\cdot n\cdot\Log{n}^3\cdot \Log{N}}
\end{equation*}
certain sampling points $t_1,\ldots,t_m\in \pmone$ and a reconstruction operator 
\begin{equation*}
R^\natural:\CC^m\to\linspan\setcond{b_j}{j\in J^\ast}\subset \bfun(\pmone)
\end{equation*}
satisfying
\begin{equation}\label{eq:natural}
\sup_{g\in B_{\CalF^\natural}}\mnorm{g-R^\natural(g(t_1),\ldots,g(t_m))}_{L^2(\mu)}\leq\widetilde{C}\sigma_{n,J^\ast}(\CalF^\natural;\CalB_0^\natural)_{\bfun(\pmone)}.
\end{equation}
Now define
\begin{equation*}
R:\CC^m\to\linspan\setcond{L_j}{j\in J^\ast},\quad (y_1,\ldots,y_m)\mapsto\frac{1}{w}\cdot R^\natural(w(t_1)y_1,\ldots,w(t_m)y_m).
\end{equation*}
For $f\in B_\CalF$ we then have $f^\natural\defeq w\cdot f\in\CalF^\natural$ with $f^\natural\in B_{\CalF^\natural}$.
In view of \eqref{eq:natural} and \eqref{eq:isometry}, we then get
\begin{align*}
\mnorm{f-R(f(t_1),\ldots,f(t_m))}_{L^2(\lambda)}&=\mnorm{w\cdot f-w\cdot R(f(t_1),\ldots,f(t_m))}_{L^2(\mu)}\\
&=\mnorm{f^\natural- R^\natural(f^\natural(t_1),\ldots,f^\natural(t_m))}_{L^2(\mu)}\\
\leq\widetilde{C} \sigma_{n,J^\ast}(\CalF^\natural;\CalB_0^\natural)_{\bfun(\pmone)}.
\end{align*}
Therefore,
\begin{equation*}
\varrho_{\ceil{16C\uppi n\Log{n}^3\Log{N}}}(\CalF)_{L^2(\lambda)}\leq\widetilde{C}\sigma_{n,J^\ast}(\CalF^\natural;\CalB_0^\natural)_{\bfun(\pmone)}.
\end{equation*}
Finally, let $g\in B_{\CalF^\natural}$.
By definition, this means $f\defeq \frac{1}{w}\cdot g \in B_{\CalF}$.
Since $\Sigma_n\cap V_{J^\ast}$ (defined with respect to $\CalB_0=(L_j)_{j\in\N_0}$) is a closed subset of the finite-dimensional vector space $V_{J^\ast}$ (as a finite union of subspaces), there exists $h\in \Sigma_n\cap V_{J^\ast}$ with $\mnorm{f-h}_{\bfun_w(\pmone)}\leq\sigma_{n,J^\ast}(\CalF;\CalB_0)_{\bfun_w(\pmone)}$.
We have $h=\sum_{j\in J^\ast}x_jL_j$ with $\mnorm{x}_{\ell^0(N)}\leq n$.
Therefore
\begin{equation*}
h^\natural\defeq w\cdot h=\sum_{j\in J^\ast}x_jw\cdot L_j=\sum_{j\in J^\ast}x_jb_j
\end{equation*}
and thus
\begin{align*}
\sigma_{n,J^\ast}(g;\CalB_0^\natural)_{\bfun(\pmone)}&\leq \mnorm{g-h^\natural}_{\bfun(\pmone)}=\mnorm{\frac{1}{w}\cdot g-\frac{1}{w}\cdot h^\natural}_{\bfun_w(\pmone)}\\
&=\mnorm{f-h}_{\bfun(\pmone)}\leq\sigma_{n,J^\ast}(\CalF;\CalB_0)_{\bfun_w(\pmone)}.
\end{align*}
Since this holds for all $g\in B_{\CalF^\natural}$, we see
\begin{equation*}
\sigma_{n,J^\ast}(\CalF^\natural;\CalB_0^\natural)_{\bfun((-1,1))}\leq \sigma_{n,J^\ast}(\CalF;\CalB_0)_{\bfun_w(\pmone)}
\end{equation*}
and this completes the proof.
\end{proof}

Finally, we derive bounds for the sampling numbers of the Wiener-type spaces $\CalA^r_{0,p}(\pmone)$ introduced in \Cref{def:polynomial-wiener}.
Note that for every $f \in \CalA^r_{0,p}$ with $p\leq 1$ and $r> 0$ we have that $x\mapsto (1-x^2)^{1/4}f(x)$ is a continuous and bounded function on $[-1,1]$ and thus admits evaluation.
Unlike as in \Cref{thm:multivariate_algebraic} the following result only needs the restriction $r>0$ (see small paragraph at the end of the present section). 
 
\begin{corollary}\label{thm:legendre-wiener-sampling}
Let $r>0$ and $0<p\leq 1$.
There are constants $C_{r,p}, \widetilde{C}_{r,p}>0$ such that
\begin{equation*}
\varrho_{\ceil{C_{r,p}n\Log{n}^4}}(\CalA^r_{0,p}(\pmone))_{L^2(\lambda)}\leq \widetilde{C}_{r,p} n^{-(r+\frac{1}{p}-1)}
\end{equation*}
for all $n\in\N$.
\end{corollary}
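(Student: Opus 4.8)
The plan is to derive the bound from \Cref{thm:weighted-legendre}, which converts a sampling estimate in $L^2(\lambda)$ into a bound on a $J^\ast$-restricted best $n$-term approximation error measured in the weighted space $\bfun_w(\pmone)$ built from the orthonormalized Legendre polynomials $\CalB_0=(L_j)_{j\in\N_0}$. Writing $f\in\CalA^r_{0,p}(\pmone)$ in the Legendre basis as $f=\sum_{j\in\N_0}\beta_j L_j$, membership means $(\beta_j(1+j)^r)_j\in\ell^p$ with (quasi-)norm comparable to $\mnorm{f}_{\CalA^r_{0,p}(\pmone)}$. The first step is to verify the hypothesis $\CalA^r_{0,p}(\pmone)\hookrightarrow\bfun_w(\pmone)$ of \Cref{thm:weighted-legendre}: since $b_j=w\cdot L_j$ obeys $\mnorm{b_j}_{\bfun(\pmone)}\leq 4\sqrt{\uppi}$, the series $\sum_j\beta_j b_j$ converges uniformly and, using $(1+j)^r\geq1$ (as $r>0$) and $\mnorm{\cdot}_{\ell^1}\leq\mnorm{\cdot}_{\ell^p}$ for $0<p\leq1$,
\begin{align*}
\mnorm{f}_{\bfun_w(\pmone)}
&=\Bigl\|\sum_{j\in\N_0}\beta_j b_j\Bigr\|_{\bfun(\pmone)}
\leq 4\sqrt{\uppi}\sum_{j\in\N_0}\abs{\beta_j}\\
&\leq 4\sqrt{\uppi}\sum_{j\in\N_0}\abs{\beta_j}(1+j)^r
\lesssim\mnorm{f}_{\CalA^r_{0,p}(\pmone)}
\end{align*}
with an absolute implied constant; this is the quantitative form of the observation, recalled just before the statement, that $(1-x^2)^{1/4}f$ is bounded and continuous.

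The core of the argument is a two-scale choice of the search set. Fix $n\geq 2$, put $M\defeq\ceil{n^{1+(1/p-1)/r}}$, $J^\ast\defeq\setn{0,1,\ldots,M-1}$ and $N\defeq M$. For $f=\sum_j\beta_j L_j$ in the unit ball of $\CalA^r_{0,p}(\pmone)$ I would pick $J=J_1\cup J_2\subset J^\ast$ with $\card J\leq n$, where $J_1\defeq\setn{0,\ldots,\floor{n/2}-1}$ handles the low frequencies and $J_2$ collects the $\ceil{n/2}$ indices $j\in J^\ast$ with $j\geq\floor{n/2}$ carrying the largest values $\abs{\beta_j(1+j)^r}$. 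Then $g\defeq\sum_{j\in J}\beta_j L_j\in\Sigma_n\cap V_{J^\ast}$ (with respect to $\CalB_0$), $w(f-g)=\sum_{j\notin J}\beta_j b_j$ uniformly, and therefore
\begin{align*}
\sigma_{n,J^\ast}(f;\CalB_0)_{\bfun_w(\pmone)}
&\leq\Bigl\|\sum_{j\notin J}\beta_j b_j\Bigr\|_{\bfun(\pmone)}\\
&\leq 4\sqrt{\uppi}\,\Bigl(\,\sum_{\substack{j\in J^\ast\setminus J_2\\ j\geq\floor{n/2}}}\abs{\beta_j}\;+\;\sum_{j\geq M}\abs{\beta_j}\,\Bigr).
\end{align*}

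For the first sum, factor out $(1+j)^{-r}\leq(1+\floor{n/2})^{-r}\lesssim_r n^{-r}$ and apply Stechkin's lemma \cite[Lemma~7.4.1]{DungTeUl2018} to the sequence $(\beta_j(1+j)^r)_{j\in J^\ast,\,j\geq\floor{n/2}}\in\ell^p$, whose best $\ceil{n/2}$-term $\ell^1$-error is $\lesssim_p n^{1-1/p}$; this makes the first sum $\lesssim_{r,p}n^{-r}n^{1-1/p}=n^{-(r+1/p-1)}$. For the second sum, factor out $(1+j)^{-r}\leq M^{-r}$ and bound the $\ell^1$-mass of $(\beta_j(1+j)^r)_j$ by its $\ell^p$-mass $\lesssim 1$, so it is $\lesssim M^{-r}\leq n^{-(r+1/p-1)}$ by the choice of $M$. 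Hence $\sigma_{n,J^\ast}(\CalA^r_{0,p}(\pmone);\CalB_0)_{\bfun_w(\pmone)}\lesssim_{r,p}n^{-(r+1/p-1)}$ for all $n\geq2$, the case $n=1$ being absorbed into the constant.

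It remains to feed this into \Cref{thm:weighted-legendre}. Since $N=M$ is polynomial in $n$ (with fixed exponent $1+(1/p-1)/r$), one has $\Log{N}\lesssim_{r,p}\Log{n}$, so $\ceil{Cn\Log{n}^3\Log{N}}\leq\ceil{C_{r,p}n\Log{n}^4}$ for a suitable $C_{r,p}>0$; as $\varrho_m$ is non-increasing in $m$,
\begin{align*}
\varrho_{\ceil{C_{r,p}n\Log{n}^4}}(\CalA^r_{0,p}(\pmone))_{L^2(\lambda)}
&\leq\varrho_{\ceil{Cn\Log{n}^3\Log{N}}}(\CalA^r_{0,p}(\pmone))_{L^2(\lambda)}\\
&\leq\widetilde{C}\,\sigma_{n,J^\ast}(\CalA^r_{0,p}(\pmone);\CalB_0)_{\bfun_w(\pmone)}
\lesssim_{r,p}n^{-(r+1/p-1)},
\end{align*}
which is the claim. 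The main obstacle is getting the exponent sharp via the two-scale split: using only the first $\approx n$ Legendre coefficients would give decay $n^{-r}$ but forfeit the sparsity gain, whereas using only the $\approx n$ largest coefficients would give the Stechkin gain $n^{1-1/p}$ but forfeit the weight decay; the exponent $r+\tfrac1p-1$ needs both at once, applied on disjoint frequency ranges. A secondary point is to take $M$ just large enough that $N$ stays polynomial in $n$, so the logarithmic overhead in the sample count remains $\Log{n}^4$.
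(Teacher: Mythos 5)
Your argument is correct, and its outer skeleton coincides with the paper's: both proofs reduce the claim via \Cref{thm:weighted-legendre} to an estimate of $\sigma_{n,J^\ast}(\CalA^r_{0,p}(\pmone);\CalB_0)_{\bfun_w(\pmone)}$ with $J^\ast=\setn{0,\ldots,M}$ and $M\asymp n^{1+\frac{1}{pr}-\frac{1}{r}}$ (your exponent $1+(1/p-1)/r$ is the same), and both exploit that $\Log{\card{J^\ast}}\lesssim_{r,p}\Log{n}$ to land at $\Log{n}^4$ samples. Where you genuinely diverge is in how that restricted best $n$-term error is bounded. The paper first compares $\sigma_{n,J^\ast}(\,\cdot\,)_{\bfun_w(\pmone)}$ with the unrestricted quantity $\sigma_n(\CalA^r_{0,p}(\pmone);\CalB_0)_{\CalA^0_{0,1}(\pmone)}$ by inserting the partial-sum operator $T_M$, and then estimates the latter by factoring through the intermediate space $\CalA^r_{0,1}(\pmone)$ via the multiplicativity \Cref{lem:factor}: one factor is Stechkin's $\ell^p\to\ell^1$ gain $n^{-(1/p-1)}$, the other is the weight decay $n^{-r}$, yielding a $2n$-term bound. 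You instead build an explicit admissible $n$-term approximant by a two-scale index selection (all frequencies below $n/2$ plus the $\lceil n/2\rceil$ largest weighted coefficients in the mid-range) and verify directly that the discarded coefficients simultaneously enjoy the frequency decay $(1+j)^{-r}\lesssim n^{-r}$ and the Stechkin bound $n^{1-1/p}$, with the tail beyond $M$ handled by $\ell^p\hookrightarrow\ell^1$ exactly as in the paper's \eqref{eq:A_1}. Your route is more elementary and self-contained --- it needs neither \Cref{lem:factor} nor the auxiliary spaces $\CalA^r_{0,1}$, $\CalA^0_{0,1}$, and it exhibits a concrete near-optimal element of $\Sigma_n\cap V_{J^\ast}$ --- while the paper's version is more modular and reuses machinery ( \Cref{lem:factor}, Stechkin in the form of a width estimate) that is also deployed elsewhere in the text. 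Both give the same rate $n^{-(r+\frac{1}{p}-1)}$, and your verification of the hypothesis $\CalA^r_{0,p}(\pmone)\hookrightarrow\bfun_w(\pmone)$ for all $r>0$, via the uniform bound on $b_j=w\cdot L_j$, is a point the paper only gestures at.
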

\begin{proof}
Given $n\in\N$, we choose $J^\ast\defeq \setn{0,\ldots,M}$, where $M$ will be determined below.
Denote by $T_M$ the partial sum operator for the expansion $f = \sum_{j=0}^{\infty} \beta_j L_j$ defined as
\begin{equation*}
	T_M f \defeq \sum\limits_{j\in J^\ast} \beta_j L_j.
\end{equation*}
It clearly holds
\begin{align}
\mnorm{f-T_Mf}_{\bfun_w(\pmone)}&\leq \mnorm{\sum_{j\notin J^\ast}\beta_jL_j}_{\bfun_w(\pmone)} \lesssim \sum_{j\notin J^\ast}\abs{\beta_j}\label{eq:A_1}\\
&\lesssim (1+M)^{-r}\sum_{j\notin J^\ast}(1+j)^r\abs{\beta_j}.\nonumber
\end{align}
Choosing now $M\defeq\floor{n^{1+\frac{1}{pr}-\frac{1}{r}}}$ we obtain
\begin{equation*}
	\mnorm{f-T_Mf}_{\bfun_w(\pmone)}\lesssim n^{-(r+\frac{1}{p}-1)}\|f\|_{\CalA^r_{0,1}(\pmone)}.
\end{equation*}
Given $f\in B_{\CalA^r_{0,p}(\pmone)}$ we may estimate for any $n$-term sum $g$ of Legendre polynomials  
\begin{align*}
	\mnorm{f-T_Mg}_{\bfun_w(\pmone)} &\leq \mnorm{f-T_Mf}_{\bfun_w(\pmone)} + \mnorm{T_M(f-g)}_{\bfun_w(\pmone)}\\
	&\lesssim n^{-(r+\frac{1}{p}-1)} + \mnorm{f-g}_{\CalA^0_{0,1}(\pmone)}.
\end{align*}
This implies 
\begin{equation}\label{eq:J*}
	\sigma_{n,J^\ast}(\CalA^r_{0,p}(\pmone);\CalB_0)_{\bfun_w(\pmone)} \lesssim n^{-(r+\frac{1}{p}-1)}+\sigma_{n}(\CalA^r_{0,p}(\pmone);\CalB_0)_{\CalA^0_{0,1}(\pmone)}.
\end{equation}
Applying Stechkin's inequality \cite[Lemma~7.4.1]{DungTeUl2018} gives
\begin{equation*}
	\sigma_n(\CalA^r_{0,p}(\pmone);\CalB_0)_{\CalA^r_{0,1}(\pmone)}
  \lesssim n^{-(\frac{1}{p}-1)}.
\end{equation*}
In addition, we get from \eqref{eq:A_1} 
\begin{equation*}
	\sigma_n(\CalA^r_{0,1}(\pmone);\CalB_0)_{\CalA^0_{0,1}} \lesssim n^{-r}.
\end{equation*}
Combining both estimates using \Cref{lem:factor} with $n_1=n_2 = n$ together with the commutative diagram  
\begin{equation*}
\begin{tikzcd}
\CalA^r_{0,p}(\pmone) \ar[hookrightarrow]{rr}{} \ar[hookrightarrow,swap]{rd}{} && A^0_{0,1}(\pmone) \\
& \CalA^r_{0,1}(\pmone) \ar[hookrightarrow,swap]{ru}{} &              
\end{tikzcd}
\end{equation*}
yields 
\begin{equation}\label{eq:sigma}
	\sigma_{2n}(\CalA^r_{0,p}(\pmone);\CalB_0)_{\CalA^0_{0,1}(\pmone)}
  \lesssim n^{-(r +\frac{1}{p}-1)}.
\end{equation}
Because of
\begin{equation*}
\log(\card{J^\ast})=\log\lr{1+\floor{n^{1+\frac{1}{pr}-\frac{1}{r}}}}\lesssim \Log{n}
\end{equation*}
the claim now follows from \Cref{thm:weighted-legendre}, \eqref{eq:J*}, and \eqref{eq:sigma}.
\end{proof}
Note that for \Cref{thm:legendre-wiener-sampling} we only need $r>0$ whereas for \Cref{thm:multivariate_algebraic} the stronger constraint $r>\frac{1}{2}$ is required.

\begin{proof}[Proof of \Cref{thm:multivariate_algebraic}]
It remains to show the assertion in the Legendre case $\alpha=0$.
As $w$ is bounded, we have $\CalF\hookrightarrow\bfun(\pmone)\hookrightarrow \bfun_w(\pmone)$.
We use \Cref{thm:weighted-legendre} in conjuction with \Cref{lem:conn} with $J=\setn{0,\ldots,M}$ and $J^\ast=\setn{0,\ldots,3M}$ for the quasi-projections described in \Cref{chap:algebraic-polynomials}.
We obtain $\kappa=1$ and $\tau\leq 4$, whence
\begin{align*}
&\varrho_{\ceil{Cn\Log{n}^3\log(4M)}}(\CalF)_{L^2(\lambda)}\\
&\leq \varrho_{\ceil{Cn\Log{n}^3\Log{3M}}}(\CalF)_{L^2(\lambda)}\\
&\leq \widetilde{C}\sigma_{n,J^\ast}(\CalF;\CalB_0)_{\bfun_w(\pmone)}\\
&=\widetilde{C}\lr{(1+\tau)E_{\setn{0,\ldots,M}}(f;\CalB_0)_{\bfun_w(\pmone)}+\tau\sigma_n(f;\CalB_0)_{\bfun_w(\pmone)}}\\
&\leq 5\widetilde{C}\lr{E_{\setn{0,\ldots,M}}(f;\CalB_0)_{\bfun_w(\pmone)}+\sigma_n(f;\CalB_0)_{\bfun_w(\pmone)}}\\
&\leq 5\widetilde{C}c\lr{E_{\setn{0,\ldots,M}}(f;\CalB_0)_{\bfun(\pmone)}+\sigma_n(f;\CalB_0)_{\bfun(\pmone)}}
\end{align*}
with the constants $C,\widetilde{C}>0$ from \Cref{thm:weighted-legendre}.
Observing $\log(4M)\lesssim\log(M)$ takes account for the index on the left-hand side.
In the last step, we utilized that the embedding $\bfun(\pmone)\hookrightarrow \bfun_w(\pmone)$ implies the existence of a constant $c>0$ such that $\mnorm{f}_{\bfun_w(\pmone)}\leq c\mnorm{f}_{\bfun(\pmone)}$ for all $f\in \bfun(\pmone)$.
\end{proof}

\appendix

\section{Sobolev spaces with mixed smoothness}%
\label{chap:sobolev-appendix}

We introduce Sobolev spaces with mixed smoothness.
These spaces have a relevant history in the former Soviet Union, see for instance \cite[Chapter~3]{DungTeUl2018} and the references given there.
They also play a significant role in discrepancy theory and numerical integration, see \cite[Chapter~5]{DungTeUl2018}.
Finally, these spaces turn out to be useful for the analysis of eigenfunctions of certain quantum mechanical operators, see \cite{Ys10}.
Define for $x \in \T$ and $r > 0$ the univariate Bernoulli kernel
\begin{equation*}
  F_r(x)
  \defeq  1 + 2\sum_{k=1}^\infty k^{-r}\cos (2\uppi kx) =
  \sum_{k\in\Z}\max\setn{1,\abs{k}}^{-r}\ee^{-2\uppi\ii kx}
\end{equation*}
and define the multivariate Bernoulli kernels as the corresponding tensor products
\begin{equation*}\label{Bernoulli}
  F_r(x)
  \defeq \prod_{j=1}^dF_r(x_j),
  \quad x=(x_1,\ldots,x_d)\in \T^d.
\end{equation*}

\begin{definition} \label{def2Sob}
Let $r > 0$ and $1 < p < \infty$.
Then $S^r_pW(\T^d)$ is defined as the normed space of all elements $f\in L^p(\T^d)$ which can be written as
\begin{equation*}
  f
  = F_r\ast \varphi
  \defeq \int_{\T^d}F_r(\bullet-y)\varphi(y)\dd y
\end{equation*}
for some $\varphi \in L^p(\T^d)$, equipped with the norm
$\mnorm{f}_{S^r_pW(\T^d)}\defeq \mnorm{\varphi}_{L^p(\T^d)}$.
\end{definition}
In other words, a function $f\in L^p(\T^d)$ is an element of $S^r_pW(\T^d)$ if and only if
\begin{equation*}
g_f\defeq \sum_{k\in\Z^d}\prod_{j=1}^d\max\setn{1,\abs{k_j}}\widehat{f}(k)\ee^{2\uppi\ii \skpr{k}{\bullet}}\in L^p(\T^d),
\end{equation*}
and then $\mnorm{f}_{S^r_pW(\T^d)}\defeq \mnorm{g_f}_{L^p(\T^d)}$.

The spaces $S^r_pW(\T^d)$ are Banach spaces for all $r > 0$ and $1 < p < \infty$.
They are Hilbert spaces exactly for $p=2$.
In that case, it is well-known that $S^r_2W(\T^d) = H^r_{\mathrm{mix}}(\T^d)$ with equivalent norms.
This fact is easily implied by the Littlewood--Paley characterization of the spaces $S^r_pW(\T^d)$.
Here we use building blocks with frequency support in dyadic rectangles.
For $s\in \N_0^d$, we set 
\begin{equation*}\label{defdelta}
  \delta_{s}(f,x)
  \defeq \sum_{k \in \varrho(s)} \widehat{f}(k)\ee^{2\uppi \ii \skpr{k}{x}}
\end{equation*}
using 
\begin{equation*}
  \varrho(s)
  \defeq  \setcond{ k\in \Z^d}
                  {\floor{2^{s_j-1}} \leq \abs{k_j} < 2^{s_j} \fall j=1,\ldots,d }.
\end{equation*}
\begin{lemma}\label{FourierW}
  If $1 < p < \infty$ and $r > 0$ then the norm $\mnorm{f}_{S^r_pW(\T^d)}$
  is equivalent to the Littlewood--Paley type norm
  \begin{equation*}
    \mnorm{f}_{S^r_pW(\T^d)}
    \asymp \mnorm{
             \lr{\sum_{s \in \N_0^d}
             2^{2r \mnorm{s}_{\ell^1(\N_0^d)}}
             \abs{\delta_s(f,\bullet)}^2}^{\frac{1}{2}}
           }_{L^p(\T^d)}.
  \end{equation*}
\end{lemma}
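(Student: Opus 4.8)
The plan is to reduce the claimed norm equivalence in \Cref{FourierW} to three classical facts of Fourier analysis on the torus, all available in standard references such as \cite{GrafakosClassicalFourierThirdEdition}: (a) the Littlewood--Paley theorem for the mixed dyadic block decomposition $(\delta_s)_{s\in\N_0^d}$, namely $\mnorm{h}_{L^p(\T^d)}\asymp\mnorm{\lr{\sum_{s\in\N_0^d}\abs{\delta_s(h,\bullet)}^2}^{1/2}}_{L^p(\T^d)}$ for $1<p<\infty$; (b) the Marcinkiewicz multiplier theorem on $\Z^d$; and (c) Khintchine's inequality, which identifies the $\ell^2(\N_0^d)$-square function of a family $(a_s)_{s}$ of trigonometric polynomials with $\lr{\mathbb{E}_\varepsilon\mnorm{\sum_s\varepsilon_sa_s}_{L^p(\T^d)}^p}^{1/p}$, the $L^p$-average over Rademacher signs $\varepsilon=(\varepsilon_s)_{s}$. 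Since (a) is itself usually derived from (b), the whole argument really rests on (b) and (c); alternatively, this Littlewood--Paley characterization of $S^r_pW(\T^d)$ is recorded, with essentially this proof, in \cite[Chapter~3]{DungTeUl2018}.

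First I would record the relevant facts about the Bernoulli multiplier. Writing $m(k)\defeq\prod_{j=1}^d\max\setn{1,\abs{k_j}}^r$, so that $f\mapsto g_f$ is the Fourier multiplier $m(D)$ and $\mnorm{f}_{S^r_pW(\T^d)}=\mnorm{m(D)f}_{L^p(\T^d)}$, one checks directly from the definition of $\varrho(s)$ that $2^{-rd}\,2^{r\mnorm{s}_{\ell^1(\N_0^d)}}\le m(k)\le 2^{r\mnorm{s}_{\ell^1(\N_0^d)}}$ for every $s\in\N_0^d$ and every $k\in\varrho(s)$. Hence the correction factor $\mu(k)\defeq m(k)\,2^{-r\mnorm{s}_{\ell^1(\N_0^d)}}$ (with $s=s(k)$ the block index of $k$) takes values in $[2^{-rd},1]$, and on each dyadic block, and in each coordinate separately, the maps $k_j\mapsto\mu(k)^{\pm1}$ are monotone with total variation bounded by a constant depending only on $r$ and $d$. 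Since $\mu$ is of product form across the coordinates, it follows that the symbol $k\mapsto\mu(k)$, as well as, for any choice of signs $\varepsilon=(\varepsilon_s)_{s}$, the twisted symbol $k\mapsto\varepsilon_{s(k)}\mu(k)^{-1}$, are bounded with Marcinkiewicz norm bounded uniformly in $\varepsilon$ (the sign $\varepsilon_{s(k)}$ is constant on each dyadic block, so it does not affect the variations over that block); by (b) the associated Fourier multiplier operators are bounded on $L^p(\T^d)$ with norm $\le C(p,r,d)$ independent of $\varepsilon$.

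The two inequalities of the norm equivalence then follow by running the same chain of estimates in the two directions. For the lower bound $\mnorm{f}_{S^r_pW(\T^d)}\gtrsim\mnorm{\lr{\sum_s2^{2r\mnorm{s}_{\ell^1(\N_0^d)}}\abs{\delta_s(f,\bullet)}^2}^{1/2}}_{L^p(\T^d)}$: by (c) the right-hand side is $\asymp\lr{\mathbb{E}_\varepsilon\mnorm{\sum_s\varepsilon_s2^{r\mnorm{s}_{\ell^1(\N_0^d)}}\delta_s(f,\bullet)}_{L^p(\T^d)}^p}^{1/p}$, and for each fixed $\varepsilon$ the inner function equals the twisted multiplier $\varepsilon_{s(\bullet)}\mu(\bullet)^{-1}$ applied to $m(D)f=g_f$, hence is $\lesssim\mnorm{g_f}_{L^p(\T^d)}=\mnorm{f}_{S^r_pW(\T^d)}$ uniformly in $\varepsilon$. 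For the reverse inequality I would start from $\mnorm{f}_{S^r_pW(\T^d)}=\mnorm{m(D)f}_{L^p(\T^d)}\asymp\mnorm{\lr{\sum_s\abs{\delta_s(m(D)f,\bullet)}^2}^{1/2}}_{L^p(\T^d)}$ by (a), use that $\delta_s(m(D)f,\bullet)=2^{r\mnorm{s}_{\ell^1(\N_0^d)}}\mu(D)\delta_s(f,\bullet)$ on the block $\varrho(s)$, and apply (c) together with the (untwisted, hence automatic) $L^p$-boundedness of $\mu(D)$, now applied to $\sum_s\varepsilon_s2^{r\mnorm{s}_{\ell^1(\N_0^d)}}\delta_s(f,\bullet)$, to bound the square function $\mnorm{(\sum_s\abs{\delta_s(m(D)f,\bullet)}^2)^{1/2}}_{L^p}$ by $\mnorm{\lr{\sum_s2^{2r\mnorm{s}_{\ell^1(\N_0^d)}}\abs{\delta_s(f,\bullet)}^2}^{1/2}}_{L^p(\T^d)}$.

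The step I expect to cost the most care — though it is entirely routine — is the uniform Marcinkiewicz bound for the correction symbols $\mu(k)$ and $\varepsilon_{s(k)}\mu(k)^{-1}$: one has to verify that replacing $m$ by the block-constant weight $2^{r\mnorm{s}_{\ell^1(\N_0^d)}}$, up to the harmless factor $\mu$, and multiplying by arbitrary signs that are constant on each dyadic block, keeps the one-dimensional variations over dyadic blocks, and their iterated (mixed) analogues on $\Z^d$, bounded uniformly in $s$ and in $\varepsilon$. Everything else is a bookkeeping combination of (a)--(c); in particular no estimate specific to mixed smoothness is needed, and the case $p=2$ — where the argument degenerates to Plancherel together with the two-sided bound on $m(k)$, and one recovers $S^r_2W(\T^d)=H^r_{\mathrm{mix}}(\T^d)$ — requires no multiplier theory at all.
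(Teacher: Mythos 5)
Your proposal is correct. Note that the paper itself gives no proof of \Cref{FourierW}: it is stated in the appendix as a classical fact, with the surrounding text deferring to the literature (\cite[Chapter~3]{DungTeUl2018} and the references therein, going back to Lizorkin--Nikolskii and Schmeisser--Triebel). The argument you give is precisely the standard one behind those references: the two-sided bound $2^{-rd}2^{r\mnorm{s}_{\ell^1}}\le\prod_j\max\setn{1,\abs{k_j}}^r\le 2^{r\mnorm{s}_{\ell^1}}$ on $\varrho(s)$, the uniform Marcinkiewicz bound for the block-normalized correction symbol and its sign-twisted variants, and the Khintchine/randomization identification of the weighted square function with an averaged $L^p$-norm. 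The one point you flag as needing care is indeed the only delicate one, and your treatment of it is right: the correction symbol is a tensor product of one-dimensional symbols that are bounded between $2^{-r}$ and $1$ and monotone on each dyadic block, and multiplying by signs that are constant on each $d$-dimensional block $\varrho(s)$ changes neither the sup norm nor any of the iterated variations entering the mixed Marcinkiewicz condition, so the multiplier norms are uniform in $\varepsilon$. Your closing remark about $p=2$ (Plancherel alone suffices, recovering $S^r_2W(\T^d)=H^r_{\mathrm{mix}}(\T^d)$) matches what the paper asserts immediately before the lemma. In short: the proof is complete and is the standard one; the paper simply does not spell it out.
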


In order to have access to function values we use the restriction $r > \frac{1}{p}$ which implies that every equivalence class $f \in S^r_pW(\T^d)$ contains a continuous periodic function, see \cite[Lemma~3.4.1(iii) and~3.4.3]{DungTeUl2018}.
Moreover, the embedding 
\begin{equation*}
	S^r_pW(\T^d) \hookrightarrow \bfun(\T^d)
\end{equation*}
is then compact.

\section{Some technical lemmas}
\label{sect:lech_lemma}

\begin{lemma}\label{lem:gelfand-sampling}
Let $\CalF$ be Banach space of functions $\Omega \to\CC$ which is continuously embedded into $\bfun(\Omega)\hookrightarrow X$ where $X$ is another Banach space.
Then $c_n(\CalF)_X\leq\varrho_n(\CalF)_X$ for all $n\in\N$.
\end{lemma}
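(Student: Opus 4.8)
The plan is to run the standard \enquote{fooling subspace} argument that converts a worst-case sampling error into a codimension bound. First I would fix an arbitrary choice of sampling nodes $t_1,\ldots,t_n\in\Omega$ together with an arbitrary (possibly nonlinear) reconstruction map $R:\CC^n\to X$, and introduce the information map
$\Lambda\colon\CalF\to\CC^n$, $\Lambda f\defeq(f(t_1),\ldots,f(t_n))$. Since $\CalF\hookrightarrow\bfun(\Omega)$ is continuous, each coordinate functional satisfies $\abs{f(t_j)}\le\mnorm{f}_{\bfun(\Omega)}\le c\mnorm{f}_{\CalF}$, so $\Lambda$ is a bounded linear map. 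Hence $L\defeq\ker\Lambda$ is a closed subspace of $\CalF$, and $\CalF/L$ embeds into $\CC^n$, so $\codim L\le n$; in particular $L$ is an admissible competitor in the infimum defining $c_n(\CalF)_X$.

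Next I would exploit that $L$ is a linear subspace, hence symmetric: if $f\in L\cap B_\CalF$ then also $-f\in L\cap B_\CalF$, and $\Lambda f=\Lambda(-f)=0$. Therefore the worst-case error of the pair $(t_1,\ldots,t_n;R)$ is bounded below by both $\mnorm{f-R(0)}_X$ and $\mnorm{-f-R(0)}_X=\mnorm{f+R(0)}_X$. Adding these bounds and using the triangle inequality in $X$,
\[
2\mnorm{f}_X\le\mnorm{f-R(0)}_X+\mnorm{f+R(0)}_X\le 2\sup_{g\in B_\CalF}\mnorm{g-R(g(t_1),\ldots,g(t_n))}_X,
\]
so $\mnorm{f}_X\le\sup_{g\in B_\CalF}\mnorm{g-R(g(t_1),\ldots,g(t_n))}_X$ for every $f\in L\cap B_\CalF$. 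Taking the supremum over such $f$ and using that $L$ is eligible in the definition of $c_n$ gives
$c_n(\CalF)_X\le\sup_{f\in L\cap B_\CalF}\mnorm{f}_X\le\sup_{g\in B_\CalF}\mnorm{g-R(g(t_1),\ldots,g(t_n))}_X$.

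Finally I would take the infimum of the right-hand side over all reconstruction maps $R:\CC^n\to X$ and over all node configurations $t_1,\ldots,t_n\in\Omega$, which is exactly $\varrho_n(\CalF)_X$, yielding $c_n(\CalF)_X\le\varrho_n(\CalF)_X$. There is no substantial obstacle here; the only points needing (minor) care are the continuity and the codimension count for $L=\ker\Lambda$, which rely precisely on the assumed embedding $\CalF\hookrightarrow\bfun(\Omega)$ making point evaluations bounded functionals, and the observation that it is the vector-space structure of $L$ — the availability of both $f$ and $-f$ — that turns the (one-sided) worst-case recovery error into a genuine upper bound for $\mnorm{f}_X$ uniformly over $L\cap B_\CalF$.
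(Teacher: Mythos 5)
Your argument is correct and coincides with the paper's own proof: both take $L=\setcond{f\in\CalF}{f(t_1)=\cdots=f(t_n)=0}$ as the fooling subspace, use the embedding $\CalF\hookrightarrow\bfun(\Omega)$ to get closedness and $\codim L\le n$, and exploit the symmetry $f\mapsto-f$ together with the triangle inequality to bound $2\mnorm{f}_X$ by twice the worst-case recovery error. The only cosmetic difference is that the paper fixes an $\eps$-near-optimal pair $(t_1,\ldots,t_n;R)$ at the outset and lets $\eps\downarrow 0$, whereas you take the infimum over all pairs at the end; these are equivalent.
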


\begin{proof}
Let $\eps>0$.
There exist $t_1,\ldots,t_n\in\Omega$, and $R:\CC^n\to X$ such that
\begin{equation*}
\sup_{f\in B_\CalF}\mnorm{f-R(f(t_1),\ldots,f(t_n))}\leq \varrho_n(\CalF)_X+\eps
\end{equation*}
Set $L\defeq\setcond{f\in\CalF}{f(t_1)=\ldots =f(t_n)=0}$.
Then $L$ is a closed subspace of $\CalF$ (this uses that $\CalF\hookrightarrow \bfun(\Omega)$) and $\codim(L)\leq n$.
Now let $f\in L$ with $\mnorm{f}_\CalF\leq 1$.
Then
\begin{align*}
2\mnorm{f}_X&=\mnorm{f-(-f)}_X\\
&=\mnorm{f-R(f(t_1),\ldots,f(t_n))-((-f)-R(-f(t_1),\ldots,-f(t_n)))}_X\\
&\leq\mnorm{f-R(f(t_1),\ldots,f(t_n))}_X+\mnorm{(-f)-R(-f(t_1),\ldots,-f(t_n))}_X\\
&\leq 2(\varrho_n(\CalF)_X+\eps).
\end{align*}
Passing $\eps\downarrow 0$ and taking the supremum over $f\in L\cap B_\CalF$ yields the assertion.
\end{proof}

\begin{lemma}\label{thm:approximation-numbers-linear-widths}
Let $\CalF$ be a quasi-normed space, let $H$ a Hilbert space, and let $T:\CalF\to H$ be bounded and linear.
Then
\begin{equation*}
	a_n(T:\CalF \to H) = \lambda_n(T(B_\CalF))_H
\end{equation*}
for all $n\in\N$.
\end{lemma}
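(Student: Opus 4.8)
The plan is to establish the two inequalities $a_n(T:\CalF\to H)\le\lambda_n(T(B_\CalF))_H$ and $\lambda_n(T(B_\CalF))_H\le a_n(T:\CalF\to H)$ separately, in each case converting an arbitrary admissible competitor for one of the two infima into an admissible competitor for the other one with no larger objective value. Unravelling the definition \eqref{eq:widths} with $X=H$ and $K=T(B_\CalF)$, note first that
\begin{equation*}
\lambda_n(T(B_\CalF))_H=\inf_{\substack{B:H\to H\\\rank B\le n}}\ \sup_{f\in B_\CalF}\mnorm{Tf-BTf}_H,
\end{equation*}
since every element of $T(B_\CalF)$ is of the form $Tf$ with $f\in B_\CalF$.

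For the inequality $a_n(T)\le\lambda_n(T(B_\CalF))_H$, I would take an arbitrary bounded linear $B:H\to H$ with $\rank B\le n$ and set $A\defeq B\circ T:\CalF\to H$. Then $A$ is bounded and linear with $\rank A\le\rank B\le n$, and $Tf-Af=Tf-BTf$ for every $f\in\CalF$, so $\sup_{f\in B_\CalF}\mnorm{Tf-Af}_H=\sup_{f\in B_\CalF}\mnorm{Tf-BTf}_H$. Hence $a_n(T)\le\sup_{f\in B_\CalF}\mnorm{Tf-BTf}_H$, and taking the infimum over $B$ gives the claim.

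For the reverse inequality, which is the slightly less obvious direction, I would use the Hilbert space geometry. Let $A:\CalF\to H$ be an arbitrary bounded linear operator with $\rank A\le n$, and let $W\defeq A(\CalF)\subset H$, a linear subspace with $\dim W\le n$; being finite-dimensional, $W$ is closed, so the orthogonal projection $P_W:H\to H$ onto $W$ exists, is linear and bounded, and has $\rank P_W=\dim W\le n$, making it admissible in the infimum defining $\lambda_n$. The key point is the best-approximation property of orthogonal projections: for each $f\in B_\CalF$ we have $Af\in W$, hence
\begin{equation*}
\mnorm{Tf-P_WTf}_H=\inf_{w\in W}\mnorm{Tf-w}_H\le\mnorm{Tf-Af}_H.
\end{equation*}
Taking the supremum over $f\in B_\CalF$ yields $\lambda_n(T(B_\CalF))_H\le\sup_{f\in B_\CalF}\mnorm{Tf-Af}_H$, and then the infimum over all admissible $A$ gives $\lambda_n(T(B_\CalF))_H\le a_n(T)$, completing the proof.

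I do not expect a serious obstacle here. No attainment of either infimum is needed, since both inequalities are deduced from arbitrary competitors; the only structural ingredient is that $H$ is a Hilbert space, used solely to guarantee that the orthogonal projection onto a finite-dimensional subspace exists, is bounded and linear, and realizes the distance to that subspace. The one subtlety to handle carefully is that an admissible $A$ in the definition of $a_n(T)$ is defined on $\CalF$, not on $H$, so it cannot simply be reused as a self-map of $H$; replacing it by the orthogonal projection onto its range is exactly what repairs this, and it can only decrease the error.
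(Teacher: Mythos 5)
Your proof is correct and follows essentially the same route as the paper: for $\lambda_n\le a_n$ both arguments replace a near-optimal finite-rank $A:\CalF\to H$ by the orthogonal projection onto its range, and for $a_n\le\lambda_n$ both reduce to composing with $T$. The only (harmless) difference is that in the latter direction you use $B\circ T$ directly, while the paper inserts an additional orthogonal projection onto $R(A)$ that is not actually needed.
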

\begin{proof}
First, we show \enquote{$\geq$}. Let $\eps >0$.
Choose a bounded linear operator $A:\CalF \to H$ with $\rank A \leq n$ such that 
\begin{equation*}
	\sup_{f\in B_\CalF}
               \mnorm{Tf-Af}_H \leq a_n(T:\CalF \to H)+\eps.
\end{equation*}
Consider the orthogonal projection $P_L:H\to H$ onto $L = R(A)$, i.e., the range of $A$.
Then it holds for any $f \in B_\CalF$
\begin{equation*}
	\mnorm{Tf-P_L(Tf)}_H \leq \mnorm{Tf-Af}_H\leq a_n(T:\CalF \to H)+\eps
\end{equation*}
and hence $\lambda_n(T(B_\CalF))_H \leq a_n(T:\CalF \to H)+\eps$. 

Second, we show \enquote{$\leq$}.
Let $\eps>0$.
Choose a bounded linear operator $A:H\to H$ with $\rank A\leq n$ such that 
\begin{equation*}
	\sup_{f\in B_\CalF}\mnorm{Tf-A(Tf)}_H \leq \lambda_n(T(B_\CalF))_H+\eps.
\end{equation*}
Then we again put $L = R(A)$ and consider the orthogonal projection $P_L$ onto this subspace.
Then clearly 
\begin{equation*}
	\mnorm{Tf-P_L(Tf)}_H \leq \mnorm{Tf-A(Tf)}_H
\end{equation*}
for all $f\in\CalF$ and hence 
\begin{equation*}
	\sup_{f\in B_\CalF}\mnorm{Tf-(P_L \circ T)(f)}_H \leq \lambda_n(T(B_\CalF))_H+\eps,
\end{equation*}
which implies $a_n(T:\CalF \to H) \leq \lambda_n(T(B_\CalF))_H+\eps$.
\end{proof}

\begin{lemma}\label{lem:technical}
Let $(a_n)_{n\in\N}$ denote a non-increasing sequence of non-negative real numbers satisfiying
\begin{equation*}
	a_{\ceil{c_1 n\log(n)^{\alpha}}} \leq c_2n^{-r}\log(n)^{\beta},\quad n\in \N,\, n\geq 3,
\end{equation*}
for some $c_2,r>0$, $\alpha,\beta \geq 0$ and $c_1\geq 1$.
Then it holds for $m \geq \ceil{ 3c_1\log(3)^{\alpha}}$ that 
\begin{equation*}
	a_m \leq c_2(4c_12^{\alpha})^rm^{-r}\log(m)^{\beta+\alpha\cdot r}.
\end{equation*}
\end{lemma}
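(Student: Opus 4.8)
The plan is to reduce the claimed estimate to a single application of the assumed inequality, applied with a carefully chosen index $n$. Given $m \geq \ceil{3 c_1 \log(3)^\alpha}$, I would take $n$ to be the \emph{largest} integer $n \geq 3$ for which $\ceil{c_1 n \log(n)^\alpha} \leq m$. Such an $n$ exists: the value $n = 3$ satisfies the constraint precisely because of the lower bound imposed on $m$, and since $n \mapsto \ceil{c_1 n \log(n)^\alpha}$ is non-decreasing and tends to infinity, the set of admissible $n$ is a non-empty finite subset of $\N$. For this $n$, monotonicity of the sequence together with the assumed inequality (legitimately applied, since $n \geq 3$) yields
\begin{equation*}
  a_m \leq a_{\ceil{c_1 n \log(n)^\alpha}} \leq c_2 \, n^{-r} \log(n)^\beta .
\end{equation*}
So everything comes down to a good lower bound for $n$ in terms of $m$.

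First I would record the trivial two-sided control $3 \leq n \leq m$. The left inequality is by construction; for the right one, note that $c_1 \geq 1$ and $\log(n) \geq \log(3) \geq 1$ (hence $\log(n)^\alpha \geq 1$), so $n \leq c_1 n \log(n)^\alpha \leq \ceil{c_1 n \log(n)^\alpha} \leq m$. In particular $\log(n) \leq \log(m)$, which already disposes of the factor $\log(n)^\beta \leq \log(m)^\beta$.

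Next, maximality of $n$ means that $n+1$ violates the constraint; since $n+1 \geq 4 \geq 3$, this forces $\ceil{c_1 (n+1) \log(n+1)^\alpha} \geq m+1$, and therefore $m < c_1 (n+1) \log(n+1)^\alpha$. Using $n+1 \leq 2n$ and $\log(n+1) \leq \log(n^2) = 2\log(n)$ (valid for $n \geq 2$), followed once more by $\log(n) \leq \log(m)$, this gives
\begin{equation*}
  m < 2^{\alpha+1} c_1 \, n \, \log(n)^\alpha \leq 2^{\alpha+1} c_1 \, n \, \log(m)^\alpha ,
\end{equation*}
so that $n^{-r} \leq (2^{\alpha+1} c_1)^r \, m^{-r} \log(m)^{\alpha r}$. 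Substituting this and $\log(n)^\beta \leq \log(m)^\beta$ into the bound for $a_m$ yields
\begin{equation*}
  a_m \leq c_2 (2^{\alpha+1} c_1)^r \, m^{-r} \log(m)^{\beta + \alpha r} \leq c_2 (4 c_1 2^\alpha)^r \, m^{-r} \log(m)^{\beta + \alpha r},
\end{equation*}
the last step because $2^{\alpha+1} c_1 \leq 4 \cdot 2^\alpha c_1$ and $r > 0$.

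The argument is essentially bookkeeping, so there is no serious obstacle; the one place that requires a little care is the interaction of the two ceilings with the replacement of $\log(n)$ by $\log(m)$. One must establish $n \leq m$ \emph{before} swapping logarithms, and it is exactly for this (and to make $\log(n)^\alpha \geq 1$) that the hypotheses $c_1 \geq 1$ and the restriction to $n \geq 3$ are needed.
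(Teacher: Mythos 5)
Your proof is correct, and it follows essentially the same route as the paper's: locate the index $n$ with $\ceil{c_1 n\log(n)^\alpha}\leq m<\ceil{c_1(n+1)\log(n+1)^\alpha}$, use monotonicity of $(a_k)_k$ together with the hypothesis at that $n$, and convert between $n$ and $m$ via $n\leq m$ and $m<2^{\alpha+1}c_1 n\log(n)^\alpha$. Your bookkeeping even yields the marginally better constant $(2^{\alpha+1}c_1)^r$ in place of $(4c_1 2^\alpha)^r$, which of course still implies the stated bound.
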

 \begin{proof}
 For $n\in \N$, $n\geq 3$ we define 
\begin{equation*}
	m_n\defeq \ceil{ c_1n \log(n)^\alpha}. 
\end{equation*}
 Then we have $m_n \geq \ceil{ 3c_1\log(3)^{\alpha}}, \log(m_n)  \geq \log(n)$ and $m_n \leq 2c_1n \log(n)^\alpha$.
 This implies
 \begin{equation*}
   \begin{split}
	a_{m_n} &\leq c_22^rc_1^rm_n^{-r}\log(n)^{\alpha\cdot r+\beta}\\
	&\leq c_22^rc_1^rm_n^{-r}\log(m_n)^{\alpha\cdot r+\beta}.
   \end{split}	 
 \end{equation*}
 Now, for $m\geq \ceil{ 3c_1\log(3)^{\alpha}}$ there exists a number $n\in \N$, $n\geq 3$ such that $m_n\leq m < m_{n+1}$.
 From $m_{n+1}= \ceil{ c_1(n+1)\log(n+1)^{\alpha}}$, we obtain 
\begin{equation*}
m \leq 2c_12^{\alpha}n\log(n)^{\alpha}\leq 2^{\alpha+1}m_n
\end{equation*}
 This finally yields  
\begin{equation*}
	a_m \leq a_{m_n}\leq c_2(2c_12^{\alpha+1})^rm^{-r}\log(m)^{\alpha\cdot r + \beta}
\end{equation*}
 for $m$ in the given range. 
\end{proof}

\begin{remark}
The argument in the previous lemma produces additional constants $m\geq \ceil{3c_1\log(3)^{\alpha}}$ and 
\begin{equation*}
	a_m \leq C_{\alpha,r,c_1}c_2m^{-r}\log(m)^{\alpha\cdot r+\beta},
\end{equation*}
which only depend on $c_1,\alpha$ and $r$, but not on $\beta$.
\end{remark}

\parindent 0pt
\textbf{Acknowledgement.} The authors thank Dinh D\~ung, David Krieg, Erich Novak, Winfried Sickel, Serhii Stasyuk, Vladimir Temlyakov, and Mario Ullrich for helpful comments. 
FV and TJ acknowledge support by the Hightech Agenda Bavaria.
This research is co-financed with tax revenues on the basis of the budget passed by the Saxon state parliament, Germany (grant number SAB 100403339; project NutriCon).

\footnotesize
\bibliographystyle{plain}
\bibliography{references.bib}
\end{document}